\theoremstyle{plain}
   \newtheorem{theorem}{Theorem}[section]
   \newtheorem{proposition}[theorem]{Proposition}
   \newtheorem{lemma}[theorem]{Lemma}
   \newtheorem{corollary}[theorem]{Corollary}
\theoremstyle{definition}
   \newtheorem{definition}{Definition}[section]
   \newtheorem{example}{Example}[section] 
\theoremstyle{remark}
 \newtheorem{remark}{Remark}[section]
\newcommand{\xx}{\mathbf{x}}
\newcommand{\yy}{\mathbf{y}}
\newcommand{\zz}{\mathbf{z}}
\newcommand{\LL}{\mathcal{L}}
\newcommand{\GG}{\mathcal{G}}
\newcommand{\MM}{\mathcal{M}}
\newcommand{\FF}{\mathcal{F}}
\newcommand{\A}{\mathcal{A}}
\newcommand{\CC}{\mathcal{C}}
\newcommand{\RR}{\mathcal{R}}
\newcommand{\TT}{\mathcal{T}}
\newcommand{\I}{\mathcal{I}}
\newcommand{\R}{\mathbb{R}}
\newcommand{\C}{\mathbb{C}}
\newcommand{\prob}{\mathbb{P}}
\newcommand{\toric}{\mathsf{toric}}
\DeclareMathOperator{\loc}{local}
\DeclareMathOperator{\glo}{global}
\DeclareMathOperator{\pred}{pred}
\newcommand\independent{\protect\mathpalette{\protect\independenT}{\perp}}
\def\independenT#1#2{\mathrel{\rlap{$#1#2$}\mkern2mu{#1#2}}}
\def\newop#1{\expandafter\def\csname #1\endcsname{\mathop{\rm
#1}\nolimits}}
\keywords{causal inference, directed acyclic graph, toric ideal, algebraic statistics, interventions, probability trees}
\subjclass[2020]{62R01, 62A09, 13P10, 13P25}
\begin{document}
\title[Algebraic Geometry of Discrete Interventional Models]{Algebraic Geometry of Discrete \\ Interventional Models}

\date{\today}

\author{Eliana Duarte}
\address{Universidade do Porto, Rua do Campo Alegre 687, 4169-007 Porto, Portugal}
\email{eliana.gelvez@fc.up.pt}

\author{Liam Solus}
\address{Institutionen f\"or Matematik, KTH, SE-100 44 Stockholm, Sweden}
\email{solus@kth.se}

\begin{abstract}

We investigate the algebra and geometry of general interventions in discrete DAG models. 
To this end, we introduce a theory for modeling soft interventions in the more general family of staged tree models 
and develop the formalism to study these models as parametrized subvarieties of
a product of probability simplices. We then consider the problem of finding their defining equations, and we derive sufficient combinatorial  conditions on an interventional staged tree model to have a defining ideal that is toric.
We apply these results to the class of discrete 
interventional DAG models and establish sufficient graphical  conditions
to determine when these models are toric varieties.

\end{abstract}

%

%

\maketitle
\thispagestyle{empty}

\section{Introduction}
\label{sec: introduction}

A \emph{graphical model} is a statistical model represented by a graph whose vertices correspond to random variables and whose
edges encode conditional independence relations among these variables.
Graphical models are used in a wide variety of fields, including computational biology, genomics, sociology, economics, epidemiology, and artificial intelligence \cite{KF09,P09,SGS00}. 
In these fields it is common to use causal discovery algorithms (e.g. \cite{C02,SWU19,SGS00}) to learn a \emph{directed acyclic 
graph} (DAG) that informs the practitioner of cause-effect relations between observed variables based on the available data. 
Namely, if $i\rightarrow j$ is an edge of the DAG, we would like to interpret this as saying variable $i$ is a (direct) cause of variable $j$.
Without imposing further assumptions on the data-generating distribution, these methods are only able to learn a DAG up to its \emph{Markov equivalence class} (MEC). 
Since the aim is to learn a causal structure, this method is in many instances insufficient because the set of DAGs in a single MEC may have different causal interpretations. 
We illustrate this point in Figure~\ref{fig:3dags}.

The standard approach to learning the correct causal DAG within its MEC is to use \emph{interventional data} \cite{WSYU17,YKU18}. 

Interventional data is obtained by performing experiments (randomized controlled trials) that target a subset of the variables in the system and change their respective conditional distributions given their direct causes.  
 
This set of nodes is called an \emph{intervention target} and the resulting distribution is an \emph{interventional distribution}. 
Interventions that do not render their targets independent of their direct causes are called \emph{soft interventions}.

To characterize the possible DAG models that can be learned by using interventional data from soft interventions, the authors of \cite{YKU18} introduced the $\I$-\emph{Markov equivalence class} ($\I$-MEC) associated to a DAG $\GG$ and a collection $\I$ of intervention targets. 
The $\I$-MEC for $\GG$ is denoted $\MM_{\I}(\GG)$, and its elements are sequences of interventional distributions, one for each intervention in $\I$.  
They show that the $\I$-MEC of a DAG $\GG$ can be characterized via the $\I$-\emph{Markov property} associated to a DAG $\GG^{\I}$ that is constructed from $\GG$ by the addition of nodes and edges that represent the intervention targets in $\I$. 
The second row in Figure~\ref{fig:3dags} shows an example.

\begin{figure}
\begin{tikzpicture}[thick,scale=0.23]

	\draw (0,5) -- (55,5) -- (55, -9) -- (0, -9) -- (0,5) -- cycle;
	\draw (18,5) -- (18,-9) ; 
	\draw (36,5) -- (36,-9) ; 
	\draw (0,-2) -- (55,-2) ; 
	
	 \node[circle, draw, fill=black!0, inner sep=1pt, minimum width=1pt] (n1) at (3,2) {\footnotesize$1$};
 	 \node[circle, draw, fill=black!0, inner sep=1pt, minimum width=1pt] (n2) at (9,0) {\footnotesize$2$};
 	 \node[circle, draw, fill=black!0, inner sep=1pt, minimum width=1pt] (n3) at (15,1) {\footnotesize$3$};

 	 \node[circle, draw, fill=black!0, inner sep=1pt, minimum width=1pt] (n11) at (21,1) {\footnotesize$1$};
	 \node[circle, draw, fill=black!0, inner sep=1pt, minimum width=1pt] (n22) at (27,2.5) {\footnotesize$2$};
 	 \node[circle, draw, fill=black!0, inner sep=1pt, minimum width=1pt] (n33) at (33,1.25) {\footnotesize$3$};

 	 \node[circle, draw, fill=black!0, inner sep=1pt, minimum width=1pt] (n111) at (39,1.5) {\footnotesize$1$};
	 \node[circle, draw, fill=black!0, inner sep=1pt, minimum width=1pt] (n222) at (45,1) {\footnotesize$2$};
 	 \node[circle, draw, fill=black!0, inner sep=1pt, minimum width=1pt] (n333) at (51,2.5) {\footnotesize$3$};
	 
	
	 \node[circle, draw, fill=black, inner sep=1.5pt, minimum width=1.5pt] (inN1) at (1,-5) { };
	 \node[circle, draw, fill=black!0, inner sep=1pt, minimum width=1pt] (nN1) at (3,-7) {\footnotesize$1$};
 	 \node[circle, draw, fill=black!0, inner sep=1pt, minimum width=1pt] (nN2) at (9,-5) {\footnotesize$2$};
 	 \node[circle, draw, fill=black!0, inner sep=1pt, minimum width=1pt] (nN3) at (15,-7) {\footnotesize$3$};

 	 \node[circle, draw, fill=black, inner sep=1.5pt, minimum width=1.5pt] (inN11) at (19,-5) { };
 	 \node[circle, draw, fill=black!0, inner sep=1pt, minimum width=1pt] (nN11) at (21,-7) {\footnotesize$1$};
	 \node[circle, draw, fill=black!0, inner sep=1pt, minimum width=1pt] (nN22) at (27,-4) {\footnotesize$2$};
 	 \node[circle, draw, fill=black!0, inner sep=1pt, minimum width=1pt] (nN33) at (33,-7.5) {\footnotesize$3$};

     \node[circle, draw, fill=black, inner sep=1.5pt, minimum width=1.5pt] (inN111) at (37,-5) { };
 	 \node[circle, draw, fill=black!0, inner sep=1pt, minimum width=1pt] (nN111) at (39,-7) {\footnotesize$1$};
	 \node[circle, draw, fill=black!0, inner sep=1pt, minimum width=1pt] (nN222) at (45,-5.5) {\footnotesize$2$};
 	 \node[circle, draw, fill=black!0, inner sep=1pt, minimum width=1pt] (nN333) at (51,-6.25) {\footnotesize$3$};

 	 \draw[->]   (n1) -- (n2) ;
 	 \draw[->]   (n2) -- (n3) ;

 	 \draw[->]   (n22) -- (n11) ;
 	 \draw[->]   (n22) -- (n33) ;

     \draw[->]   (n222) -- (n111);
 	 \draw[->]   (n333) -- (n222) ;
	 
	 \draw[->]   (nN1) -- (nN2) ;
 	 \draw[->]   (nN2) -- (nN3) ;

 	 \draw[->]   (nN22) -- (nN11) ;
 	 \draw[->]   (nN22) -- (nN33) ;

     \draw[->]   (nN222) -- (nN111);
 	 \draw[->]   (nN333) -- (nN222) ;
	 
	 \draw[->]   (inN1) -- (nN1) ;
	 \draw[->]   (inN11) -- (nN11) ;
	 \draw[->]   (inN111) -- (nN111) ;

	 \node at (1.5,4) {\footnotesize$\GG_1$} ; 
	 \node at (19.5,4) {\footnotesize$\GG_2$} ; 
	 \node at (37.5,4) {\footnotesize$\GG_3$} ; 
	 \node at (1.5,-3.25) {\footnotesize$\GG_1^{\I}$} ; 
	 \node at (19.5,-3.25) {\footnotesize$\GG_2^{\I}$} ; 
	 \node at (37.5,-3.25) {\footnotesize$\GG_3^{\I}$} ;

\end{tikzpicture}
\caption{\protect
The DAGs $\GG_1,\GG_2,\GG_3$ are Markov equivalent but each one has a distinct causal interpretation.
The second row depicts the $\I$-DAGs for the intervention on node $1$ in each case (i.e. $\I=\{\{1\}\}$).
Without any conditioning, this intervention should yield effects on nodes ``downstream'' from $1$, but not on nodes ``upstream'' from $1$.  
Only in $\GG_{1}^{\I}$ are nodes $2,3$ ``downstream'' from the new node.  
Hence, by this intervention, $\GG_1$ is distinguished as a unique causal structure from the other two graphs in its MEC.}
\label{fig:3dags}
\end{figure}
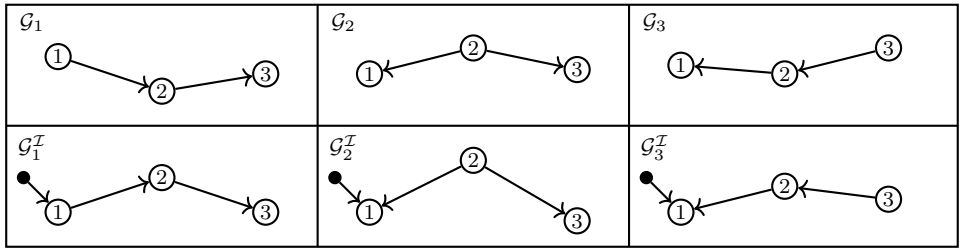

In the case when the random variables in the DAG model are all Gaussian or all discrete,
the model $\MM(\GG)$ is an algebraic variety intersected with the space of model parameters \cite{DSS08,GMS06,GSS05}. The variety $\MM(\GG)$ can be described parametrically, via the
recursive factorization property of the distribution according to the DAG, or implicitly via the vanishing of ideals corresponding
to conditional independence statements associated to the Markov properties of $\GG$.
When the random variables are discrete, the set $\MM(\GG)$ is an algebraic variety intersected with the open probability simplex and $\MM_{\I}(\GG)$ is an algebraic variety intersected with a product of open probability simplices. 
Just as for  discrete DAG models without interventions, $\MM_{\I}(\GG)$ can be defined via a polynomial parameterization or via the vanishing of polynomial ideals associated to the $\I$-Markov property.

We consider the problem of finding the polynomials that define discrete interventional
DAG models  as algebraic varieties intersected with the product of open probability
simplices.  We show in Theorem~\ref{thm: sat interventional DAGs} that the kernel of the polynomial ring map associated to the parameterization of $\MM_{\I}(\GG)$ is a minimal prime of the ideal associated to the $\I$-Markov property that defines the model implicitly. 
This is an extension of \cite[Theorem 8]{GSS05}. 
In Theorem~\ref{thm: classification of balanced int staged trees} we give a graphical characterization of those models $\MM_\I(\GG)$ for which this kernel is equal to a subideal defined by binomial equations, this yields sufficient conditions for the model to be defined by a toric ideal. 
Moreover, we show that
such models have quadratic Gr\"obner basis with squarefree initial terms.

The importance of studying the defining equations of statistical models
has a long history in algebraic statistics. For the case when the model under consideration is an exponential random graph model \cite{P17} that is defined by a toric variety, Diaconis and Sturmfels \cite{DS98} show that generators of the defining ideal of the model make it possible to carry out Fisher's exact
test for the model. 
This test is desirable whenever asymptotic methods
fail to hold, which happens whenever there is a small number of observations
or the observations are sparse. 
This is one example where the geometry of the defining ideal of a model provides useful statistical insights.
Another example arises from the fact toric ideals are prime ideals.
Having the generators of the prime
ideal that define the model is also a useful way to distinguish between
the different models by using algebraic constraints and enables
the practitioner to test if the distributions in the model satisfy a given algebraic constraint by reducing it to a membership question for
an ideal. 
Recent methods using U-statistics have proven useful for testing model membership via polynomial constraints \cite{SDL22}.
The results in this paper will give graphical sufficient conditions for an interventional DAG model to be toric, and as a corrollary we obtain a finite set of polynomial constraints to which such tests may be applied.  
Moreover, we will see that these polynomial constraints are quite simple to work with; namely, they are quadractic binomials -- polynomials of the form $xy - zw$ -- making such tests easy to implement.

Specific to the context of discrete DAG models, another application arises when searching for models within families that generalize discrete DAG models that have the potential to admit nice statistical properties possessed by important subfamilies of discrete DAG models. 
In particular, the toric property for discrete DAG models as studied in this paper corresponds to the condition that the DAG is perfect.  
Perfect DAGs are precisely the DAGs whose defining conditional independence relations are the defining conditional independence relations of an undirected graph; namely, the skeleton of the DAG. 
Since the skeleton of a perfect DAG is chordal, it admits a perfect elimination ordering, which in turn gives an upper bound on the complexity of exact inference via variable elimination for marginal and conditional distributions in the model \cite{KF09}.  
Since this nice property is witnessed by the toric condition, it is natural to ask which models in a family of models generalizing discrete DAGs are also toric. 
This gives a family of models worth investigating to see if similar complexity results hold for exact inference algorithms in this larger family.  
After deriving the aforementioned results, we will say a few more words on this potential application in the Statistical Outlook section at the end of the paper.

To prove the desired theorems about the defining equations of interventional DAG models, we introduce a more general statistical model that we call an interventional staged tree.  Interventional staged trees are a very general class of models which encompass the previously studied models for context-specific interventions \cite{RS07,T08,TSR10}.   We  prove several theorems about the defining equations
of interventional staged tree models and specialize to the case of discrete DAG models to obtain Theorem~\ref{thm: sat interventional DAGs} and Theorem~\ref{thm: classification of balanced int staged trees}. The reason to work at this level of generality is that
 staged trees  make it possible to intuitively
 encode the invariances that must hold in the definition of interventional DAG models. One of the advantages of this more general setting is that by 
 using  interventional staged tree models we can represent more general classes of interventions such as context-specific interventions for staged tree models.

\section{Preliminaries}
The algebraic geometry of discrete DAG models has already been extensively studied, and, more recently, the same has been done for the family of staged tree models, introduced
by Smith and Anderson \cite{SA08}, which generalize them.  
The main goal of this paper is to extend the algebraic theory of discrete DAG models to their interventional generalizations.  To do so, we will work in the more general framework of staged tree models.
Hence, we must first specify how the algebra and geometry of staged tree models relates to that of DAG models.  In this section, we will recall the known results on the algebraic geometry of discrete DAG models and staged tree models.  We will also prove some additional results that clarify how the known results for staged tree models directly generalize the known results for DAG models.

In section~\ref{sec: staged tree models} we review the parametric and implicit descriptions of staged tree models
and state the main results in \cite{DG20} regarding their defining equations.
In section~\ref{sec: discrete DAG models}
we recall the main theorems in \cite{GSS05}
for the defining equations of discrete DAG models.  
An important ingredient to study discrete interventional DAG models by using
staged trees is to have a clear understanding of the staged trees that represent discrete DAG models. The translation of the main properties of
discrete DAG models to the language of staged trees was carried out in
\cite{DS21}. In subsections \ref{sec: stratified uniform compatible}, \ref{subsec: decomposable models} and \ref{subsec: balanced models} we review these notions
since they will be needed in section~\ref{sec: interventional DAG models}
to construct the interventional staged trees that represent interventional
DAG models.

We refer the reader to \cite{CLO13} for the necessary background in algebra. For an introduction to graphical models
we suggest \cite{KF09,L96}. For a more thorough exposition of the topic of defining
ideals of graphical models we refer  to \cite[Chapters 3, 4, 13]{S19}

The statistical models in this paper will often be subsets of the
$n$-dimensional open probability simplex $\Delta_{n}^{\circ}:=\{(a_0,\ldots, a_n)\in\R^{n+1}: a_{i}\in (0,1),\;\; \sum_{i=0}^{n} a_i=1\}$. A point in $\Delta_{n}^{\circ}$
defines a probability mass function for a discrete  random variable with $n+1$ possible outcomes.
\subsection{Staged tree models}
\label{sec: staged tree models}

Let $\TT = (V,E)$ denote a directed rooted tree with vertex set $V$, root node $r$, and edge set $E$ (see \cite{D05} for basic graph theory definitions).  We require that all edges
in $E$ are directed away from the root.
For vertices $v,w\in V$, the directed edge from $v$ to $w$ in $E$ (if it exists) will be denoted by $v\rightarrow w$.  
Let 
$
E(v) :=  \{ v\to w \in E:  w\in \ch_\TT(v)\}, 
$
where $\ch_\TT(v)$ denotes the set of children of $v$ in $\TT$. 
Given a set $\LL$ of labels, to each $e\in E$ we associate a label from $\LL$ via a surjective map $\theta : E \longrightarrow \LL$.  
For $v\in V$, let $\theta_v :=\{ \theta(e) : e\in E(v)\}$. 
\begin{definition}
\label{def: staged tree}
Let $\LL$ be a set of labels. 
A tree $\TT = (V,E)$ together with a labeling $\theta : E \longrightarrow \LL$  is a \emph{staged tree} if 
\begin{enumerate}
	\item for each $v\in V$, $|\theta_v| = |E(v)|$, and 
	\item for any two $v,w\in V$, $\theta_v$ and $\theta_w$ are either equal or disjoint. 
\end{enumerate}
\end{definition}
The vertices $V$ of a staged tree $\TT = (V,E)$ are partitioned as 
\[
V = S_0\sqcup S_1\sqcup \cdots \sqcup S_m,
\]
where $v,w\in V$ are in the same set $S_i$ if and only if $\theta_v = \theta_w$.  
The sets $S_0,\ldots, S_m$ are called the \emph{stages} of $\TT$. 
When drawing a staged tree $(\TT,\theta)$, as in Figure~\ref{fig:tree1}, we use colors to represent nodes
that are in the same stage. We follow the convention that white nodes are singleton stages. In a staged tree, each node in $V$ different from the root, will often be a
sequence of positive integers as in Figure~\ref{fig:tree1}. Each node, that is, each sequence in $V$, represents the unfolding of a sequence of events.
For instance, a node $v$ at distance $3$ from the root node may represent, for example, the sequence $X_1 = 0, X_2 = 1, X_3 = 1$, which is the situation that event 1 did not happen, event 2 did happen and event 3 also happened. 
The values $0,1,1$ correspond to the edges used along the unique path from the root $r$ to the node $v$ in $\TT$.

In the following, we let $(\TT,\theta)$ denote a staged tree with labeling $\theta: E\rightarrow \LL$.  
However, when $\theta$ is understood from context, we will simply write $\TT$.   
We define the parameter space associated to a  staged tree $\TT = (E,V)$ with labeling $\theta: E \longrightarrow \LL$, as
\begin{equation}
\label{eqn: parameter space}
\Theta_\TT := 
\left\{ x\in\R^{|\LL|} :  \forall e\in E, x_{\theta(e)}\in(0,1) \text{ and } \forall v \in V, \sum_{e\in E(v)}x_{\theta(e)} = 1
\right\}.
\end{equation}
Hereafter we let ${\bf i}_\TT$ denote the leaves of the rooted tree $\TT = (V,E)$. 
Note that the parameter space $\Theta_{\TT}$ is a product of open probability simplices.
The number of simplices appearing in $\Theta_{\TT}$ is equal to the number of
stages in $\TT$. We think of each simplex in $\Theta_{\TT}$ as a discrete conditional distribution. The fact that each simplex in $\Theta_{\TT}$  corresponds to a stage $S$, means that all the nodes in $S$ share the same conditional distribution.
For example a staged tree representing sequences of events described as above would have $x_{\theta(v\rightarrow w)} = f(X_i = x_i | X_1 = x_1,\ldots, X_{i-1} = x_{i-1})$ where $v = (x_1,\ldots, x_{i-1})$ and $w = (x_1,\ldots, x_{i-1}, x_i)$. 

Given a node $v\in V$, there is a unique path in $\TT$ from the root of $\TT$ to $v$.  
Denote this path by $\lambda(v)$.  
Similarly, given a path $\lambda$ from the root of $\TT$ to a node $v\in V$, the path corresponds to the end node $v$, and we denote this by writing $v = v(\lambda)$.  
For any path $\lambda$ in $\TT$, we let 
$
E(\lambda) := \{e \in E : e\in \lambda\}.
$

\begin{definition}
\label{def: staged tree model}
Let $\TT = (V,E)$ be a rooted tree.  
The \emph{staged tree model} $\mathcal{M}_{(\TT,\theta)}$ associated to $(\TT,\theta)$ is the image of the map
\begin{equation*}
\begin{split}
\psi_{\TT} &: \Theta_\TT \longrightarrow \Delta^\circ_{|{\bf i}_\TT| -1}; 	\\
\psi_{\TT} &: x \longmapsto p_\ell :=\left(\prod_{e\in E(\lambda(\ell))}x_{\theta(e)}\right)_{\ell\in{\bf i}_\TT}.  
\end{split}
\end{equation*}
\end{definition}

\begin{remark}
Definition~\ref{def: staged tree model} is a parametric description of
the staged tree model $\MM_{(\TT,\theta)}$ in terms of conditional distributions. 
That is, it specifies a joint distribution as a product of conditional distributions according to the chain rule in probability theory.
On the algebraic side, note that this parametrization
is not a monomial parameterization because the parameters in $\Theta_{\TT}$
satisfy sum-to-one conditions. In general, the polynomials that
define a staged tree model need not be binomials or even quadratic.
\end{remark}

\begin{example}
Let $(\TT,\theta)$ be the staged tree in Figure~\ref{fig:tree1} where  the set of nodes is $V=\{r,0,1,00,01,10,11,000,001,010,011,100,101,110,111\}$,
the set of edges $E$ consists of the arrows 
between nodes in shown in Figure~\ref{fig:tree1}, 
and  each of these edges  has a label in the 
set
$\LL=\{s_1,s_2,\ldots, s_{10}\}$, which defines the labeling $\theta: E\to\LL$.
The  leaves of $\TT$  are the set of outcomes
for the joint distribution of three binary random variables $X_1,X_2,X_3$ represented as the unfolding of events in the order
$X_1,X_2,X_3$.
Hence, the node $0$ represents the outcome $X_1 = 0$ and the nodes $00$ and $01$ represent the outcomes $(X_1,X_2) = (0,0)$ and $(X_1,X_2) = (0,1)$, respectively. 
Two nodes with the same color are in the same stage, and the nodes in white are each in their own stage. 
The staged tree model has parameter space $\Theta_{\TT}
= \Delta_{1}^{\circ}\times\Delta_{1}^{\circ}\times\Delta_{1}^{\circ}\times\Delta_{1}^{\circ}\times\Delta_{1}^{\circ}$. The set of stages with the corresponding equalities of conditional distributions they encode are summarized in the next table.
\begin{center}
\begin{tabular}{|c|c |c|} \hline
Stage & Conditional Distribution &Labels of the stage \\ \hline
$S_0=\{r\}$ & $f(X_1)$ &$\{s_1,s_2\}$\\ \hline
 $S_1=\{0\}$ & $f(X_2|X_1=0)$ & $\{s_3,s_4\}$\\ \hline
 $S_2=\{1\}$ & $f(X_2|X_1=1)$& $\{s_5,s_6\}$ \\ \hline
 $S_3=\{00,10\}$ &  $f(X_3|X_{12}=00)=f(X_3|X_{12}=10)$ &$\{s_7,s_8\}$\\ \hline
 $S_4=\{01,11\}$ & $f(X_3|X_{12}=01)=f(X_3|X_{12}=11)$ &$\{s_9,s_{10}\}$\\ \hline

\end{tabular}
\end{center}
To illustrate the parameterization of the model, we write one of its coordinates $[\psi_{\TT}(x)]_{110}=x_{s_2}x_{s_6}x_{s_9}$. Note that this coordinate is simply one way to write
the product of conditional probabilities $f(110)=f(X_1=1)f(X_2=1|X_1=1)f(X_3=0)$.
The model $\MM_{(\TT,\theta)}$ is the set of probability distributions in $\Delta_{7}^{\circ}$ that satisfy the equations
$p_{000}p_{101}-p_{100}p_{001} = 0$ and $p_{010}p_{111}-p_{110}p_{011}=0$. 
The staged tree $(\TT,\theta)$ is an alternative representation of the discrete DAG model $\MM(\GG_1)$ associated to the DAG $1\to 2 \to 3$. The general construction for how to represent a DAG $\GG$ via a staged tree is explained
in Example~\ref{ss:dagtree}.
\end{example}


\begin{figure}
\centering
\begin{tikzpicture}[thick,scale=0.25]
	
	
 	 \node (u3) at (4,0) {\small$000$};
 	 \node(u4) at (4,-2) {\small$001$};
 	 \node(u5) at (4,-4) {\small$010$};
 	 \node(u6) at (4,-6) {\small$011$};
	 
 	 \node (w3) at (4,-8) {\small$100$};
 	 \node (w4) at (4,-10) {\small$101$};
 	 \node (w5) at (4,-12) {\small$110$};
 	 \node (w6) at (4,-14) {\small$111$};
	 
	 \node[circle, draw, fill=violet!50, inner sep=1pt, minimum width=1pt] (u1) at (-8,-1) {\small$00$};
 	 \node[circle, draw, fill=orange!40, inner sep=1pt, minimum width=1pt] (u2) at (-8,-5) {\small$01$};

 	 \node[circle, draw, fill=violet!50, inner sep=1pt, minimum width=1pt] (w1) at (-8,-9) {\small$10$};
 	 \node[circle, draw, fill=orange!40, inner sep=1pt, minimum width=1pt] (w2) at (-8,-13) {\small$11$};

 	 \node[circle, draw, fill=black!0, inner sep=1pt, minimum width=1pt] (u) at (-16,-3) {\small$\;0\;$};

 	 \node[circle, draw, fill=black!0, inner sep=1pt, minimum width=1pt] (w) at (-16,-11) {\small$\;1\;$};

 	 \node[circle, draw, fill=black!0, inner sep=1pt, minimum width=1pt] (r) at (-24,-7) {$\;r\;$};

 	 \draw[->]   (r) -- node[midway,sloped,above]{$s_1$}  (u) ;
 	 \draw[->]   (r) -- node[midway,sloped,below]{$s_2$}  (w) ;
 	 \draw[->]   (u) -- node[midway,sloped,above]{$s_3$} (u1) ;
 	 \draw[->]   (u) -- node[midway,sloped,below]{$s_4$} (u2) ;

 	 \draw[->]   (u1) -- node[midway,sloped,above]{$s_7$} (u3) ;
 	 \draw[->]   (u1) -- node[midway,sloped,below]{$s_8$} (u4) ;
 	 \draw[->]   (u2) -- node[midway,sloped,above]{$s_9$} (u5) ;
 	 \draw[->]   (u2) -- node[midway,sloped,below]{$s_{10}$} (u6) ;

 	 \draw[->]   (w) -- node[midway,sloped,above]{$s_5$} (w1) ;
 	 \draw[->]   (w) -- node[midway,sloped,below]{$s_6$} (w2) ;
	 
 	 \draw[->]   (w1) -- node[midway,sloped,above]{} (w3) ;
 	 \draw[->]   (w1) -- node[midway,sloped,below]{} (w4) ;
 	 \draw[->]   (w2) -- node[midway,sloped,above]{} (w5) ;
 	 \draw[->]   (w2) -- node[midway,sloped,below]{} (w6) ;
	  	
\end{tikzpicture}
	\vspace{-0.2cm}
\caption{Staged tree representation of the discrete DAG model $1 \to 2 \to 3$ with binary variables.}
\label{fig:tree1}
\end{figure}
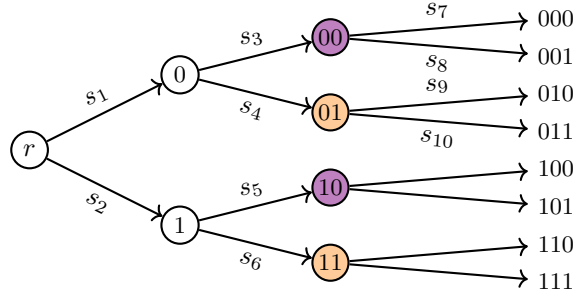
Let $\TT=(V,E)$ be a staged tree with labeling $\theta: \LL\rightarrow E$, and fix a node $v\in V$. 
We write $[v]\subset {\bf i}_\TT$ for the leaves of $\TT$ whose root-to-leaf paths pass through the node $v$. 
For a point $p = (p_{\ell})_{\ell\in {\bf i}_{\TT}}\in \MM_{(\TT,\theta)}$, let $p_{[v]}:= \sum_{\ell\in[v]}p_{\ell}$.

\begin{lemma} 
\label{lem:invariantEquations}
\cite[Lemma 1, Lemma 2]{DG20}
Let $\MM_{(\TT,\theta)}$ be a staged tree model. 
\begin{enumerate} 
\item   Fix  $(p_{\ell})_{\ell\in {\bf i}_{\TT}}\in \MM_{(\TT,\theta)}$, $x\in \Theta_{\TT}$, and suppose
$\psi_{\TT}(x)=p$. Then  $x_{\theta(v\to v')}=\frac{p_{[v']}}{p_{[v]}}$
for every edge $v\to v'\in E$.
\item If two nodes $v,w$ are in the same stage,
then for all $(p_{\ell})_{\ell\in {\bf i}_{\TT}}\in \MM_{(\TT,\theta)}$ the equation\[p_{[v']}p_{[w]}=p_{[w']}p_{[v]}\] holds 
for all 
$v\to v'\in E(v), w\to w'\in E(w)$ with $\theta(v\to v')=\theta(w \to w')$.
\end{enumerate}
\end{lemma}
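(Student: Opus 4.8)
The plan is to prove the two statements by directly analyzing the product structure of the parameterization $\psi_\TT$ via root-to-leaf paths. The key combinatorial observation is that for any node $v \in V$, the quantity $p_{[v]} = \sum_{\ell \in [v]} p_\ell$ factors as the product of edge-labels along the path $\lambda(v)$ from the root to $v$, times a sum over all continuations of that path to the leaves. More precisely, I would first establish the factorization
\[
p_{[v]} = \left(\prod_{e \in E(\lambda(v))} x_{\theta(e)}\right)\cdot \left(\sum_{\ell \in [v]} \prod_{e \in E(\lambda(\ell)) \setminus E(\lambda(v))} x_{\theta(e)}\right),
\]
where the second factor sums over all root-to-leaf paths extending $\lambda(v)$. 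The crucial point is that this second factor equals $1$: by the sum-to-one constraints defining $\Theta_\TT$ (equation~\eqref{eqn: parameter space}), summing the product of edge-labels over all children at each successive level telescopes to $1$. I would prove this by induction on the height of the subtree rooted at $v$, the base case being when $v$ is itself a leaf (empty product equal to $1$), and the inductive step grouping the continuation paths according to their first edge out of $v$ and applying $\sum_{e \in E(v)} x_{\theta(e)} = 1$.

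\emph{Proof of statement (1).} Granting the identity $p_{[v]} = \prod_{e \in E(\lambda(v))} x_{\theta(e)}$, the first claim is immediate: for an edge $v \to v' \in E$, the path $\lambda(v')$ is obtained from $\lambda(v)$ by appending the single edge $v \to v'$, so $E(\lambda(v')) = E(\lambda(v)) \sqcup \{v \to v'\}$. Therefore
\[
\frac{p_{[v']}}{p_{[v]}} = \frac{\prod_{e \in E(\lambda(v'))} x_{\theta(e)}}{\prod_{e \in E(\lambda(v))} x_{\theta(e)}} = x_{\theta(v \to v')},
\]
where the division is legitimate because $x_{\theta(e)} \in (0,1)$ for all $e$, so $p_{[v]} \neq 0$.

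\emph{Proof of statement (2).} Suppose $v, w$ lie in the same stage, and take edges $v \to v' \in E(v)$ and $w \to w' \in E(w)$ with $\theta(v \to v') = \theta(w \to w')$. Applying statement~(1) to both edges gives $x_{\theta(v \to v')} = p_{[v']}/p_{[v]}$ and $x_{\theta(w \to w')} = p_{[w']}/p_{[w]}$. Since the two edges share the same label, these ratios are equal, and cross-multiplying yields
\[
p_{[v']}\, p_{[w]} = p_{[w']}\, p_{[v]},
\]
which is exactly the asserted equation; again all denominators are nonzero because the coordinates of $x$ are strictly positive.

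\emph{The main obstacle} I anticipate is a careful proof of the telescoping identity that the continuation-sum factor equals $1$, since this requires setting up the induction cleanly over the tree structure and correctly partitioning the set of continuation paths by their initial edge out of $v$. Once this identity is in hand, both statements follow by purely formal manipulation of the product parameterization, and the positivity of the parameters in $\Theta_\TT$ guarantees that no division by zero occurs. A secondary subtlety is that statements~(1) and~(2) are asserted for points $p \in \MM_{(\TT,\theta)}$, which are precisely the images $\psi_\TT(x)$ of parameters $x \in \Theta_\TT$; I would be careful to note that statement~(2) holds on the whole model (not merely for a fixed preimage) because every such $p$ arises as some $\psi_\TT(x)$, and the resulting binomial relation is independent of the choice of $x$.
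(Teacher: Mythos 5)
Your proof is correct and takes essentially the same route as the paper: the paper cites \cite[Lemma 1, Lemma 2]{DG20} for this statement but proves the analogous interventional version (Lemma~\ref{lem:intsimplex}) by exactly your argument, namely the factorization $p_{[v]}=\prod_{e\in E(\lambda(v))}x_{\theta(e)}$ obtained by pulling the common prefix out of the sum and collapsing the continuation-sum to $1$ via the sum-to-one conditions, followed by division for part (1) and cross-multiplication for part (2).
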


In light of the previous lemma, we see that $p_{[v']}/p_{[v]}$ is the conditional probability of transitioning to $v'$ given arrival at $v$.

\subsubsection{Defining equations of staged tree models}
\label{subsub: defining eqns of trees}
Using the equations in part $(2)$ of Lemma~\ref{lem:invariantEquations}, we define the ideal of model invariants for $\MM_{(\TT,\theta)}$. 
For this we need the polynomial ring $\R[D_\TT]:= \R[p_\ell : \ell \in {\bf i}_{\TT}]$. 
The element $p_{[v]} \in \R[D_{\TT}]$ is also defined by $p_{[v]}:=\sum_{\ell\in[v]}p_{\ell}$.  
Note that in Lemma~\ref{lem:invariantEquations}, $(p_{\ell})_{\ell\in {\bf i}_{\TT}}$ denotes a point in the open probability simplex, whereas in  $\R[D_\TT]$,  $p_\ell$ is an indeterminate in the polynomial ring.
We will make sure to distinguish between these two interpretations of $p_{\ell}$ whenever it is needed.

\begin{definition} 
\label{def:invs}
The ideal of model invariants, $I_{\MM(\TT)}$, contained in $\R[D_\TT]$ and associated to the staged tree model $\MM_{(\TT,\theta)}$ is
\begin{equation}
\begin{split}
I_{\MM(\TT)} &:= \langle p_{[v]}p_{[w']}-p_{[v']}p_{[w]}: v,w \in S_{i}, i\in \{0,\ldots, m\}\\
 & \phantom{:=} v'\in \ch_{\TT}(v), w' \in \ch_{\TT}(w)
\text{ and } \theta(v \to v')=\theta(w\to w')\rangle,
\end{split}
\end{equation}
where $S_0,\ldots, S_m$ are the stages of $(\TT,\theta)$.
\end{definition}

We use the ideal in the previous definition to describe the staged tree model as an algebraic variety restricted to the open probability simplex. If $J\subset \R[D_{\TT}]$ is
an ideal, we define $V_{\leq}(J)$ to be the set of
nonnegative points in $\R^{|D_{\TT}|}$ that vanish at every 
element of $J$.
We use the notation $p_{+}=\sum_{\ell\in {\bf i}_{\TT}}p_{\ell}$.
\begin{proposition} 
\label{prop:subsimplexinvariants}
\cite[Theorem 3.1]{DG20}
The equality $\MM_{(\TT,\theta)} = V_{\geq}(I_{\MM(\TT)}+\langle p_{+}-1\rangle)$ holds inside the open probability simplex $\Delta_{|{\bf i}_{\TT}|-1}^{\circ}$.
\end{proposition}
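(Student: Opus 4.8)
The plan is to prove the two inclusions separately, working inside the open simplex $\Delta_{|{\bf i}_{\TT}|-1}^{\circ}$ so that every coordinate $p_\ell$ of a point under consideration is strictly positive. The inclusion $\MM_{(\TT,\theta)} \subseteq V_{\geq}(I_{\MM(\TT)}+\langle p_{+}-1\rangle)$ is essentially immediate from the preceding results: a point $p = \psi_\TT(x)$ with $x \in \Theta_\TT$ has nonnegative (indeed positive) coordinates, satisfies $p_{+}=1$ because the root-to-leaf products of the $x_{\theta(e)}$ sum to $1$ (a short induction on the depth of $\TT$ using the constraints $\sum_{e \in E(v)} x_{\theta(e)} = 1$), and satisfies every generator of $I_{\MM(\TT)}$ by part $(2)$ of Lemma~\ref{lem:invariantEquations}, which is precisely the statement that $p_{[v]}p_{[w']} - p_{[v']}p_{[w]}$ vanishes at $p$ for the relevant $v,w,v',w'$.

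The substance of the proof is the reverse inclusion. Given a positive point $p \in \Delta_{|{\bf i}_{\TT}|-1}^{\circ}$ with $p_{+}=1$ that vanishes on $I_{\MM(\TT)}$, I would exhibit a parameter $x \in \Theta_\TT$ with $\psi_\TT(x) = p$. Motivated by part $(1)$ of Lemma~\ref{lem:invariantEquations}, the natural candidate assigns to each label $s \in \LL$ the value $x_s := p_{[v']}/p_{[v]}$, where $v \to v'$ is any edge with $\theta(v\to v') = s$; note $p_{[v]} = \sum_{\ell \in [v]} p_\ell > 0$ by positivity, so the ratio is defined. The first thing to check is that this is well-defined, independently of the chosen edge. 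If $\theta(v\to v') = \theta(w \to w') = s$, then $s \in \theta_v \cap \theta_w$, so by axiom $(2)$ of Definition~\ref{def: staged tree} the sets $\theta_v,\theta_w$ are equal and $v,w$ lie in a common stage $S_i$; the corresponding generator $p_{[v]}p_{[w']} - p_{[v']}p_{[w]}$ of $I_{\MM(\TT)}$ vanishes at $p$, which rearranges (again using positivity) to $p_{[v']}/p_{[v]} = p_{[w']}/p_{[w]}$. Hence $x_s$ does not depend on the representative edge, and this is exactly the step where the full set of invariants is needed.

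It then remains to verify that $x \in \Theta_\TT$ and that $\psi_\TT(x) = p$. For membership in $\Theta_\TT$, positivity gives $x_s > 0$, and for each non-leaf $v$ the children's leaf-sets $\{[v'] : v' \in \ch_\TT(v)\}$ partition $[v]$, so $\sum_{v' \in \ch_\TT(v)} p_{[v']} = p_{[v]}$; dividing by $p_{[v]}$ yields the sum-to-one constraint $\sum_{e \in E(v)} x_{\theta(e)} = 1$ and also forces each $x_s < 1$ (here one uses that every internal node has out-degree at least two, as is standard for staged trees with nonempty $\Theta_\TT$, so no ratio can equal $1$). Finally, for a leaf $\ell$ with path $r = v_0 \to v_1 \to \cdots \to v_d = \ell$, the product defining the $\ell$-coordinate of $\psi_\TT(x)$ telescopes:
\[
\prod_{e \in E(\lambda(\ell))} x_{\theta(e)} = \prod_{k=1}^{d} \frac{p_{[v_k]}}{p_{[v_{k-1}]}} = \frac{p_{[\ell]}}{p_{[r]}} = \frac{p_\ell}{p_{+}} = p_\ell,
\]
using $p_{[r]} = p_{+} = 1$ and $p_{[\ell]} = p_\ell$. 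This shows $\psi_\TT(x) = p$ and completes the reverse inclusion.

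The main obstacle is the well-definedness of the parameter assignment in the second paragraph: one must argue that edges sharing a label necessarily emanate from nodes in the same stage, via the staged-tree axioms, so that a single invariant generator controls the common ratio. Everything else—the sum-to-one conditions, the range $(0,1)$, and the telescoping—is routine once the candidate $x$ is known to be well-defined.
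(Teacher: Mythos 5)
Your proposal is correct and follows essentially the same route as the paper's argument (given explicitly for the interventional analogue in Section~5.2 and in \cite[Theorem 3.1]{DG20}): the forward inclusion via Lemma~\ref{lem:invariantEquations}(2), and the reverse inclusion by setting $x_{\theta(v\to v')}=p_{[v']}/p_{[v]}$, checking well-definedness and the sum-to-one conditions, and telescoping along root-to-leaf paths. Your extra observation that edges sharing a label must emanate from nodes in a common stage, so that a single generator of $I_{\MM(\TT)}$ forces the ratios to agree, is exactly the point the paper summarizes as ``by using the polynomials in $I_{\MM(\TT)}$ we can check that $x_{\theta(v\to v')}=x_{\theta(w\to w')}$.''
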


\begin{remark}
In general the ideal of model invariants of a staged tree model is not prime.
\end{remark}
We now associate two ideals to a given staged tree $(\TT,\theta)$. The first one is the toric ideal of $(\TT,\theta)$.
Let $\R[D_\TT]$ be as before and let $\R[\Theta]_\TT :=\R[z,\LL]$ denote the ring where the labels in $\LL$ are treated as indeterminates and $z$ is an additional homogenizing parameter.

\begin{definition}
\label{def: toric ideal of a staged tree}
The \emph{toric ideal of a staged tree} $(\TT,\theta)$ is the kernel of the ring homomorphism
\begin{align*}
\Psi_\TT^{\mathsf{toric}} : \R[D_\TT]&\longrightarrow\R[\Theta]_\TT \\
                p_{\ell}& \mapsto z\cdot\prod_{e\in E(\lambda(\ell))} \theta(e)
\end{align*}
namely, the ideal $\ker(\Psi_\TT^{\toric})$.  
\end{definition}

The ideal $\ker(\Psi_\TT^{\toric})$ captures the inherent toric structure of the staged tree model based solely on its parametrization.
The staged tree model associated to $(\TT,\theta)$ has extra linear
algebraic constraints in its parameter space. 
Thus, when the labels $\LL$ correspond to parameters in  $\Theta_\TT$ defined in \eqref{eqn: parameter space}, we require that the indeterminates in the ring $\R[\Theta]_\TT$ further satisfy the linear equations
$
\sum_{e\in E(v)}{\theta(e)} = 1 , \text{ for all } v\in V.
$
We let $\mathfrak{q}_\TT$ denote the ideal in $\R[\Theta]_\TT$ generated by the polynomials
$
\sum_{e\in E(v)}\theta(e) - 1
$
for all $v\in V$, and consider the quotient ring $\R[\Theta]_\TT/\mathfrak{q}_\TT$.

\begin{definition}
\label{def: staged tree model ideal}
The \emph{staged tree model ideal} associated to the staged tree $(\TT,\theta)$ is the kernel, $\ker(\Psi_{\TT})$, of the map $\Psi_{\TT}=\pi \circ \Psi_{\TT}^{\toric}$ where $\pi: \R[\Theta]_\TT \to \R[\Theta]_\TT/\mathfrak{q}_\TT$ is the canonical projection.
\end{definition}  

\begin{remark}
\label{rmk: homogenizing parameter}
The reason we include a homogenizing parameter $z$ in Definition~\ref{def: toric ideal of a staged tree} and therefore in Definition~\ref{def: staged tree model ideal} is 
to consider the defining ideal of the staged tree model as a variety in projective space.
This is common in algebraic statistics and is explained in
\cite[Chapter 3]{S19}.
\end{remark}
     
It follows immediately from Definitions~\ref{def: toric ideal of a staged tree} and~\ref{def: staged tree model ideal} that $\ker(\Psi_\TT^{\toric})\subset\ker(\Psi_\TT)$.  
Geometrically we have $\MM_{(\TT,\theta)}=V_{\geq}(\ker(\Psi_{\TT}))$ whereas $\MM_{(\TT,\theta)}\subset V_{\geq}(\ker(\Psi_{\TT}^{\toric})+\langle p_{+}-1 \rangle)$. 
The next theorem clarifies the relation between the ideal of model invariants
of a staged tree $I_{\MM(\TT)}$ and the staged tree model ideal $\ker(\Psi_{\TT})$ .
 
\begin{theorem}
\label{thm: sat staged tree}
\cite[Section 4.2]{DG20}
Let $(\TT,\theta)$ be a stage tree.
\begin{enumerate}
\item There is a containment of ideals $I_{\MM(\TT)}\subset \ker(\Psi_{\TT})$. 
If ${\bf p} =\prod_{v\in V} p_{[v]}$, then $(I_{\MM(\TT)}:({\bf p})^{\infty})= \ker(\Psi_{\TT})$.
\item The ideal $\ker(\Psi_{\TT})$ is a minimal prime of
$I_{\MM(\TT)}$. 
\item The following equality of subsets of the open probability simplex holds:
\[
V_{\geq}(I_{\MM(\TT)}+\langle p_{+} -1\rangle ) = V_{\geq}(\ker \Psi_{\TT} + \langle p_{+} -1\rangle)= \MM_{(\TT,\theta)}.
\]
\end{enumerate}
\end{theorem}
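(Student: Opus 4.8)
The plan is to treat the three parts in order, with the heart of the argument being the saturation identity in part~(1). First I would record the basic computation that underlies everything: for a node $v\in V$, the leaves below $v$ partition as $[v]=\bigsqcup_{v'\in\ch_\TT(v)}[v']$, so that $p_{[v]}=\sum_{v'\in\ch_\TT(v)}p_{[v']}$ holds identically in $\R[D_\TT]$, and every root-to-leaf path through $v$ splits as the path $\lambda(v)$ followed by a path inside the subtree rooted at $v$. Applying $\Psi_\TT^{\toric}$ and factoring out $\prod_{e\in E(\lambda(v))}\theta(e)$ leaves the sum of all path-products from $v$ to the leaves below it; by the sum-to-one relations generating $\mathfrak{q}_\TT$ this sum equals $1$, so modulo $\mathfrak{q}_\TT$ one obtains the clean formula $\Psi_\TT(p_{[v]})=z\cdot\prod_{e\in E(\lambda(v))}\theta(e)$. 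With this in hand the containment $I_{\MM(\TT)}\subset\ker(\Psi_\TT)$ is immediate: for $v,w$ in a common stage with $\theta(v\to v')=\theta(w\to w')$, both $\Psi_\TT(p_{[v']}p_{[w]})$ and $\Psi_\TT(p_{[w']}p_{[v]})$ equal $z^2\,\theta(v\to v')\prod_{e\in E(\lambda(v))}\theta(e)\prod_{e\in E(\lambda(w))}\theta(e)$, so each binomial generator maps to $0$.

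For the saturation $(I_{\MM(\TT)}:{\bf p}^\infty)=\ker(\Psi_\TT)$ I would first note that $\R[\Theta]_\TT/\mathfrak{q}_\TT$ is a polynomial ring — each sum-to-one relation eliminates one label linearly, and relations coming from distinct stages involve disjoint label sets — hence a domain, so $\ker(\Psi_\TT)$ is prime; and since $\Psi_\TT(p_{[v]})\neq 0$ for every $v$, we have ${\bf p}\notin\ker(\Psi_\TT)$. The inclusion $(I_{\MM(\TT)}:{\bf p}^\infty)\subseteq\ker(\Psi_\TT)$ then follows formally from primeness: if $g\,{\bf p}^n\in I_{\MM(\TT)}\subseteq\ker(\Psi_\TT)$ then ${\bf p}^n\notin\ker(\Psi_\TT)$ forces $g\in\ker(\Psi_\TT)$.

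The reverse inclusion is the main obstacle. Here I would pass to the localization $\R[D_\TT][{\bf p}^{-1}]$ and show that the extension of $I_{\MM(\TT)}$ already equals the extension of $\ker(\Psi_\TT)$; contracting back then yields the saturation identity, since $\ker(\Psi_\TT)$ is prime and avoids ${\bf p}$. In the localized quotient each edge-ratio $t_{v\to v'}:=p_{[v']}/p_{[v]}$ is a unit, the invariant relations of Lemma~\ref{lem:invariantEquations} force $t_{v\to v'}$ to depend only on the stage of $v$ and the label $\theta(v\to v')$ (so the $t$'s descend to one variable per label in $\LL$), the identity $p_{[v]}=\sum_{v'}p_{[v']}$ gives $\sum_{e\in E(v)}t_e=1$ for free, and every $p_\ell$ becomes $p_+\prod_{e\in E(\lambda(\ell))}t_e$. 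This exhibits the localized coordinate ring as generated by $p_+$ together with the label-ratios subject only to the sum-to-one relations — precisely the localized image of $\Psi_\TT$ under the correspondence $p_+\leftrightarrow z$, $t_e\leftrightarrow\theta(e)$. Making this rigorous — that no relations beyond $\mathfrak{q}_\TT$ survive, equivalently that the localization of $I_{\MM(\TT)}$ is already prime with the correct quotient — is the delicate point; I expect the cleanest route is induction on the number of levels of $\TT$, using the decomposition of $\TT$ into the subtrees rooted at the children of $r$ and the recursion $p_\ell=x_{\theta(r\to c_i)}\,p_\ell^{(i)}$ for the parameterization.

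Granting part~(1), part~(2) is formal: a saturation is the intersection of those minimal primes of $I_{\MM(\TT)}$ not containing ${\bf p}$, and an intersection of primes that is itself prime must coincide with one of them, so $\ker(\Psi_\TT)$ is a minimal prime of $I_{\MM(\TT)}$. For part~(3), the containment $I_{\MM(\TT)}\subseteq\ker(\Psi_\TT)$ gives $V_{\geq}(\ker\Psi_\TT+\langle p_{+}-1\rangle)\subseteq V_{\geq}(I_{\MM(\TT)}+\langle p_{+}-1\rangle)$, and the latter equals $\MM_{(\TT,\theta)}$ by Proposition~\ref{prop:subsimplexinvariants}; conversely any $p=\psi_\TT(x)$ with $x\in\Theta_\TT$ satisfies $g(p)=(\Psi_\TT g)(x)=0$ for all $g\in\ker(\Psi_\TT)$ (since $x$ satisfies the relations generating $\mathfrak{q}_\TT$) and has $p_{+}=1$, so $\MM_{(\TT,\theta)}\subseteq V_{\geq}(\ker\Psi_\TT+\langle p_{+}-1\rangle)$, and all three sets coincide.
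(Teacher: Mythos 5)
Your proposal follows essentially the same route as the paper: this theorem is quoted from \cite{DG20}, but the paper reproves its generalization (Theorem~\ref{thm:intsat} for interventional staged trees, of which this is the case of a trivial intervention) by exactly your strategy — verify the generators of the invariant ideal die under $\Psi_\TT$ via the formula $\Psi_\TT(p_{[v]})=z\prod_{e\in E(\lambda(v))}\theta(e)$, then localize at $\mathbf{p}$ and identify the two localized rings through the edge-ratio correspondence $\theta(v\to v')\leftrightarrow p_{[v']}/p_{[v]}$. Parts (2) and (3) are then formal, as you say, and your use of Proposition~\ref{prop:subsimplexinvariants} together with the evaluation argument for $\MM_{(\TT,\theta)}\subseteq V_\geq(\ker\Psi_\TT+\langle p_+-1\rangle)$ matches the paper.

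The one place you stop short is the step you yourself flag as delicate: showing that in the localization no relations beyond the sum-to-one conditions survive. No induction on levels is needed. The paper closes this by exhibiting the map $\phi$ sending $\theta(v\to v')\mapsto p_{[v']}/p_{[v]}$ as a two-sided inverse of the localized $\Psi_\TT$. Well-definedness of $\phi$ is exactly where the localized generators of $I_{\MM(\TT)}$ enter (after inverting $\mathbf{p}$ the binomial $p_{[v']}p_{[w]}-p_{[w']}p_{[v]}$ becomes the equality of ratios, so $\phi$ depends only on the label, and $\sum_{v'}p_{[v']}=p_{[v]}$ gives $\phi(\mathfrak{q}_\TT)=0$); the two composites are then the identity by the telescoping product
\[
\phi(\Psi_\TT(p_\ell))=\prod_{i=0}^{m-1}\frac{p_{[u_{i+1}]}}{p_{[u_i]}}=\frac{p_\ell}{p_{[r]}}=p_\ell,
\]
and by $\Psi_\TT(p_{[v']}/p_{[v]})=\theta(v\to v')$. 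Since an invertible ring map is injective, this is precisely the statement that ``only the relations in $\mathfrak{q}_\TT$ survive,'' and the saturation identity follows by contracting back along the prime $\ker(\Psi_\TT)$, exactly as you set it up.
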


\subsection{Discrete DAG models}
\label{sec: discrete DAG models}
First we fix notation for  vectors of discrete random variables, their outcome space and
their marginal distributions.
Let $X_{[p]}=(X_1,\ldots, X_p)$ be a vector of discrete random variables
with joint distribution $\prob$. 
For a subset $S\subset[p]$ we let $X_S$ denote the subvector of jointly distributed random variables $(X_k : k\in S)$.  
We  assume that $X_k$ has state space $[d_k]:=\{1,\ldots,d_k\}$ for all $k\in[p]$.  
Let $\RR :=\prod_{k\in[p]}[d_k]$ denote the state space of the joint distribution $\prob$.   
Similarly, we let $\RR_S:= \prod_{i\in S}[d_i]$ denote the state space of $X_S$.  
In particular, we have that $\RR_{\{k\}} = [d_k]$ for all $k\in[p]$.  
We let $\xx$, $(x_1,\ldots,x_p)$ or $x_1\cdots x_p$ denote the outcome $X_1 = x_1,\ldots, X_p = x_p$ for  $x_k\in[d_k]$ and all $k\in[p]$. 
Note that an outcome is an element of $\RR$.
Given an outcome $\xx := x_1\cdots x_p$ of $(X_1,\ldots, X_p)$ and a subset $S\subset[p]$, we let $\xx_S$ denote the outcome $(X_k = x_k : k\in S)$ of $X_S$. 
Given outcomes $\xx_A$ and $\xx_B$ of $X_A$ and $X_B$, respectively, we let the concatenation $\xx_A\xx_B$ or $(\xx_A,\xx_B)$ denote the corresponding outcome of the marginal $X_{A\cup B}$. 

Let $\GG = ([p],E)$ be a DAG on nodes $[p]:=\{1,\ldots,p\}$ with edge set $E$. A \emph{parent} of $k\in[p]$ in $\GG$ is a node $j$ for which $j\rightarrow k\in E$.  
A \emph{descendant} of $k\in[p]$ in $\GG$ is a node $j$ for which there is a directed path from $k$ to $j$ in $\GG$.  
Conversely, $k$ is called an \emph{ancestor} of $j$.  
In the following, we let $\pa_\GG(k)$, $\de_\GG(k)$ and $\an_\GG(k)$ denote the sets of parents, descendants, and ancestors of $k$ in $\GG$, respectively.  
We also let $\nd_\GG(k)$ denote the collection of \emph{nondescendants} of $k$ in $\GG$; that is, all nodes $j\in[p]$ that are not descendants of $k$.  

\begin{definition}
\label{def: discrete DAG model}
Let $\GG=([p],E)$ be a DAG and $X_{[p]}$
a vector of discrete random variables with
joint distribution $\prob$ and probability mass function $f$. We say that 
$\prob $ is Markov to $\GG$ if $f$ factors
as 
\[
f(\xx)=\prod_{k=1}^{p}f(x_{k}|\xx_{\pa_{\GG}(k)}) \qquad \mbox{for all $\xx\in \RR$}.
\] The collection of positive probability mass functions
$
\MM(\GG):= \{f\in \Delta_{|\RR|-1}^{\circ}: f \text{ is Markov to } \GG\}
$
is the \emph{discrete DAG model} associated to $\GG$.
\end{definition}
Discrete DAG models are a subclass of staged tree models. We summarize the general construction of the
staged tree of a DAG model in the following example.

\begin{example}[Discrete DAG models]
\label{ss:dagtree}
Let $\GG=([p],E)$ denote a DAG. We now construct a staged tree $(\TT_G,\theta)$ that defines the
same DAG model as $\GG$, namely such that $\MM_{(\TT_\GG,\theta)}=\MM(\GG)$. The first step is to fix an ordering of
$[p]$. Let $\pi = \pi_1\cdots\pi_p\in\mathfrak{S}_{p}$ be a permutation of $[p]$,  
we say that $\pi$ is a \emph{linear extension} or \emph{topological ordering} of a DAG $\GG = ([p],E)$ if $\pi_i^{-1}<\pi_{j}^{-1}$ whenever $\pi_i\rightarrow\pi_j\in E$. For simplicity, assume that
 $\pi = 12\cdots p$ is a linear extension of $\GG$. The staged tree associated to $\GG$ and
 the linear extension $\pi$ is denoted by $\TT_\GG^{\pi}$. The set of nodes of $\TT_{\GG}^\pi$
 is defined by $V:=\{r\} \cup \bigcup_{j\in[p]}\RR_{[j]}$. The edges are given as follows:
for all $v,w\in V$, we let $v\rightarrow w\in E$ if and only if $v = x_1\cdots x_j$ and $w = x_1\cdots 
x_jx_{j+1}$ for some $x_1\cdots x_j\in\RR_{[j]}$ and $x_1\cdots x_jx_{j+1}\in\RR_{[j+1]}$ or $v=r$ and $w=x_1$ 
for some $x_1\in \RR_{\{1\}}$.
In terms of labels, we define the set 
$
\LL :=
\{  
f({x}_j \mid {\xx}_{\pa_\GG(j)}) : j\in[p], {\xx}\in\RR
\}
$.
The labelling  $\theta: E\longrightarrow \LL$ is given by
\begin{equation*}
\begin{split}
     (x_1\cdots x_j \rightarrow x_1\cdots x_jx_{j+1})&\mapsto f({ x}_{j+1} \mid {\xx}_{\pa_\GG(j+1)}), \\
    (r\to x_1) & \mapsto f(x_1).
\end{split}
\end{equation*}
The {\em staged tree associated to the DAG $\GG$ (with respect to the linear extension $\pi$ of $\GG$)} is then 
$\TT_\GG^{\pi} := (V,E)$ and its labeling is $\theta$. In summary, this construction of $\TT_{\GG}^{\pi}$ represents the outcome space of $X_{[p]}$ as an event tree where the unfolding of events is in the chosen ordering $\pi$. The labeling $\theta$ defines the stages on the tree representation 
$\TT_{\GG}$.  Here, the stages are in one-to-one correspondence  with the conditional distributions $f(X_j|\xx_{\pa_{\GG}(j+1)})$, $j\in [p], \xx\in\RR$.
Note that each choice of linear extension $\pi$
of $\GG$ defines a different staged tree representation of $\GG$. We will often write $\TT_\GG$
to denote a staged tree representation of $\GG$ and only write
 $\TT_\GG^\pi$ when it is necessary to highlight the linear extension $\pi$ used in the construction of $\TT_\GG$.
Since any discrete distribution $\prob$ over $(X_1,\ldots, X_p)$ that is Markov to $\GG$ has a probability mass function that admits the recursive factorization in Definition~\ref{def: discrete DAG model}, then the staged-tree model associated to $\TT_\GG$ is precisely $\MM(\GG)$ (when we restrict to only discrete distributions).  
We let $\TT_\mathbb{G}$ denote the collection of all staged trees $\TT_\GG$ associated to any DAG and any of its linear extensions.  
\end{example} 
\begin{remark}
\label{rm:linearextension}
 If $\pi,\pi'$
are two linear extensions of $\GG$, then $\TT_{\GG}^{\pi}$ and $\TT_{\GG}^{\pi'}$ have the same label set $\LL$ from Example~\ref{ss:dagtree}.
Moreover, $\Psi_{\TT_{\GG}^{\pi}}=\Psi_{\TT_{\GG}^{\pi'}}$ and thus $\mathcal{M}_{\TT_{\GG}^{\pi}}=\MM_{\TT_{\GG}^{\pi'}}$. However,
the levels of these two trees are different. 
That is, $V^{\pi}=\{r\} \cup \bigcup_{j\in [p]}\RR_{[\pi(j)]}$ whereas $V^{\pi'}=\{r\}\cup \bigcup_{j\in [p]}\RR_{[\pi'(j)]}$. Using the terminology in \cite{GS18}, the trees $\TT_{\GG}^{\pi'}$ and $\TT_{\GG}^{\pi}$ are related by a finite composition of swaps.
\end{remark}

\begin{definition}
\label{def: ideal of DAG model}
The \emph{defining ideal} of a discrete DAG model $\MM(\GG)$ is the ideal $\ker(\Psi_{\TT_{\GG}})$ of its staged tree representation. 
\end{definition}

\begin{remark}
Definition~\ref{def: ideal of DAG model}  agrees with the definition in \cite{GSS05} of the defining ideal of a DAG model. When $\TT_{\GG}$ is a staged tree representation of $\MM(\GG)$, the leaves of $\TT_{\GG}$ are the elements in $\RR$. Therefore
$\RR[D_{\TT_{\GG}}]=\R[p_{\xx}: \xx \in \RR]$.
\end{remark}
\subsubsection{Markov properties for DAG models}
\label{subsubsec: markov properties}
There are several different polynomial ideals that define
$\MM(\GG)$ inside the open probability simplex. These ideals are
defined based on the Markov properties associated to $\GG$ \cite{L96, P09}.

Let $X_{[p]}$ be  a vector of discrete random variables with joint distribution $\mathbb{P}$ and probability mass function $f$.
We say that $\mathbb{P}$ (or $f$) satisfies the \emph{local Markov property} with respect to $\GG$ if $\mathbb{P}$ satisfies
\[
X_k \independent X_{\nd_\GG(k)\setminus\pa_\GG(k)} \mid X_{\pa_\GG(k)}
\]
for all $k\in[p]$.  
We let $\loc(\GG)$ denote the collection of conditional independence relations defining the local Markov property with respect to $\GG$.    
We say that $\mathbb{P}$ (or $f$) satisfies the \emph{global Markov property} with respect to $\GG$ if $\mathbb{P}$ satisfies
\[
X_A \independent X_B \mid X_S
\]
for disjoint subsets $A,B,S\subset [p]$ with $A,B \neq\emptyset$ such that $A$ and $B$ are \emph{d-separated} given $S$ in $\GG$ (see \cite{L96,P09} for the definition of d-separation).  
We let $\glo(\GG)$ denote the collection of conditional independence relations defining the global Markov property with respect to $\GG$. 
We further say that $\mathbb{P}$ (or $f$) satisfies the \emph{ordered Markov property} with respect to $\GG$ if $\mathbb{P}$ satisfies
\[
X_{\pi_k} \independent X_{\{\pi_1,\ldots,\pi_{k-1}\}\setminus\pa_\GG(\pi_k)} \mid X_{\pa_\GG(\pi_k)}
\]
for all $k\in[p]$ and some linear extension $\pi$ of $\GG$. 
We similarly let $\pred(\GG,\pi)$ denote the collection of conditional independence relations defining the ordered Markov property with respect to $\GG$ and $\pi$.  
The following fundamental theorem states that, for a distribution $\mathbb{P}$, satisfying any one of these conditions with respect to $\GG$ is equivalent to $\mathbb{P}$ being Markov to $\GG$.
\begin{theorem}
\label{thm: factorization}
\cite{L96,P09}
Let $\mathbb{P}$ be a distribution over the random variables $X_1,\ldots,X_p$ and let $\GG = ([p],E)$ be a DAG. 
The following are equivalent:
\begin{enumerate}
	\item $\mathbb{P}$ is Markov to $\GG$,
 \item $\mathbb{P}$ satisfies the global Markov property with respect to $\GG$,
	\item $\mathbb{P}$ satisfies the local Markov property with respect to $\GG$, 
	\item $\mathbb{P}$ satisfies the ordered Markov property with respect to $\GG$. 
\end{enumerate} 
\end{theorem}
\subsubsection{Defining equations of discrete DAG models}
\label{subsubsec: defining equations of DAGS}
We use the notation $\R[D]:=\R[p_{\xx}: \xx \in \RR]$.
If $A\subset [p]$ and $\xx_{A}\in \RR_{A}$, then the element $p_{\xx_{A}+}$ represents the marginalization $f(\xx_A)$
over the variables in $[p]\setminus A$ as a polynomial in the ring $\R[D]$; that is, 
\[
p_{\xx_{A}+}:=\sum_{\xx_{[p]\setminus A}\in \RR_{[p]\setminus A}}p_{\xx_A,\xx_{[p]\setminus A}}.
\] 
When $A$ is empty we simply write $p_{+}$ for the summation above.
\begin{proposition} 
\label{prop:ciIdeal}
\cite[Proposition 4.1.6]{S19}
A conditional independence statement of the form ${X_A\independent X_B | X_C}$ holds for the
vector $(X_1,\ldots,X_p)$ with probability mass function $f$, if and only if
\begin{equation}
\label{eqn: CI}
f(\xx_A,\xx_B,\xx_C)f(\xx_A^\prime,\xx_B^\prime,\xx_C) - f(\xx_A,\xx_B^\prime,\xx_C)f(\xx_A^\prime,\xx_B,\xx_C) = 0
\end{equation}
for all $\xx_A,\xx'_A \in \RR_A$, $\xx_B,\xx'_B\in \RR_B$ and $\xx_c\in \RR_C$. 
\end{proposition}

Hence, we associate the polynomial 
\begin{equation}
    \label{eqn: CI poly}
p_{\xx_A\xx_B\xx_C +}p_{\xx'_A\xx'_B\xx_C+}-p_{\xx_A\xx'_B\xx_C+}p_{\xx'_A\xx_B\xx_C+}\in\R[D]
\end{equation}
with the equation~\eqref{eqn: CI}.
\begin{definition}
\label{def: CI ideal}
The ideal $I_{{A\independent B |C}}$ in $\R[D]$ is the ideal generated by the polynomials~\eqref{eqn: CI poly} taken over all $\xx_A,\xx'_A \in \RR_A$, $\xx_B,\xx'_B\in \RR_B$ and $\xx_C\in \RR_C$. 
If $\CC=\{{X_{A_1} \independent X_{B_1} |X_{C_1}},\ldots,{X_{A_t} \independent X_{B_t} |X_{C_t}}\}$ is a collection of CI relations, the \emph{conditional independence ideal} $I_\CC$ is the sum of the ideals $I_{{A_1\independent B_1 |C_1}},\ldots,I_{{A_t \independent B_t |C_t}}$.
\end{definition}

The next theorem relates the semialgebraic subsets of $\Delta_{|\RR|-1}^{\circ}$ defined by the
ideals $I_{\loc(\GG)},I_{\glo(\GG)}$ and $\ker(\Psi_{\TT_{\GG}})$. It is          
followed by its algebraic analog.

\begin{theorem}
\cite[Theorem 3, Theorem 3.27]{GSS05,L96} 
\label{thm:hammersleyCliff}
The following equalities of subsets of the probability simplex holds:
\[V_{\geq}(I_{\loc(\GG)}+\langle p_{+}-1\rangle) = V_{\geq}(I_{\glo(\GG)}+\langle p_{+}-1\rangle)=V_{\geq}(\ker(\Psi_{\TT_\GG}))=
\mathrm{image}(\psi_{\TT_\GG}).\]
\end{theorem}

\begin{theorem} 
\label{thm:sat}
\cite[Theorem 8]{GSS05}
There is a containment $I_{\loc(\GG)}\subset I_{\glo(\GG)}\subset \ker(\Phi_\GG)$. 
The prime ideal $\ker(\Psi_{\TT_{\GG}})$ is a minimal prime component of both of the ideals $I_{\loc(\GG)}$ and $ I_{\glo(\GG)}$.
\end{theorem}

\subsubsection{Predecessor ideals}
\label{subsec: predecessor ideals}
Given a DAG $\GG$, it is natural to ask for the connection between the ideal of model invariants $I_{\MM(\TT_\GG)}$ and the conditional independence ideals associated to $\GG$ in subsection~\ref{subsubsec: defining equations of DAGS}.  
To establish this connection, we make use of the following definition:
\begin{definition}
\label{def: alt conditional ind ideals}
The ideal $I^\ast_{{A\independent B |C}}$ in $\R[D]$ is the ideal generated by the quadratic polynomials
\[
p_{\xx_A\xx_B\xx_C +}p_{\xx'_B\xx_C+}-p_{\xx_A\xx'_B\xx_C+} p_{\xx_B\xx_C+}
\]
for all $\xx_A \in \RR_A$, $\xx_B,\xx'_B\in \RR_B$ and $\xx_c\in \RR_C$.
If 
\[
\CC=\{{X_{A_1} \independent X_{B_1} |X_{C_1}},\ldots,{X_{A_t} \independent X_{B_t} | X_{C_t}}\}
\]
is a collection of conditional independence statements, the ideal $I^\ast_\CC$ is the sum of the ideals $I^\ast_{{A_1\independent B_1 |C_1}},\ldots,I^\ast_{{A_t \independent B_t |C_t}}$.
\end{definition}

Notice that the ideal $I^\ast_{A\independent B\mid C}$ is generated by polynomials that correspond precisely to the definition of the positive probability mass function $f$ satisfying the conditional independence statement $X_A\independent X_B \mid X_C$; namely, for all $\xx_A \in \RR_A$, $\xx_B,\xx'_B\in \RR_B$ and $\xx_c\in \RR_C$
\[
f(\xx_A,\xx_B,\xx_C)f(\xx_B^\prime,\xx_C) = f(\xx_A,\xx_B^\prime,\xx_C)f(\xx_B,\xx_C).
\]
Moreover, $I^\ast_{A\independent B \mid C}\subset I_{A\independent B \mid C}$, or more generally, $I^\ast_\CC\subset I_\CC$.  
Recall that $\TT_\GG$ is constructed according to a linear extension, say $\pi\in\mathfrak{S}_p$, of $\GG$.  
\begin{lemma}
\label{lem: predecessors and model invariants}
Let $\GG$ be a DAG with linear extension $\pi$. 
Then 
$
I_{\MM(\TT_\GG^\pi)} = I^\ast_{\pred(\GG,\pi)}.  
$
\end{lemma}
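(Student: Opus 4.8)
The plan is to establish the equality of the two ideals $I_{\MM(\TT_\GG^\pi)}$ and $I^\ast_{\pred(\GG,\pi)}$ by showing that their generating sets agree. Recall that the staged tree $\TT_\GG^\pi$ is the object constructed in Example~\ref{ss:dagtree}, so its levels satisfy $(L_1,\ldots,L_p)\sim(X_{\pi_1},\ldots,X_{\pi_p})$ and its stage structure is governed by Proposition~\ref{prop: characterization DAG staged trees}. The left-hand ideal is generated by the binomials $p_{[v]}p_{[w']}-p_{[v']}p_{[w]}$ coming from Definition~\ref{def:invs}, taken over pairs $v,w$ in a common stage with matching-label children $v',w'$. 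The right-hand ideal is generated, via Definition~\ref{def: alt conditional ind ideals}, by the binomials realizing the ordered Markov statements $\pred(\GG,\pi)$, namely $X_{\pi_k}\independent X_{\{\pi_1,\ldots,\pi_{k-1}\}\setminus\pa_\GG(\pi_k)}\mid X_{\pa_\GG(\pi_k)}$. The goal is a bijection between these two sets of binomials that respects the ring $\R[D_\TT]=\R[D]$ after the identification of leaves $\mathbf{i}_\TT$ with the outcome space $\RR$.

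First I would fix $k\in[p]$ and translate both descriptions into statements about level $L_k$ of $\TT_\GG^\pi$. By Proposition~\ref{prop: characterization DAG staged trees}, assuming the standard linear extension $\pi=12\cdots p$ (the general case following by Remark~\ref{rm:linearextension}), the level $L_k=\RR_{[k]}$ is partitioned into stages $S_\yy=\{\xx\in L_k:\xx_{\pa_\GG(k+1)}=\yy\}$ indexed by $\yy\in\RR_{\pa_\GG(k+1)}$, since the relevant $\Pi_k$ is exactly $\pa_\GG(k+1)$. The crucial observation is the identity $p_{[v]}=p_{\xx_{[k]}+}$ whenever the node $v\in L_k$ corresponds to the outcome $\xx_{[k]}\in\RR_{[k]}$: both equal the marginal probability of the realization $x_1\cdots x_k$, since $[v]$ collects exactly those leaves whose root-to-leaf path passes through $v$, i.e.\ those full outcomes extending $\xx_{[k]}$. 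Under this dictionary, the invariant binomial $p_{[v]}p_{[w']}-p_{[v']}p_{[w]}$ attached to two same-stage nodes $v,w\in L_k$ with matching-labeled children becomes precisely a generator $p_{\xx_A\xx_B\xx_C+}p_{\xx'_B\xx_C+}-p_{\xx_A\xx'_B\xx_C+}p_{\xx_B\xx_C+}$ of $I^\ast$ for the statement at level $k$, where $C=\pa_\GG(k+1)$, $A=\{k+1\}$, and $B=[k]\setminus\pa_\GG(k+1)$.

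The heart of the argument is checking that this correspondence is a bijection on generators. In one direction, I would verify that two nodes $v,w\in L_k$ lie in the same stage if and only if their associated outcomes agree on $\pa_\GG(k+1)$, which by the partition description forces the $B$-coordinates to be free while the $C$-coordinates are fixed; the matching-label condition $\theta(v\to v')=\theta(w\to w')$ then identifies the child outcome in coordinate $k+1$, producing exactly the factor structure of the $I^\ast$ generator. In the reverse direction, every generator of $I^\ast_{\pred(\GG,\pi)}$ records a choice of fixed $\xx_C$, two distinct values of the $B$-block, and a shared $A$-value, which reassembles into a unique same-stage pair with matching children. I would also confirm the edge cases: when $k=p$ the level contributes nothing new (there is no variable $X_{k+1}$), matching the fact that $\pred(\GG,\pi)$ ranges over $k\in[p]$ with the $k=1$ statement being vacuous. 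Running over all $k$ simultaneously then gives equality of the two generating sets, hence of the ideals.

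The main obstacle I anticipate is bookkeeping the precise form of the $I^\ast$ generators against the invariant binomials: Definition~\ref{def: alt conditional ind ideals} uses the asymmetric quadratics $p_{\xx_A\xx_B\xx_C+}p_{\xx'_B\xx_C+}-p_{\xx_A\xx'_B\xx_C+}p_{\xx_B\xx_C+}$ rather than the symmetric CI generators, and one must be careful that the marginal $p_{\xx_B\xx_C+}$ (a marginal over a coarser set than a single level node) matches $p_{[w]}$ for the appropriate parent node $w$ one level structure up. Concretely, the subtlety is that in the invariant $p_{[v]}p_{[w']}-p_{[v']}p_{[w]}$ both $v$ and $w$ sit at level $k$, whereas the $I^\ast$ generator mixes a marginal at the full outcome $\xx_A\xx_B\xx_C$ with a marginal at $\xx_B\xx_C$; reconciling these requires using the nested-marginal relation $p_{[v]}=p_{\xx_{[k]}+}$ together with the observation that fixing $\xx_C$ and letting $\xx_B$ vary traces out exactly one stage. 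Once this identification of marginals with node-sums is pinned down carefully, the bijection is essentially forced, so I expect the difficulty to be entirely in the combinatorial indexing rather than in any deeper algebraic content.
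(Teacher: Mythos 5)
Your proposal is correct and follows essentially the same route as the paper, whose proof of this lemma is a single sentence asserting that the equality follows from the construction of $\TT_\GG^\pi$ in Example~\ref{ss:dagtree} and Definition~\ref{def:invs}; you have simply written out the dictionary $p_{[v]} = p_{\xx_{[k]}+}$, the identification of stages at level $k$ with fixed $\pa_\GG(k+1)$-configurations, and the resulting match between the invariant binomials and the $I^\ast$ generators for $A=\{k+1\}$, $B=[k]\setminus\pa_\GG(k+1)$, $C=\pa_\GG(k+1)$, which is exactly the verification the paper leaves implicit. The only quibble is that the term $p_{\xx_B\xx_C+}$ is not ``coarser'' than a level-$k$ node marginal but is precisely one (since $B\cup C=[k]$), a point your own identification already resolves.
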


\begin{proof}
The result follows from the construction of $\TT_\GG^\pi$ given in Example~\ref{ss:dagtree} and Definition~\ref{def:invs}. 
\end{proof}

We then have the following corollary to Theorem~\ref{thm: sat staged tree}, offering an alternative proof to Theorem~\ref{thm:sat}.  
\begin{corollary}
\label{cor: alternative sat}
Let $\GG$ be a DAG with linear extension $\pi$.  
\begin{enumerate}
\item There is a containment of ideals 
\[
I^\ast_{\pred(\GG,\pi)} \subset I_{\pred(\GG,\pi)} \subset I_{\loc(\GG)} \subset I_{\glo(\GG)}\subset \ker(\Psi_{\TT_\GG}), 
\]
and if ${\bf p} =\prod_{\xx \in \bigcup_{k\in[p]}\RR_{\{\pi_1,\cdots,\pi_k\}}} p_{\xx+}$, then 
$
(J:({\bf p})^{\infty}) = \ker(\Psi_{\TT_\GG})
$
where $J$ is any one of the ideals in the above chain. 
\item The ideal $\ker(\Psi_{\TT_\GG})$ is a minimal prime of the ideals $I^\ast_{\pred(\GG,\pi)}$, $I_{\pred(\GG,\pi)}$, $I_{\loc(\GG)}$ and $I_{\glo(\GG)}$. 
\item We have the following equality of subsets of the open probability simplex:
\[
V_{\geq}(J + \langle p_{+} -1\rangle) =  \MM_{(\TT,\theta)},
\]
where $J$ is any one of the ideals in the chain from part (1).
\end{enumerate}
\end{corollary}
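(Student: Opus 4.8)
The plan is to transport the entire statement to the staged tree $\TT := \TT_\GG^\pi$ and then read it off from Theorem~\ref{thm: sat staged tree}, using two identifications as bridges: Lemma~\ref{lem: predecessors and model invariants}, which gives $I^\ast_{\pred(\GG,\pi)} = I_{\MM(\TT)}$ at the bottom of the chain, and Remark~\ref{rmk: homogenizing parameter}, which gives $\ker(\Psi_\TT) = \ker(\Phi_\GG)$ at the top. Once the chain of containments is in place, all three parts follow by \emph{sandwiching} an arbitrary ideal $J$ from the chain between these two extremes, $I^\ast_{\pred(\GG,\pi)} \subseteq J \subseteq \ker(\Phi_\GG)$, and applying the corresponding item of Theorem~\ref{thm: sat staged tree} at the two ends. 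Throughout I use that the non-root vertices of $\TT$ are indexed exactly by $\bigcup_{k\in[p]}\RR_{\{\pi_1,\ldots,\pi_k\}}$ (Remark~\ref{rm:linearextension}) and that $p_{[v]} = p_{\xx+}$ when the node $v$ represents the partial outcome $\xx$, so that ${\bf p}$ in the statement is $\prod_{v\in V\setminus\{r\}}p_{[v]}$.

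First I would establish the chain of containments. The leftmost containment $I^\ast_{\pred(\GG,\pi)} \subseteq I_{\pred(\GG,\pi)}$ is the general inclusion $I^\ast_\CC\subseteq I_\CC$ recorded after Definition~\ref{def: alt conditional ind ideals}, applied to $\CC = \pred(\GG,\pi)$. The two rightmost containments $I_{\loc(\GG)}\subseteq I_{\glo(\GG)}\subseteq\ker(\Phi_\GG)$ are exactly Theorem~\ref{thm:sat}. The one genuinely new link is $I_{\pred(\GG,\pi)}\subseteq I_{\loc(\GG)}$. Here I would argue that, because $\pi$ is a linear extension, $\{\pi_1,\ldots,\pi_{k-1}\}\subseteq\nd_\GG(\pi_k)$ for every $k$, so the ordered Markov relation $X_{\pi_k}\independent X_{\{\pi_1,\ldots,\pi_{k-1}\}\setminus\pa_\GG(\pi_k)}\mid X_{\pa_\GG(\pi_k)}$ is obtained from the local relation $X_{\pi_k}\independent X_{\nd_\GG(\pi_k)\setminus\pa_\GG(\pi_k)}\mid X_{\pa_\GG(\pi_k)}$ by marginalizing the middle set down to a subset. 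At the level of ideals this is a containment: each generator of $I_{\pred(\GG,\pi)}$ is a $2\times 2$ minor built from coarser marginals $p_{\xx_A\xx_{B'}\xx_C+}$, and since such a marginal is a sum of the finer marginals $p_{\xx_A\xx_B\xx_C+}$, every such minor expands as an integer combination of the $2\times 2$ minors generating the corresponding summand of $I_{\loc(\GG)}$. Summing over $k\in[p]$ yields $I_{\pred(\GG,\pi)}\subseteq I_{\loc(\GG)}$.

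With the chain in hand, I would finish the three parts as follows. For the saturation in (1), monotonicity of the operation $(-):({\bf p})^\infty$ gives $(I^\ast_{\pred(\GG,\pi)}:({\bf p})^\infty)\subseteq (J:({\bf p})^\infty)\subseteq(\ker(\Phi_\GG):({\bf p})^\infty)$ for every $J$ in the chain. The right-hand side equals $\ker(\Phi_\GG)$ because that ideal is prime and none of its factors $p_{\xx+}$ lies in it (each has a nonzero image under $\Phi_\GG$), so saturating changes nothing; the left-hand side equals $\ker(\Psi_\TT)=\ker(\Phi_\GG)$ by Theorem~\ref{thm: sat staged tree}(1) together with the two identifications above. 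This squeezes $(J:({\bf p})^\infty)=\ker(\Phi_\GG)$ for all $J$. For (2), Theorem~\ref{thm: sat staged tree}(2) (again via the identifications) says $\ker(\Phi_\GG)$ is a minimal prime of $I^\ast_{\pred(\GG,\pi)}$; since every $J$ in the chain satisfies $I^\ast_{\pred(\GG,\pi)}\subseteq J\subseteq\ker(\Phi_\GG)$, the elementary fact that a minimal prime $P$ of an ideal remains a minimal prime of any intermediate ideal $I\subseteq J\subseteq P$ shows $\ker(\Phi_\GG)$ is a minimal prime of each $J$. For (3), applying $V_{\geq}(\,\cdot + \langle p_+-1\rangle)$ reverses all inclusions, so $V_{\geq}(\ker(\Phi_\GG)+\langle p_+-1\rangle)\subseteq V_{\geq}(J+\langle p_+-1\rangle)\subseteq V_{\geq}(I^\ast_{\pred(\GG,\pi)}+\langle p_+-1\rangle)$; by Theorem~\ref{thm: sat staged tree}(3) both extremes equal $\MM_{(\TT,\theta)}=\MM(\GG)$, so every intermediate set equals it too.

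The main obstacle I anticipate is matching the saturating element ${\bf p}$ in the corollary with the one in Theorem~\ref{thm: sat staged tree}. The theorem saturates by $\prod_{v\in V}p_{[v]}$, which includes the root term $p_{[r]}=p_+$, whereas the corollary's product ranges only over non-root vertices, i.e.\ ${\bf p}=\prod_{v\in V\setminus\{r\}}p_{[v]}$, so the two differ by the factor $p_+$. On the top end this is harmless, since $\ker(\Phi_\GG)$ is prime and $p_+\notin\ker(\Phi_\GG)$. On the bottom end I must check that the extra factor $p_+$ does not shrink the saturation of $I^\ast_{\pred(\GG,\pi)}$, equivalently that $p_+$ is a nonzerodivisor modulo $(I^\ast_{\pred(\GG,\pi)}:({\bf p})^\infty)$, i.e.\ that no associated prime of this saturation contains $p_+$. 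I would handle this using the linear relation $p_+=\sum_{v\in L_1}p_{[v]}$ together with Lemma~\ref{lem:invariantEquations}, or, failing a clean argument, simply incorporate $p_+=p_{[r]}$ into the product; this bookkeeping point, rather than any deep algebra, is where the care is needed.
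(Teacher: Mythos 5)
Your proposal is correct and follows essentially the same route as the paper's own (one-sentence) proof: identify $I^\ast_{\pred(\GG,\pi)}$ with $I_{\MM(\TT_\GG^\pi)}$ via Lemma~\ref{lem: predecessors and model invariants}, identify $\ker(\Phi_\GG)$ with $\ker(\Psi_{\TT_\GG^\pi})$, and then sandwich every ideal in the chain between the two ends using Theorem~\ref{thm: sat staged tree} and the fact that saturation preserves chains. The extra details you supply — the expansion of coarse $2\times 2$ minors into fine ones to get $I_{\pred(\GG,\pi)}\subseteq I_{\loc(\GG)}$, and the bookkeeping around the missing factor $p_+=p_{[r]}$ in the saturating element — are precisely the points the paper leaves implicit, and your treatment of them is sound.
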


\begin{proof}
Notice first that the defining ideal of $\MM(\GG)$ is equal to $\ker(\Psi_{\TT_\GG^\pi})$ and that $I_{\pred(\GG,\pi)}\subset  I_{\loc(\GG)}$.  
Since the saturation of a chain of ideals with respect to a fixed ideal produces another chain of ideals, the result follows from Lemma~\ref{lem: predecessors and model invariants} and Theorem~\ref{thm: sat staged tree}, together with the construction of $\TT_\GG^\pi$ given in Example~\ref{ss:dagtree}. 
\end{proof}

Corollary~\ref{cor: alternative sat} shows that Theorem~\ref{thm: sat staged tree} is a genuine generalization of Theorem~\ref{thm:sat}.  
It also demonstrates that the predecessor ideals $I_{\pred(\GG,\pi)}$, and their subideals $I_{\pred(\GG,\pi)}^\ast$ play an important role when generalizing theorems like Theorem~\ref{thm:sat}.  
Neither the ideals $I_\CC^\ast$ nor the predecessor ideals $I_{\pred(\GG,\pi)}$ appear to have been studied before this paper. 

\subsection{Stratified, uniform and compatible staged trees}
\label{sec: stratified uniform compatible}
In this section we present several definitions
that narrow down the class of staged tree models we consider. 
Let $\TT = (V,E)$ be a rooted tree for $v\in V$, the \emph{level} of $v$, denoted $\ell(v)$, is the number of edges in the unique path from the root of $\TT$ to $v$.  
If all the leaves of $\TT$ have the same level, then the \emph{level} of $\TT$ is the level of any of its leaves.
A staged tree is called \emph{stratified} if all its leaves have the same level and if every two vertices in the same stage have the same level.
For $k\in[p]\cup\{0\}$ we let  $L_k$ denote the set of all vertices in $\TT$ with level $k$, and we call $L_k$ a \emph{level} of $\TT$.  

Let $\TT = (V,E)$ be a stratified staged tree with labeling $\theta: \LL\rightarrow E$.  
Notice that $L_0 = \{r\}$ is the root of $\TT$. 
If $\TT$ is partitioned into stages $S_0\sqcup S_1\sqcup \cdots \sqcup S_m$, where $S_0 = L_0$, then the partition $(S_0,\ldots,S_m)$ of $V$ is a refinement of $(L_0,\ldots,L_p)$ since $\TT$ is stratified.  
When working with stratified staged trees, we will often only refer to its levels $(L_1,\ldots, L_p)$ as $L_0$ only contains the root.  
The staged trees in $\TT_\mathbb{G}$ are all stratified, this follows from \cite[Section 3.2]{SA08}.

Another important feature of the staged trees $\TT\in\TT_\mathbb{G}$ is that all nodes in a given level $L_k$ of $\TT$ have the same number of children.  
More generally, we will call a staged tree with this property \emph{uniform}.  
Given a uniform, stratified staged tree $\TT$, with levels $L_0, L_1,\ldots, L_p$, and a node $v\in L_k$ for some $k\in\{0,\ldots, p-1\}$, it is natural to index the edges in $E(v)$ with the outcomes $[d_{k+1}]$ of a (discrete) random variable $X_{k+1}$.  
In this way, we are associating the random variable $X_{k+1}$ to level $L_{k+1}$ of $\TT$, as level $L_{k+1}$ encodes all possible outcomes of $X_{k+1}$ given any node in level $L_k$.  
We write $(L_1,\ldots,L_p)\sim(X_1,\ldots,X_p)$ to denote this association.  
Note that, for a fixed set of labeled variables $\{X_1,\ldots, X_p\}$, the order of assignment of variables to the levels $(L_1,\ldots, L_p)$ can be of significance.  
For instance, we may have assigned the variables as $(L_1,\ldots, L_p) \sim (X_{\pi_1},\ldots, X_{\pi_p})$ for some permutation $\pi = \pi_1\cdots \pi_p\in\mathfrak{S}_p$.  
We call the order $\pi$ the \emph{causal ordering} of $\TT$.  
For instance, if $\TT_\GG\in\TT_{\mathbb{G}}$ then the causal ordering of $\TT_\GG$ is a linear extension of the DAG $\GG$. 

Given a uniform, stratified staged tree $\TT$ with levels $(L_1,\ldots,L_p)\sim(X_1,\ldots,X_p)$, we can then consider the staged tree model $\MM_{(\TT,\theta)}$.  
For a distribution $\prob\in \MM_{(\TT,\theta)}$ and $v\in{\bf i}_\TT$, we have that $p_v = \left(\prod_{e\in E(\lambda(v))}x_{\theta(e)}\right)$ is the probability of the outcome $ x_1\cdots x_p$, where $x_k$ is the outcome of the random variable $X_k$ associated to the unique edge $e\in E(\lambda(v))$ passing between a node in level $L_{k-1}$ and $L_k$ in $\TT$.  
Similarly, a node $v\in L_k$, for $k>0$, is associated to an outcome $x_1\cdots x_k\in\RR_{[k]}$ which is uniquely determined by the path $\lambda(v)$ in $\TT$.  
Given a node $v\in L_k$, we will denote this association of the outcome $x_1\cdots x_k$ with $v$ as $v\sim x_1\cdots x_k$.  
From this perspective, the parameterization of $\MM_{(\TT,\theta)}$ given in Definition~\ref{def: staged tree model} admits a natural interpretation via conditional probabilities.

\begin{lemma}
\cite[Lemma 2.1]{DS21}
\label{lem: parameter interpretation}
Let $\TT$ be a uniform, stratified staged tree, let $\prob\in\MM_{(\TT,\theta)}$ with probability mass function $f$, and fix $v\in{\bf i}_\TT$.  
If $e\in E(\lambda(v))$ is the edge $u\rightarrow w$ between levels $L_{k-1}$ and $L_k$, and if $u\sim x_1\cdots x_{k-1}$ and $w\sim x_1\cdots x_{k-1}x_k$, for $x_1\cdots x_{k-1}\in\RR_{[k-1]}$ and $x_1\cdots x_{k-1}x_k\in\RR_{[k]}$, respectively, then 
$
x_{\theta(e)} = f(x_k \mid x_1\cdots x_{k-1}).
$
\end{lemma}

Given a uniform, stratified staged tree $\TT = (V,E)$ with levels $(L_1,\ldots, L_p)\sim (X_1,\ldots, X_p)$, and a node $v\in L_k$, we let $v\rightarrow v_i$ denote the edge in $E(v)$ associated to the outcome $i\in[d_k]$ of the random variable $X_k$.  
Since $\TT$ is stratified, then any two nodes in the same stage are also in the same level.  
This fact makes the following definition well-defined.
\begin{definition}
\label{def: compatibly labeled}
A uniform, stratified staged tree $\TT = (V,E)$ with labels $\theta: E \rightarrow \LL$ and levels $(L_0,\ldots, L_p)\sim (X_1,\ldots, X_p)$ is \emph{compatibly labeled} if, for all $0\leq k < p$, 
\[
\theta(v\rightarrow v_i) = \theta(w\rightarrow w_i) \qquad \mbox{for all $i\in[d_k]$}
\]
for any two vertices $v,w\in L_k$  in the same stage.
\end{definition}
A simple characterization of the staged trees in $\TT_{\mathbb{G}}$ then follows from the above definitions and the construction in Example~\ref{ss:dagtree}.
\begin{proposition}
\cite[Proposition 2.2]{DS21}
\label{prop: characterization DAG staged trees}
A compatibly labeled staged tree $\TT$ with levels $(L_1,\ldots, L_p)$ is in $\TT_{\mathbb{G}}$ if and only if for all $k\in[p-1]$ the level $L_k = \RR_{[k]}$ is partitioned into stages 
$
\bigsqcup_{\yy\in\RR_{\Pi_k}}S_{\yy}
$
for some subset $\Pi_k\subset[k-1]$, where 
$
S_{\yy} = \{ \xx\in L_k: \xx_{\Pi_k} = \yy\}.
$
\end{proposition}

\subsection{Decomposable graphical models}

\label{subsec: decomposable models}
We say that a DAG $\GG = ([p],E)$ is \emph{perfect} if for all $k\in[p]$ the set $\pa_\GG(k)$ is complete in $\GG$.  
When $\GG$ is perfect, the model $\MM(\GG)$ is called \emph{decomposable}. 
As mentioned in the introduction, it is desirable if the conditional independence structure of a data-generating distribution can be represented with a perfect DAG $\GG$, specifically because it can then be represented by a chordal undirected graph. 
One can then use the \emph{perfect elimination ordering} (PEO) 
given by any linear extension of $\GG$ in the variable elimination algorithm for exact marginal and/or posterior inference with complexity bounds given by the combinatorics of the graph (see, for instance \cite{KF09}).  
The following theorem states that this desirable statistical property is equivalent to desirable algebraic properties of the ideals defining the model; namely, that the ideal $\ker(\Psi_{\TT_{\GG}})$ is equal to $I_{\glo(\GG)}$ and that this ideal is toric.  
This purports that the aforementioned desirable statistical property of $\MM(\GG)$ is not determined locally by the structure of $\MM(\GG)$ within the probability simplex, but instead it is intrinsic to the global algebraic structure defining $\MM(\GG)$ in the ambient parameter space $\C^{|\RR|}$.  
As mentioned in the introduction, it is then natural to investigate which models among a family generalizing discrete DAG models possess the toric property, as such models may also inherit nice properties in regard to algorithms for exact inference.

In the following, we let $\tilde{\GG}$ denote the \emph{skeleton} of $\GG$; i.e.,~the underlying undirected graph of $\GG$.
The ring map associated to the
clique factorization of the undirected
model determined by $\tilde{\GG}$ is
denoted by  $\Phi_{\tilde{\GG}}$ \cite[Proposition 13.2.5.]{S19}.
\begin{theorem}
\cite[Theorem 4.4]{GMS06}
\label{thm: perfect equals toric}
Let $\GG$ be a DAG. 
The following are equivalent:
\begin{itemize}
	\item[$(i)$] $\GG$ is a perfect DAG.
	\item[$(ii)$]  The generators of $I_{\glo(\tilde\GG)}$ are a Gr\"obner basis and $\ker(\Phi_{\tilde\GG})= I_{\glo(\tilde \GG)}$, where $\ker(\Phi_{\tilde{\GG}})$ is the defining ideal of the undirected graphical model associated to $\tilde \GG$, and 
\end{itemize}
Under the above conditions, $\ker(\Psi_{\TT_{\GG}})$ is a toric ideal and $\ker(\Psi_{\TT_{\GG}})= I_{\glo( \GG)}$.
\end{theorem}

\begin{proof}
The equivalence $(i) \Longleftrightarrow (ii)$ is Theorem 4.4 in \cite{GMS06}. 
For the last conclusion, note that since $\GG$ is perfect, the set of global directed
statements for $\GG$ is equal to the set of global undirected statements for $\tilde{\GG}$. Thus
$I_{\glo(\GG)}=I_{\glo(\tilde \GG)}$. By the same hypothesis and condition $(ii)$,
$\ker(\Phi_{\tilde\GG})=I_{\glo{(\tilde\GG})}$. Hence $I_{\glo(\GG)}$ is a prime and toric ideal. By Theorem~\ref{thm:sat}, $I_{\glo{(\GG})}\subset \ker(\Psi_{\TT_{\GG}})$, and they are equal after saturation. 
However, two prime ideals that are equal after saturation must be equal. 
Hence, $\ker(\Psi_{\TT_{\GG}})=I_{\glo{(\GG})}$. 
\end{proof}

\subsection{Balanced staged trees}
\label{subsec: balanced models}
 Discrete DAG models
whose underlying DAG is perfect are toric varieties, similarly
staged tree models that are balanced are toric varieties. These two properties coincide for
the class of models in $\TT_{\mathbb{G}}$.
In this subsection we define balanced staged trees and state the theorem that relates this property to the defining equations of
$\ker(\Psi_{\TT})$.

Let $\TT = (V,E)$ be a staged tree. 
For a fixed node $v\in V$, $\TT_v$ denotes the rooted subtree of $\TT$ rooted at node $v$.  
Then $\TT_v$ is a staged tree with labeling induced by $\theta$.  
We  denote the set of root-to-leaf paths in $\TT_{v}$ by $\Lambda_v$. The \emph{interpolating 
polynomial of $\TT_v$} is
\[
t(v) := \sum_{\lambda\in\Lambda_v}\prod_{e\in E(\lambda)}\theta(e)
\]
where $E(\lambda)$ is the set
of edges in $\lambda$. The polynomial $t(v)$ is an element of $\R[\Theta_{\TT}]$.
When $v$ is the root of $\TT$,  $t(v)$ is called the \emph{interpolating polynomial of $\TT$}. 
Interpolating polynomials are useful to capture symmetries of subtrees of $(\TT,\theta)$.
\begin{definition}
\label{def: balanced}
Let $(\TT,\theta)$ be a staged tree and $v,w\in V$ be two vertices in the same stage with children $\ch_\TT(v) = \{v_0,\ldots, v_k\}$ and $\ch_\TT(w) :=\{w_0,\ldots, w_k\}$, respectively.  
After a possible reindexing, we may assume that $\theta(v\rightarrow v_i) = \theta(w\rightarrow w_i)$ for all $i\in [k]_0$.  
The pair of vertices $v, w$ is \emph{balanced} if
\[
t(v_i)t(w_j) = t(w_i)t(v_j) \text{ in $\R[\Theta_\TT]$ for all $i\neq j\in[k]_0$.}
\]
The staged tree $(\TT,\theta)$ is called \emph{balanced} if every pair of vertices in the same stage is balanced.  
\end{definition}
 
\begin{theorem}
\label{thm: balanced iff equal}
\cite[Theorem 10]{DG20}
A staged tree $(\TT,\theta)$ is balanced if and only if $\ker(\Psi_\TT^{\toric})=\ker(\Psi_\TT)$.
\end{theorem}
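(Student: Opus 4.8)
The plan is to prove the biconditional in Theorem~\ref{thm: balanced iff equal} by analyzing how the two kernels $\ker(\Psi_\TT^{\toric})$ and $\ker(\Psi_\TT)$ differ, which amounts to understanding the effect of imposing the sum-to-one relations $\mathfrak{q}_\TT$. Recall that $\Psi_\TT = \pi\circ\Psi_\TT^{\toric}$, where $\pi$ is the projection modulo $\mathfrak{q}_\TT$, so $\ker(\Psi_\TT^{\toric})\subset\ker(\Psi_\TT)$ always. The task is to show that the reverse containment holds exactly when $(\TT,\theta)$ is balanced. The natural bridge between the balanced condition and these ideals is the interpolating polynomial: I would first record that for any node $v$, the image $\Psi_\TT^{\toric}(p_{[v]})$ equals $z\cdot\bigl(\prod_{e\in E(\lambda(v))}\theta(e)\bigr)\cdot t(v)$, since summing the leaf-monomials over all leaves below $v$ factors as the path-to-$v$ monomial times the interpolating polynomial $t(v)$ of the subtree $\TT_v$. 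This identity is the computational heart of the argument and connects the generators $p_{[v']}p_{[w]}-p_{[w']}p_{[v]}$ of $I_{\MM(\TT)}$ directly to the balanced equations $t(v_i)t(w_j)=t(w_i)t(v_j)$.

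For the forward direction (\emph{balanced} $\Rightarrow$ equality of kernels), I would argue that when $\TT$ is balanced, the model-invariant generators already lie in the toric kernel: applying $\Psi_\TT^{\toric}$ to a generator $p_{[v']}p_{[w]}-p_{[w']}p_{[v]}$ and factoring out the common path-monomials reduces, via the identity above, to a multiple of $t(v')t(w)-t(w')t(v)$, which vanishes precisely by the balanced hypothesis. Hence $I_{\MM(\TT)}\subset\ker(\Psi_\TT^{\toric})$. Combining this with Theorem~\ref{thm: sat staged tree}, which gives $\ker(\Psi_\TT)=(I_{\MM(\TT)}:\mathbf{p}^\infty)$, and with the fact that the toric kernel is prime and hence saturated with respect to $\mathbf{p}=\prod_v p_{[v]}$ (each $p_{[v]}$ maps to a nonzerodivisor, a nonzero monomial-times-polynomial in the domain $\R[\Theta]_\TT$), I would conclude $\ker(\Psi_\TT)=(I_{\MM(\TT)}:\mathbf{p}^\infty)\subset\ker(\Psi_\TT^{\toric})$, giving equality.

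For the converse (equality $\Rightarrow$ \emph{balanced}), I would prove the contrapositive: if some pair $v,w$ in a common stage fails to be balanced, then there exist $i\neq j$ with $t(v_i)t(w_j)\neq t(w_i)t(v_j)$ in $\R[\Theta_\TT]$, and I would exhibit an explicit element of $\ker(\Psi_\TT)\setminus\ker(\Psi_\TT^{\toric})$. The candidate is again built from the model-invariant binomial $p_{[v_i]}p_{[w_j]}-p_{[w_i]}p_{[v_j]}$ (which lies in $\ker(\Psi_\TT)$ by Lemma~\ref{lem:invariantEquations}, part $(2)$, since $v,w$ share a stage): its image under $\Psi_\TT^{\toric}$ is the common monomial times $\bigl(t(v_i)t(w_j)-t(w_i)t(v_j)\bigr)$, which is nonzero in $\R[\Theta]_\TT$ precisely because the balanced equation fails, so this binomial is not in $\ker(\Psi_\TT^{\toric})$.

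The main obstacle I anticipate is the converse direction, specifically ensuring that failure of a single balanced equation in $\R[\Theta_\TT]$ genuinely produces a nonzero image in the \emph{polynomial} ring $\R[\Theta]_\TT$ before projecting, and that this element is honestly in $\ker(\Psi_\TT)$ but not merely up to the sum-to-one relations. One must check that distinct interpolating-polynomial products remain distinguishable without invoking $\mathfrak{q}_\TT$, so that the witness binomial detects the toric kernel faithfully; this requires a careful bookkeeping of which monomials in $\LL$ appear, using that the labels on edges out of a single node are distinct (condition $(1)$ of Definition~\ref{def: staged tree}) and that stages are either equal or disjoint. I would lean on the structure of the parameterization to guarantee that the nonvanishing of $t(v_i)t(w_j)-t(w_i)t(v_j)$ in $\R[\Theta_\TT]$ is equivalent to its nonvanishing as an element of the image ring, closing the gap between the geometric balanced condition and the algebraic statement about kernels.
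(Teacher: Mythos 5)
Your proposal is correct, and it is essentially the argument this paper relies on: the theorem itself is only cited from \cite{DG20}, but the paper's own proof of its generalization to interventional staged trees (Theorem~\ref{thm: coincidence}) runs exactly along your lines --- establish a chain $J\subset\ker(\Psi_\TT^{\toric})\subset\ker(\Psi_\TT)$ for a suitable binomial ideal $J$, then use that both kernels are prime and coincide after saturating (equivalently, localizing) at $\mathbf{p}=\prod_{v}p_{[v]}$. The only real difference is the choice of $J$: the paper routes through the auxiliary ideal $I_{\mathrm{paths}}$ of \cite{DG20}, whose generators are precisely your child--child binomials $p_{[v_i]}p_{[w_j]}-p_{[w_i]}p_{[v_j]}$, whereas you work with the model invariants $I_{\MM(\TT)}$ directly; both hinge on the identity $\Psi_\TT^{\toric}(p_{[v]})=z\cdot\bigl(\prod_{e\in E(\lambda(v))}\theta(e)\bigr)\cdot t(v)$, and for the converse on the fact that every interpolating polynomial is congruent to $1$ modulo $\mathfrak{q}_\TT$, which is what makes your witness lie in $\ker(\Psi_\TT)$. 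Two small points to spell out. First, the generators of $I_{\MM(\TT)}$ map to monomial multiples of $t(v)t(w_j)-t(w)t(v_j)$, which involves the \emph{parents'} interpolating polynomials, while Definition~\ref{def: balanced} only asserts $t(v_i)t(w_j)=t(w_i)t(v_j)$ for pairs of children; you need the one-line expansion $t(v)=\sum_i\theta(v\to v_i)t(v_i)$ and $t(w)=\sum_i\theta(w\to w_i)t(w_i)$, with matching labels, to deduce the former from the latter. Second, Lemma~\ref{lem:invariantEquations}(2) gives only the parent--child binomials, so membership of your child--child witness in $\ker(\Psi_\TT)$ is best verified by the direct computation modulo $\mathfrak{q}_\TT$ rather than by citing that lemma; once that is done, the nonvanishing of its toric image is immediate because the balanced condition is by definition an identity in the polynomial ring, so your anticipated obstacle in the converse direction does not actually arise.
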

It follows from Theorem~\ref{thm: balanced iff equal}
 that if $\TT_\GG$ is balanced and $\GG$ is perfect, then $\ker(\Psi_\TT^{\toric})=
I_{\glo(\GG)}$. 
In \cite[Theorem 3.1]{DS21} it was established that $\TT_{\GG}$ is balanced if and only if $\GG$ is perfect.
Hence, the family of balanced staged tree models constitutes a natural generalization of decomposable models, from both a combinatorial and an algebraic viewpoint.

\begin{theorem}
\cite{DS21}
\label{thm: perfectly balanced and stratified}
Let $\GG = ([p],E)$ be a DAG. Then the following are equivalent: 
\begin{enumerate}
\item The staged tree $\TT_\GG$ is balanced, 
\item $\GG$ is a perfect DAG, and 
\item $\ker(\Psi_{\TT_{\GG}}^{\toric})=\ker(\Psi_{\TT_{\GG}})$.   
\end{enumerate}
\end{theorem}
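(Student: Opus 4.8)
The strategy is to prove the chain of implications $(2)\Rightarrow(3)\Rightarrow(1)\Rightarrow(2)$, using the prior results to handle most of the work and reserving the genuinely new combinatorial content for the implication $(1)\Rightarrow(2)$. First I would establish $(2)\Rightarrow(3)$. Assuming $\GG$ is perfect, Theorem~\ref{thm: perfect equals toric} directly yields $\ker(\Phi_\GG)=I_{\glo(\GG)}$ and that this ideal is toric. Since Remark~\ref{rmk: homogenizing parameter} identifies $\ker(\Phi_\GG)$ with $\ker(\Psi_{\TT_\GG})$, it suffices to show $\ker(\Psi_{\TT_\GG}^{\toric})=\ker(\Psi_{\TT_\GG})$; by Theorem~\ref{thm: balanced iff equal} this is equivalent to $\TT_\GG$ being balanced, so in fact $(2)\Rightarrow(3)$ and $(2)\Rightarrow(1)$ can be packaged together once balancedness is known. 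The cleanest route is therefore to prove $(2)\Rightarrow(1)$ and $(1)\Rightarrow(3)$ separately, with $(3)\Rightarrow(1)$ immediate from Theorem~\ref{thm: balanced iff equal} and Remark~\ref{rmk: homogenizing parameter}.

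For the implication $(1)\Rightarrow(3)$, suppose $\TT_\GG$ is balanced. By Theorem~\ref{thm: balanced iff equal} we immediately get $\ker(\Psi_{\TT_\GG}^{\toric})=\ker(\Psi_{\TT_\GG})$, and by Remark~\ref{rmk: homogenizing parameter} the latter equals $\ker(\Phi_\GG)$, which gives the displayed triple equality in $(3)$. Thus the equivalence of $(1)$ and $(3)$ is essentially formal given the cited results, and the real mathematical content lies in tying the balanced condition to perfectness of $\GG$.

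The heart of the argument is $(1)\Longleftrightarrow(2)$, and I expect the implication $(1)\Rightarrow(2)$ to be the main obstacle. The plan is to work with the explicit description of the stages of $\TT_\GG$ from Proposition~\ref{prop: characterization DAG staged trees}: at each level $L_k=\RR_{[k]}$, two nodes $\xx,\xx'$ lie in the same stage exactly when their restrictions to $\pa_\GG(k+1)\cap[k]$ agree. For the forward direction I would argue contrapositively: if $\GG$ is not perfect, there is a node $j$ with two parents $a,b$ that are nonadjacent in $\GG$, and I would exhibit a specific pair of same-stage vertices $v,w$ whose interpolating polynomials $t(v_i),t(w_j)$ fail the balancing identity $t(v_i)t(w_j)=t(w_i)t(v_j)$. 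The interpolating polynomial $t(v)$ of a subtree below a node $v\sim x_1\cdots x_k$ factors, by the recursive structure of $\TT_\GG$ and Lemma~\ref{lem: parameter interpretation}, into a product of sums of conditional-probability labels indexed by the variables at levels $k+1,\ldots,p$; the balancing identity then becomes a polynomial identity among these products. The nonadjacency of $a,b$ forces the conditional labels at level $j$ to depend on the differing coordinates in a way that obstructs the required factorization, breaking the identity. Conversely, for $(2)\Rightarrow(1)$, when $\GG$ is perfect every parent set is complete, so along each root-to-leaf path the conditioning sets nest compatibly; I would show that $t(v_i)$ and $t(v_j)$ for same-stage $v$ share a common factor structure making $t(v_i)t(w_j)=t(w_i)t(v_j)$ hold identically in $\R[\Theta_\TT]$, typically by induction on the number of levels below the stage in question, peeling off one level at a time and using that completeness of parent sets keeps the relevant label blocks aligned. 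The main difficulty throughout is controlling how the interpolating polynomials factor as formal polynomials in the label indeterminates rather than merely as functions on the parameter space, since the balanced condition is an identity in $\R[\Theta_\TT]$; keeping careful track of which labels are shared across stages (via compatible labeling) and which are genuinely distinct is where the combinatorics of perfectness must be used most delicately.
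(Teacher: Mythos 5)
Your high-level architecture matches the paper's: the equivalence of $(1)$ and $(3)$ is indeed formal from Theorem~\ref{thm: balanced iff equal} together with the identification $\ker(\Psi_{\TT_\GG})=\ker(\Phi_\GG)$, and the real content is $(1)\Longleftrightarrow(2)$, with $(1)\Rightarrow(2)$ proved contrapositively by locating an immorality $i\to l\leftarrow j$ and exhibiting a same-stage pair that violates balance. That part of your plan is sound and is essentially what the paper does.

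The gap is in $(2)\Rightarrow(1)$. You assert that $t(v)$ for $v\sim x_1\cdots x_k$ ``factors \ldots into a product of sums of conditional-probability labels indexed by the variables at levels $k+1,\ldots,p$,'' and you propose an induction that peels off one level at a time. As a formal polynomial in $\R[\Theta_\TT]$ this factorization is false in general: $t(v)=\sum_{x_{k+1}\cdots x_p}\prod_{m>k}f(x_m\mid(\cdot)_{\pa_\GG(m)})$ is a sum over root-to-leaf paths, and because the label at level $m$ depends on coordinates arbitrarily far above level $m$, the sum does not split as a product of per-level sums (it telescopes to $1$ only as a function on $\Theta_\TT$, after imposing the sum-to-one relations, which is precisely not allowed here). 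Since both sides of $t(v_i)t(w_j)=t(w_i)t(v_j)$ are sums of degree-$2(p-i-1)$ monomials with coefficient $1$, the identity holds if and only if there is a bijection matching monomials on the two sides (this is the content of Lemma~\ref{lem: simplified balanced} in the appendix). The paper constructs this bijection explicitly: given a pair of outcome sequences, swap the coordinates at indices that are \emph{not} descendants of $i+1$ and keep fixed those that are. Verifying that this matching works requires two graph-theoretic consequences of perfectness that your plan does not supply: first, that same-stage vertices agree on $\pa_\GG(i+1)$ and that chordality forces $\pa_\GG(k)\cap[i]\subset\pa_\GG(i+1)$ for every $k$ with $i+1\in\an_\GG(k)$; second, that $\an_\GG(k)\cap([p]\setminus[i+1])\subset\de_\GG(i+1)$, again using perfectness. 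Without identifying the bijection and these two lemmas, the phrase ``completeness of parent sets keeps the relevant label blocks aligned'' does not yet constitute an argument, and a naive level-by-level induction will stall exactly where a label at level $k$ references a coordinate below level $i+1$ on which $v$ and $w$ disagree.
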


\begin{proof}
The equivalence of $(1)$ and $(3)$ is established by \cite[Theorem 10]{DG20}. 
The equivalence of $(1)$ and $(2)$ is established in \cite{DS21}.

\end{proof}

The staged tree depicted in Figure~\ref{fig: balanced not DAG} is a balanced staged tree. 
For simplicity, we omitted the edge labels in this figure.
However, it can be seen via Proposition~\ref{prop: characterization DAG staged trees} that it is not in the family $\TT_{\mathbb{G}}$.  
Hence, balanced staged trees are a nontrivial generalization of decomposable graphical models to the more general family of staged tree models.
One may then ask, as outlined above, whether or not the nice properties of perfect DAGs in regards to exact inference algorithms, extend naturally to the family of balanced staged trees.  
We leave this statistical question for future work, and continue with the development and description of toric interventional staged trees (for which the same question may then also be asked). 
This generalization to balanced staged trees will also play a role in Section~\ref{sec: interventional staged tree models} when we generalize Theorem~\ref{thm: perfect equals toric} to interventional DAG models.  We finish this section with a detailed example of the defining equations a DAG model.

	\begin{figure}
	\centering

\begin{tikzpicture}[thick,scale=0.3]
	
 	 \node[circle, draw, fill=black!0, inner sep=2pt, minimum width=2pt] (w3) at (0,0)  {};
 	 \node[circle, draw, fill=black!0, inner sep=2pt, minimum width=2pt] (w4) at (0,-1) {};
 	 \node[circle, draw, fill=black!0, inner sep=2pt, minimum width=2pt] (w5) at (0,-2) {};
 	 \node[circle, draw, fill=black!0, inner sep=2pt, minimum width=2pt] (w6) at (0,-3) {};
	 
 	 \node[circle, draw, fill=black!0, inner sep=2pt, minimum width=2pt] (v3) at (0,-4)  {};
 	 \node[circle, draw, fill=black!0, inner sep=2pt, minimum width=2pt] (v4) at (0,-5) {};
 	 \node[circle, draw, fill=black!0, inner sep=2pt, minimum width=2pt] (v5) at (0,-6) {};
 	 \node[circle, draw, fill=black!0, inner sep=2pt, minimum width=2pt] (v6) at (0,-7) {};

	 \node[circle, draw, fill=blue!40, inner sep=2pt, minimum width=2pt] (w1) at (-4,-.5) {};
 	 \node[circle, draw, fill=orange!90, inner sep=2pt, minimum width=2pt] (w2) at (-4,-2.5) {}; 

 	 \node[circle, draw, fill=green!60, inner sep=2pt, minimum width=2pt] (v1) at (-4,-4.5) {};
 	 \node[circle, draw, fill=green!60, inner sep=2pt, minimum width=2pt] (v2) at (-4,-6.5) {};

 	 \node[circle, draw, fill=red!0, inner sep=2pt, minimum width=2pt] (w) at (-8,-1.5) {};

 	 \node[circle, draw, fill=yellow!90, inner sep=2pt, minimum width=2pt] (v) at (-8,-5.5) {};

 	 \node[circle, draw, fill=lime!0, inner sep=2pt, minimum width=2pt] (r) at (-12,-3.5) {};

 	 \node[circle, draw, fill=black!0, inner sep=2pt, minimum width=2pt] (w3i) at (0,-8)  {};
 	 \node[circle, draw, fill=black!0, inner sep=2pt, minimum width=2pt] (w4i) at (0,-9) {};
 	 \node[circle, draw, fill=black!0, inner sep=2pt, minimum width=2pt] (w5i) at (0,-10) {};
 	 \node[circle, draw, fill=black!0, inner sep=2pt, minimum width=2pt] (w6i) at (0,-11) {};
	 
 	 \node[circle, draw, fill=black!0, inner sep=2pt, minimum width=2pt] (v3i) at (0,-12)  {};
 	 \node[circle, draw, fill=black!0, inner sep=2pt, minimum width=2pt] (v4i) at (0,-13) {};
 	 \node[circle, draw, fill=black!0, inner sep=2pt, minimum width=2pt] (v5i) at (0,-14) {};
 	 \node[circle, draw, fill=black!0, inner sep=2pt, minimum width=2pt] (v6i) at (0,-15) {};

	 \node[circle, draw, fill=blue!40, inner sep=2pt, minimum width=2pt] (w1i) at (-4,-8.5) {};
 	 \node[circle, draw, fill=orange!90, inner sep=2pt, minimum width=2pt] (w2i) at (-4,-10.5) {};

 	 \node[circle, draw, fill=green!60, inner sep=2pt, minimum width=2pt] (v1i) at (-4,-12.5) {};
 	 \node[circle, draw, fill=green!60, inner sep=2pt, minimum width=2pt] (v2i) at (-4,-14.5) {};

 	 \node[circle, draw, fill=cyan!0, inner sep=2pt, minimum width=2pt] (wi) at (-8,-9.5) {};

 	 \node[circle, draw, fill=yellow!90, inner sep=2pt, minimum width=2pt] (vi) at (-8,-13.5) {};

 	 \node[circle, draw, fill=violet!00, inner sep=2pt, minimum width=2pt] (ri) at (-12,-11.5) {};

 	 \node[circle, draw, fill=black!0, inner sep=2pt, minimum width=2pt] (I) at (-16,-7.5) {};

 	 \draw[->]   (I) --    (r) ;
 	 \draw[->]   (I) --   (ri) ;

 	 \draw[->]   (r) --   (w) ;
 	 \draw[->]   (r) --   (v) ;

 	 \draw[->]   (w) --  (w1) ;
 	 \draw[->]   (w) --  (w2) ;

 	 \draw[->]   (w1) --   (w3) ;
 	 \draw[->]   (w1) --   (w4) ;
 	 \draw[->]   (w2) --  (w5) ;
 	 \draw[->]   (w2) --  (w6) ;

 	 \draw[->]   (v) --  (v1) ;
 	 \draw[->]   (v) --  (v2) ;

 	 \draw[->]   (v1) --  (v3) ;
 	 \draw[->]   (v1) --  (v4) ;
 	 \draw[->]   (v2) --  (v5) ;
 	 \draw[->]   (v2) --  (v6) ;

 	 \draw[->]   (ri) --   (wi) ;
 	 \draw[->]   (ri) -- (vi) ;

 	 \draw[->]   (wi) --  (w1i) ;
 	 \draw[->]   (wi) --  (w2i) ;

 	 \draw[->]   (w1i) --  (w3i) ;
 	 \draw[->]   (w1i) -- (w4i) ;
 	 \draw[->]   (w2i) --  (w5i) ;
 	 \draw[->]   (w2i) --  (w6i) ;

 	 \draw[->]   (vi) --  (v1i) ;
 	 \draw[->]   (vi) --  (v2i) ;

 	 \draw[->]   (v1i) --  (v3i) ;
 	 \draw[->]   (v1i) -- (v4i) ;
 	 \draw[->]   (v2i) -- (v5i) ;
 	 \draw[->]   (v2i) --  (v6i) ;

\end{tikzpicture}
	\vspace{-0.2cm}
	\caption{A balanced staged tree that is not in the family $\TT_{\mathbb{G}}$.}
	\label{fig: balanced not DAG}
	\end{figure}
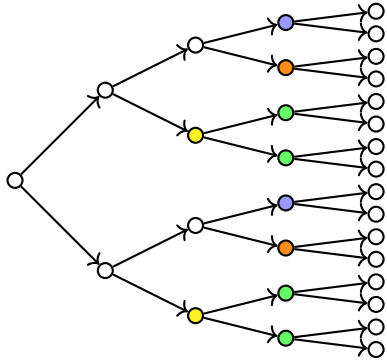

\begin{example}
To illustrate how the ideals $\ker(\Psi_{\TT_{\GG}})$ and $ \ker(\Psi_{\TT_{\GG}}^{\toric})$ 
 differ from each other, we consider the four-cycle DAG $\GG$ depicted in Figure~\ref{fig:4cycle}.
The staged tree representation $\TT_\GG$
for binary random variables is placed to the left of $\GG$. From $\TT_{\GG}$ in the figure, we
see that the nodes in red or blue are not balanced. Thus $\ker(\Psi_{\TT_{\GG}}^{\toric})
\subsetneq \ker(\Psi_{\TT_\GG})$. 
The latter has thirteen binomial generators, four of degree two and nine
of degree four, and two non-binomial generators of degree two. 
The ideal $\ker(\Psi_{\TT_{\GG}}^{\toric})$ has twenty binomial generators, four of degree two and the rest of degree four. 
The ideal  $\ker(\Phi_{\GG^{m}})$ associated to the clique factorization of the \emph{moralization} $\GG^{m}$ (see \cite{L96} and \cite[Proposition 13.2.5.]{S19}) is contained in $\ker(\Psi_{\TT_{\GG}}^{\toric})$ and hence 
in $\ker(\Psi_{\TT_{\GG}})$.  
It is generated by four quadratic binomials corresponding to the conditional independence relation ${X_1\independent  X_4| X_2,X_3}$.
The ideal $\ker(\Phi_{\tilde{\GG}})$ associated to the clique factorization of the undirected graph
$\tilde{\GG}$ is not contained in any of the previous ideals. In fact, undirected graphical models
whose underlying graph is not decomposable cannot be in general represented by staged trees.
A complete list of defining ideals of DAG models on four
binary random variables is presented in \cite[Table 1]{GSS05}, the four cycle is row 15. 
\end{example}

	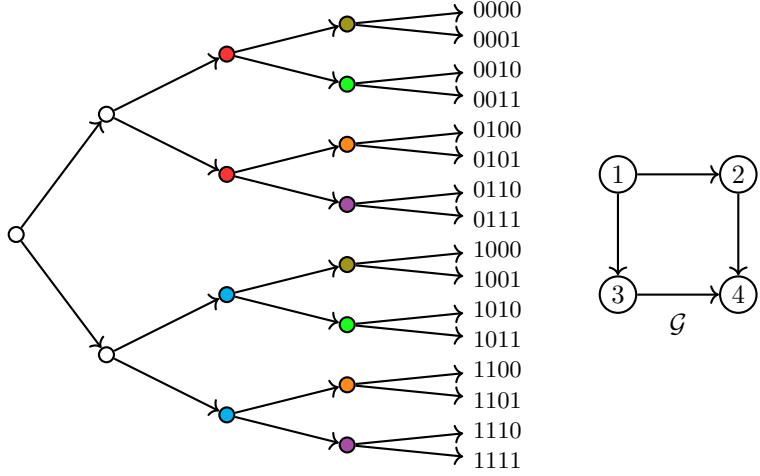
\begin{figure}
	\centering

\begin{tikzpicture}[thick,scale=0.2]
	
	\node (w3) at (22,15)  {\small$0000$};
 	 \node(w4) at (22,13) {\small$0001$};
 	 \node(w5) at (22,11) {\small$0010$};
 	 \node (w6) at (22,9) {\small$0011$};
 	 \node (v3) at (22,7)  {\small$0100$};
 	 \node (v4) at (22,5) {\small$0101$};
 	 \node (v5) at (22,3) {\small$0110$};
 	 \node (v6) at (22,1) {\small$0111$};
 	 \node (w3i) at (22,-1)  {\small$1000$};
 	 \node (w4i) at (22,-3) {\small$1001$};
 	 \node (w5i) at (22,-5) {\small$1010$};
 	 \node(w6i) at (22,-7) {\small$1011$};
	 
 	 \node (v3i) at (22,-9)  {\small$1100$};
 	 \node (v4i) at (22,-11) {\small$1101$};
 	 \node(v5i) at (22,-13) {\small$1110$};
 	 \node (v6i) at (22,-15) {\small$1111$};
 	 
	 

	 \node[circle, draw, fill=olive!90, inner sep=2pt, minimum width=2pt] (w1) at (12,14) {};
 	 \node[circle, draw, fill=green!90, inner sep=2pt, minimum width=2pt] (w2) at (12,10) {}; 

 	 \node[circle, draw, fill=orange!90, inner sep=2pt, minimum width=2pt] (v1) at (12,6) {};
 	 \node[circle, draw, fill=violet!70, inner sep=2pt, minimum width=2pt] (v2) at (12,2) {};	 
 	 \node[circle, draw, fill=olive!90, inner sep=2pt, minimum width=2pt] (w1i) at (12,-2) {};
 	 \node[circle, draw, fill=green!90, inner sep=2pt, minimum width=2pt] (w2i) at (12,-6) {};

 	 \node[circle, draw, fill=orange!90, inner sep=2pt, minimum width=2pt] (v1i) at (12,-10) {};
 	 \node[circle, draw, fill=violet!70, inner sep=2pt, minimum width=2pt] (v2i) at (12,-14) {};

 	 \node[circle, draw, fill=red!80, inner sep=2pt, minimum width=2pt] (w) at (4,12) {};
 	 \node[circle, draw, fill=red!80, inner sep=2pt, minimum width=2pt] (v) at (4,4) {};
 	 \node[circle, draw, fill=cyan!90, inner sep=2pt, minimum width=2pt] (wi) at (4,-4) {};
 	 \node[circle, draw, fill=cyan!90, inner sep=2pt, minimum width=2pt] (vi) at (4,-12) {};

 	 \node[circle, draw, fill=lime!0, inner sep=2pt, minimum width=2pt] (r) at (-4,8) {};
      \node[circle, draw, fill=blue!0, inner sep=2pt, minimum width=2pt] (ri) at (-4,-8) {};

 	 \node[circle, draw, fill=black!0, inner sep=2pt, minimum width=2pt] (I) at (-10,0) {};

 	 \draw[->]   (I) --    (r) ;
 	 \draw[->]   (I) --   (ri) ;

 	 \draw[->]   (r) --   (w) ;
 	 \draw[->]   (r) --   (v) ;

 	 \draw[->]   (w) --  (w1) ;
 	 \draw[->]   (w) --  (w2) ;

 	 \draw[->]   (w1) --   (w3) ;
 	 \draw[->]   (w1) --   (w4) ;
 	 \draw[->]   (w2) --  (w5) ;
 	 \draw[->]   (w2) --  (w6) ;

 	 \draw[->]   (v) --  (v1) ;
 	 \draw[->]   (v) --  (v2) ;

 	 \draw[->]   (v1) --  (v3) ;
 	 \draw[->]   (v1) --  (v4) ;
 	 \draw[->]   (v2) --  (v5) ;
 	 \draw[->]   (v2) --  (v6) ;

 	 \draw[->]   (ri) --   (wi) ;
 	 \draw[->]   (ri) -- (vi) ;

 	 \draw[->]   (wi) --  (w1i) ;
 	 \draw[->]   (wi) --  (w2i) ;

 	 \draw[->]   (w1i) --  (w3i) ;
 	 \draw[->]   (w1i) -- (w4i) ;
 	 \draw[->]   (w2i) --  (w5i) ;
 	 \draw[->]   (w2i) --  (w6i) ;

 	 \draw[->]   (vi) --  (v1i) ;
 	 \draw[->]   (vi) --  (v2i) ;

 	 \draw[->]   (v1i) --  (v3i) ;
 	 \draw[->]   (v1i) -- (v4i) ;
 	 \draw[->]   (v2i) -- (v5i) ;
 	 \draw[->]   (v2i) --  (v6i) ;

 \node[circle, draw, fill=cyan!0, inner sep=2pt, minimum width=2pt] (1) at (30,4) {$1$};
 \node[circle, draw, fill=cyan!0, inner sep=2pt, minimum width=2pt] (2) at (38,4) {$2$};
 \node[circle, draw, fill=cyan!0, inner sep=2pt, minimum width=2pt] (3) at (30,-4) {$3$};
\node[circle, draw, fill=cyan!0, inner sep=2pt, minimum width=2pt] (4) at (38,-4) {$4$};
\node (G) at (34,-6) {$\GG$};
\draw[->] (1) -- (2);
\draw[->] (1) -- (3);
\draw[->] (2) -- (4);
\draw[->] (3) -- (4);
\end{tikzpicture}
	\vspace{-0.2cm}
	\caption{The four cycle and its staged tree representation for binary variables with linear extension $\pi=1234$.}
	\label{fig:4cycle}
	\end{figure}


\section{Interventional DAG Models}
\label{sec: interventional DAG models}

In the following sections, we extend the existing algebraic theory for staged tree models and discrete DAG models discussed in Section~\ref{sec: staged tree models} to discrete interventional  models which are the foundation for randomized controlled trials (A/B-testing) and the basis for modeling causation in modern machine learning.  Let $(X_1,\ldots, X_p)$ be jointly distributed random variables, let $\prob$ denote their joint distribution, and let $f$ denote the associated probability mass function.  
Assuming that $\prob$ is Markov to a DAG $\GG = ([p],E)$, we have that  
\[
f(x) = \prod_{j\in[p]}f(x_j \mid \xx_{\pa_\GG(j)}) \qquad \mbox{for all $\xx\in\RR$}.
\]
An interventional probability mass function with respect to $f$ is produced by changing the factors $f(x_j \mid \xx_{\pa_\GG(j)})$ associated to some of the nodes $j$ in $\GG$.  
We call a subset of nodes $I\subset [p]$ an \emph{intervention target} when we intend to change the conditional factors associated to these nodes.  
\begin{definition}
\label{def: interventional distribution}
Let $I\subset[p]$ be a collection of intervention targets.  
A distribution $\prob^{(I)}$ and the associated probability mass function $f^{(I)}$ are called, respectively, an \emph{interventional distribution} and \emph{interventional probability mass function}, with respect to $I$ and a distribution $\prob$ with probability mass function $f$ Markov to a DAG $\GG = ([p],E)$ if 
\[
f^{(I)}(\xx) = \prod_{j\in I}f^{(I)}(x_j \mid \xx_{\pa_\GG(j)})\prod_{j\notin I}f(x_j \mid \xx_{\pa_\GG(j)}).
\]
In this case, the distribution $\prob$ and probability mass function $f$ are called the \emph{observational distribution} and \emph{observational probability mass function}, respectively.
\end{definition}

In the case that the conditional distributions $f^{(I)}(x_j \mid \xx_{\pa_\GG(j)})$ eliminate the dependencies between $X_k$ and its parents $X_{\pa_\GG(k)}$, the intervention is called \emph{perfect} or \emph{hard} \cite{EGS05}.  
Otherwise, the intervention is called \emph{soft}.  
The term \emph{general intervention} refers to an intervention that is either hard or soft.  
Notice that the key feature of an interventional probability mass function is the invariance of the conditional factors corresponding to variables not in the intervention target, relative to the observational probability mass function.  
This invariance is precisely what allows one to distinguish causal implications within the system when the observational and interventional distributions are compared \cite{P09}.
Since the observational distribution arises from not targeting any nodes for intervention, it is often denoted by $f^{(\emptyset)}$.  
In practice, more than one interventional experiment is studied, so data are typically drawn from the observational distribution and a family of interventional distributions.
Given a (multi)set $\I$ of intervention targets for which $\emptyset\in\I$, a sequence of densities $(f^{(I)})_{I\in\I}$ is called an \emph{interventional setting} if $f^{(I)}$ is an interventional probability mass function with respect to $I$ and $f^{(\emptyset)}$ for all $I\in\I$.

In \cite{YKU18}, the authors defined the family of all interventional settings that can arise via intervention with respect to $\I$ and a distribution Markov to a DAG $\GG$ as
\begin{equation*}
\begin{split}
\MM_\I(\GG) := \{
(f^{(I)})_{I\in\I} |
\forall& I,J\in\I : f^{(I)}\in\MM(\GG) \text{ and } \\
&f^{(I)}(x_j\mid \xx_{\pa_{\GG}(j)}) = f^{(J)}(x_j\mid \xx_{\pa_{\GG}(j)}) \;\;\forall j\notin I \cup J \}.	\\
\end{split}
\end{equation*}
The family of interventional settings $\MM_\I(\GG)$ is called the \emph{interventional DAG model} for $\GG = ([p],E)$ and $\I$.  
The causal information within an interventional DAG model can be encoded using a DAG as well. 
The \emph{$\I$-DAG} for $\GG$ and $\I$ is the DAG $\GG^{\I} := ([p]\cup W_\I, E\cup E_\I)$, where 
\[
W_\I := \{ w_I | I\in\I\setminus\{\emptyset\}\} 
\quad 
\text{ and } 
\quad
E_\I := \{w_I\rightarrow j \;|\; j\in I,\;\; \forall I\in\I\setminus\{\emptyset\}\}.
\]
The second row in Figure~\ref{fig:3dags} contains three examples of $\I$-DAGs.
The elements of $\MM_\I(\GG)$ are characterized in \cite[Proposition 3.8]{YKU18}  in terms of global Markov properties and invariance properties using $\GG^\I$ via the following definition:
\begin{definition}
\label{def: I-Markov property}
Let $\I$ be a collection of intervention targets with $\emptyset\in\I$.  
Let $(f^{(I)})_{I\in\I}$ be a sequence of (strictly positive) probability mass functions over the vector $(X_1,\ldots, X_p)$.  
Then $(f^{(I)})_{I\in\I}$ satisfies the \emph{$\I$-Markov property} with respect to $\GG$ and $\I$ if 
\begin{enumerate}
	\item $X_A \independent X_B \mid X_C$ for any $I\in \I$ and any disjoint $A,B,C\subset[p]$ for which $C$ d-separates $A$ and $B$ in $\GG$.  
	\item $f^{(I)}(X_A \mid X_C) = f^{(\emptyset)}(X_A \mid X_C)$ for any $I\in \I$ and any disjoint $A,C\subset [p]$ for which $C\cup W_{\I\setminus\{I\}}$ d-separates $A$ and $w_I$ in $\GG^\I$. 
\end{enumerate}
\end{definition}

\begin{theorem}
\label{thm: YKU characterization}
\cite[Proposition 3.8]{YKU18}
Suppose that $\emptyset \in \I$.
Then $(f^{(I)})_{I\in\I}\in\MM_\I(\GG)$ if and only if $(f^{(I)})_{I\in\I}$ satisfies the $\I$-Markov property with respect to $\GG$ and $\I$.
\end{theorem}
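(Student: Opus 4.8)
The plan is to split the biconditional into two matched pairs of conditions and dispatch each using Theorem~\ref{thm: factorization} applied to the appropriate DAG. Membership in $\MM_\I(\GG)$ asks for two things: (M1) each $f^{(I)}$ is Markov to $\GG$, and (M2) the local factors agree, $f^{(I)}(x_j\mid\xx_{\pa_\GG(j)})=f^{(J)}(x_j\mid\xx_{\pa_\GG(j)})$ for all $j\notin I\cup J$. The $\I$-Markov property of Definition~\ref{def: I-Markov property} asks for (IM1) the global Markov property of each $f^{(I)}$ with respect to $\GG$, and (IM2) the conditional invariance $f^{(I)}(X_A\mid X_C)=f^{(\emptyset)}(X_A\mid X_C)$ whenever $C\cup W_{\I\setminus\{I\}}$ d-separates $A$ and $w_I$ in $\GG^\I$. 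Since (M1) and (IM1) are literally the factorization and global Markov characterizations of the same property, their equivalence is immediate from Theorem~\ref{thm: factorization}. All of the content is therefore in the equivalence of the two invariance conditions (M2) and (IM2).

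For the direction (M1),(M2)$\Rightarrow$(IM2), I would pass to an augmented joint distribution on $(X_1,\ldots,X_p)$ together with independent, full-support regime indicators $(W_I)_{I\in\I\setminus\{\emptyset\}}$. I define a positive density $\tilde f$ that factors according to $\GG^\I$: for each node $j$ I set its conditional given $\pa_{\GG^\I}(j)=\pa_\GG(j)\cup\{w_I : j\in I\}$ to equal $f^{(I)}(x_j\mid\xx_{\pa_\GG(j)})$ at the configuration with $W_I$ ``on'' and all other indicators ``off'', to equal $f^{(\emptyset)}(x_j\mid\xx_{\pa_\GG(j)})$ at the all-off configuration, and (say) $f^{(\emptyset)}$ at every remaining configuration; condition (M2) is exactly what makes the first two assignments mutually consistent. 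By construction $\tilde f$ is Markov to $\GG^\I$, so by Theorem~\ref{thm: factorization} it satisfies the global Markov property for $\GG^\I$. Conditioning the indicators to the one-hot configuration for $I$ recovers $f^{(I)}$ and to the all-off configuration recovers $f^{(\emptyset)}$ (here (M1) is used, so that the $X$-part genuinely factors over $\GG$). Now the hypothesis that $C\cup W_{\I\setminus\{I\}}$ d-separates $A$ and $w_I$ yields, via global Markov of $\tilde f$, the statement $X_A\independent W_I\mid X_C,W_{\I\setminus\{I\}}$; evaluating this independence at $W_{\I\setminus\{I\}}$ off and comparing $W_I$ on against $W_I$ off is precisely $f^{(I)}(X_A\mid X_C)=f^{(\emptyset)}(X_A\mid X_C)$, which is (IM2). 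The same device gives (IM1) directly: since the $w$-nodes are sources, conditioning on all of $W_\I$ blocks every path routed through them, so any $C$ that d-separates $A$ and $B$ in $\GG$ also d-separates them in $\GG^\I$ after adjoining $W_\I$.

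For the converse (IM1),(IM2)$\Rightarrow$(M1),(M2), property (M1) is again immediate from (IM1) and Theorem~\ref{thm: factorization}. To obtain (M2) I would specialize (IM2) to $A=\{j\}$ and $C=\pa_\GG(j)$ for a fixed $j\notin I$. The key observation is that $C\cup W_{\I\setminus\{I\}}$ then d-separates $j$ from $w_I$ in $\GG^\I$: since $j\notin I$ the only edges incident to $j$ come from $\GG$, so every path from $w_I$ to $j$ either enters $j$ through a parent in $\pa_\GG(j)$ (a conditioned non-collider, hence blocked) or enters through a child at which it has a collider whose activation would require a descendant of $j$ in the conditioning set; but no descendant of $j$ lies in $\pa_\GG(j)$ (acyclicity) nor in $W_{\I\setminus\{I\}}$ (these are sources), so every such path is blocked. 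Thus (IM2) gives $f^{(I)}(x_j\mid\xx_{\pa_\GG(j)})=f^{(\emptyset)}(x_j\mid\xx_{\pa_\GG(j)})$ for every $j\notin I$, and transitivity through $f^{(\emptyset)}$ yields the full statement (M2) for arbitrary pairs $I,J$ and $j\notin I\cup J$.

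The main obstacle is the forward invariance direction, and specifically the bookkeeping around the regime indicators in $\tilde f$: one must check that the one-hot and all-off conditionals are consistent (this is where positivity and (M2) enter), that nodes belonging to several overlapping targets cause no conflict (resolved by only ever conditioning on one-hot or all-off configurations), and that d-separation statements involving the source nodes $w_I$ translate faithfully into independence and then into equality of conditional densities. Once the augmented distribution is correctly set up, Theorem~\ref{thm: factorization} applied to $\GG^\I$ does all of the real work.
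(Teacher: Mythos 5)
The paper does not prove this statement at all: it is imported verbatim as \cite[Proposition 3.8]{YKU18}, so there is no in-paper argument to compare against. Your proposal is a correct, self-contained reconstruction of the standard proof of that cited result, and it follows essentially the same strategy as \cite{YKU18}: encode the interventional setting as a single positive density $\tilde f$ over $(X_1,\ldots,X_p,W_\I)$ that factorizes according to $\GG^\I$, invoke the equivalence of factorization and the global Markov property (Theorem~\ref{thm: factorization}) on the augmented DAG, and translate d-separations involving the source nodes $w_I$ into invariances of conditionals by fixing the regime indicators to one-hot or all-off configurations. Two places deserve the extra care you only gesture at. First, in the forward direction, the consistency of your assignment when $j\notin I$ (so that $w_I$ is not a parent of $j$ and the ``one-hot for $I$'' configuration collapses to ``all off'' on $\pa_{\GG^\I}(j)$) is exactly condition (M2) with $J=\emptyset$, as you note; one should also record that conditioning $\tilde f$ on the one-hot configuration for $I$ returns $f^{(I)}$ precisely because $f^{(I)}=\prod_{j\in I}f^{(I)}(x_j\mid\xx_{\pa_\GG(j)})\prod_{j\notin I}f^{(\emptyset)}(x_j\mid\xx_{\pa_\GG(j)})$, which uses both (M1) and (M2). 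Second, in the converse, the d-separation of $j$ from $w_I$ by $\pa_\GG(j)\cup W_{\I\setminus\{I\}}$ needs the observation that enlarging the conditioning set beyond $\pa_{\GG^\I}(j)$ by the remaining $w_J$'s cannot open any collider (a $w$-node has no ancestors besides itself), and that on a path leaving $j$ through a child, the collider nearest to $j$ is a descendant of $j$ and hence, by acyclicity and the fact that the $w$-nodes are sources, cannot be an ancestor of the conditioning set. Both points are present in your sketch in compressed form, and the argument goes through; positivity of the densities, which the paper assumes throughout, is what makes all the conditionals well-defined.
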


\subsection{Defining equations of interventional DAG models}
\label{subsec: algebra and geometry of interventional DAG models}
Analogous to the case of discrete DAG models and Theorem~\ref{thm: factorization}, Theorem~\ref{thm: YKU characterization} allows us to define two ideals associated to a DAG $\GG$ and a collection of intervention targets $\I$. 
In the following, we assume that $(X_1,\ldots, X_p)$ is a vector of discrete random variables with outcome space $\RR$ and we consider its joint distribution.
The first ideal is obtained from Definition~\ref{def: interventional distribution} and is analogous to $\ker(\Psi_{\TT_{\GG}})$.
Here, we consider the collection of indeterminates
\[
U_{\GG,\I} := 
\{
q^{(I)}_{j;x;\xx_{\pa_\GG(j)}} :
I\in\I, j\in[p], x\in\RR_{\{j\}}, \xx_{\pa_\GG(j)}\in\RR_{\pa_\GG(j)}
\}
\] and the polynomial ring $\R[U_{\GG,\I}]$.
If we have indexed the elements of $\I$ as $\I = \{I_0, I_1,\ldots, I_K\}$, we may write $q^{(k)}_{j;x;\xx_{\pa_\GG(j)}}$ for $q^{(I_k)}_{j;x;\xx_{\pa_\GG(j)}}$ for $k = 0,1,\ldots, K$. 
In this case, we always assume that $I_0 = \emptyset$.
The indeterminate $q^{(I)}_{j;x;\xx_{\pa_\GG(j)}}$ represents the conditional probability
\[
f^{(I)}(X_j = x \mid X_{\pa_\GG(j)} = \xx_{\pa_\GG(j)}),
\]
which is a factor in the interventional distribution $f^{(I)}$ with respect to $I$ and the probability mass function 
$f^{(\emptyset)}$ (which is assumed to be Markov to $\GG$). 
Therefore, the elements  in $U_{\GG,\I}$ are subject to several sum-to-one conditions of the form
$\sum_{x\in \RR_{\{j\}}}q^{(I)}_{j;x;\xx_{\pa_\GG(j)}}-1=0$ for all $I\in \I, j\in [p], \xx_{\pa_\GG(j)} \in \RR_{\pa_{\GG}(j)}$. 
They are also subject to the interventional model assumptions in the definition of $ \MM_{\I}(\GG)$. 
These are equality constraints of the form $q^{(I)}_{j;x;\xx_{\pa_\GG(j)}}-q^{(J)}_{j;x;\xx_{\pa_\GG(j)}}=0$ for all $j\notin I\cup J, x\in \RR_{\{j\}}, \xx_{\pa_\GG(j)} \in \RR_{\pa_{\GG}(j)}$.
We denote by $\mathfrak{q}_{\I}$ the ideal in $\R[U_{\GG,\I}]$ generated by the left-hand side of these two equations.
Analogously, we define the set of indeterminates
\[
D_\I :=
\{
p^{(I)}_{\xx} :
\xx\in\RR, I\in\I
\}.
\]
Notice that we have one indeterminate in $D_\I$ for each possible outcome in $\RR$, for every $I\in\I$.  
Hence,  $p^{(I)}_{\xx}$ represents the probability $f^{(I)}(\xx)$.
Given $D_\I$ and $U_{\GG,\I}$, the factorization in Definition~\ref{def: interventional distribution} then gives rise to a map of polynomial rings
\begin{equation*}
\begin{split}
\Phi_{\GG,\I} &: \R[D_\I] \longrightarrow \R[U_{\GG,\I}]/\mathfrak{q}_{\I};	\\
 & p^{(I)}_{\xx} \longmapsto \prod_{j\in I}q^{(I)}_{j;x_j;\xx_{\pa_\GG(j)}}  \prod_{j\notin I}q^{(\emptyset)}_{j;x_j;\xx_{\pa_\GG(j)}} .\\
\end{split}
\end{equation*}
\begin{definition}
\label{def: interventional factorization ideal}
Given jointly distributed random variables $(X_1,\ldots, X_p)$ with joint state space $\RR$, a DAG $\GG = ([p],E)$, and a collection of intervention targets $\I$, the \emph{$\I$-factorization ideal} is $\FF_{\GG,\I} := \ker(\Phi_{\GG,\I})$.
\end{definition}

The second ideal associated to an interventional DAG model is analogous to the ideal $I_{\glo(\GG)}$, defined by the global Markov property with respect to $\GG$.
In a similar fashion, this ideal will be defined via the $\I$-Markov property with respect to $\GG$ and $\I$.  
As suggested by the definition of the $\I$-Markov property, this new ideal will consist of a sum of conditional independence ideals plus an ideal that accounts for the invariance properties described in Definition~\ref{def: I-Markov property}~(2).  
In the following, we will let $I_{\glo(\GG,I)}$ denote the conditional independence ideal $I_{\glo(\GG)}$ in the indeterminates
$
D_I := 
\{
p^{(I)}_{\xx} :
\xx\in\RR\}
$ for a fixed $I\in \I$.
We then define the ideal 
\begin{equation*}
\begin{split}
\Inv_{\GG,\I} :=
\langle
p^{(I)}_{\xx_A,\xx_C+}p^{(\emptyset)}_{\xx_C+} - p^{(\emptyset)}_{\xx_A,\xx_C+}p^{(I)}_{\xx_C+} &\mid
I\in\I, \xx\in\RR, A,C\subset[p] \text{ such that }  \\
& \text{ $C\cup W_{\I\setminus\{I\}}$ d-separates $A$ and $w_I$ in $\GG^\I$}
\rangle.
\end{split}
\end{equation*}
The ideal $\Inv_{\GG,\I}$, unlike $I_{\glo(\GG)}$, is not a conditional independence ideal, as its generators are more akin to generators of the ideals $I_\CC^\ast$ or the ideal of model invariants of a staged tree.  

\begin{remark}
Note that whenever $C=\emptyset$ in one of the d-separation statements defining the polynomials in $\Inv_{\GG,\I}$, we have 
\[
p_{\xx_C +}^{(\emptyset)} = p_{+}^{(\emptyset)}=\sum_{\xx \in \RR}p_{\xx}^{(\emptyset)}
\quad \mbox{and} \quad  p_{\xx_C +}^{(I)} = p_{+}^{(I)}=\sum_{\xx \in \RR}p_{\xx}^{(I)}.
\]
\end{remark}

\begin{definition}
\label{def: interventional conditional ideal}
For random variables $(X_1,\ldots, X_p)$ with state space $\RR$, a DAG $\GG = ([p],E)$, and intervention targets $\I$, the \emph{$\I$-conditional independence ideal} is 
\[
I_{\GG,\I} := \Inv_{\GG,\I} + \bigoplus_{I\in\I} I_{\glo(\GG,I)}.
\]
\end{definition}

Let $\R^{|U_{\GG,\I}|}$ and $\R^{|D_{\I}|}$ denote the Euclidean spaces with coordinates indexed by $U_{\GG,\I}$ and $D_{\I}$
respectively. We denote a point in $\R^{|U_{\GG,\I}|}$  by $q$ and a point in $\R^{|D_{\I}|}$ by $(p_{\xx}^{(I)})_{\xx\in \RR,I\in \I}$. The recursive factorization for interventional settings defines a map 
\begin{align} \label{eq:ifactorization}
\phi_{\I}: \R^{|U_{\GG,\I}|}& \to \R^{|D_{\I}|} \nonumber\\ 
 q&\mapsto (p_{\xx}^{(I)})_{\xx\in \RR,I\in \I} 
\end{align}
where $p_{\xx}^{(I)}=\prod_{j\in I}q^{(I)}_{j;x_j;\xx_{\pa_\GG(j)}}  \prod_{j\notin I}q^{(\emptyset)}_{j;x_j;\xx_{\pa_\GG(j)}}$.
The image under $\phi_{\I}$ of the positive tuples $q\in\R^{|U_{\GG,\I}|}$ that satisfy both, the sum-to-one conditions on $U_{\GG,\I}$
and equalities given by the model assumptions of $\MM_{\I}(\GG)$, is equal to the interventional DAG model $\MM_{\I}(\GG)$.
If $(p_{\xx}^{(I)})_{\xx\in \RR,I\in \I}$ is in the image of $\phi_{\I}$, then for each $I\in \I$, $\sum_{\xx\in \RR}p_{\xx}^{(I)}-1=0$. 
Hence $\MM_{\I}(\GG)$ is a subset of the product of simplices $\Delta_{|\RR|-1}^{\emptyset}\times \Delta_{|\RR|-1}^{I_1}\times
\cdots \Delta_{|\RR|-1}^{I_k}$. Denote the collection of these $k+1$ sum-to-one conditions on
the elements in $\R[D_{\I}]$  by
$\langle \sigma -1 \rangle$.
Just as in the case where there are no non-empty interventional targets, one can prove a theorem that relates the varieties defined by $\FF_{\GG,\I}$ and $I_{\GG,\I}$. 
\begin{theorem}
\label{thm: sat interventional DAGs}
Let $\MM_\I(\GG)$ be an interventional DAG model where $\emptyset\in\I$.  
\begin{enumerate}
	\item There is a containment of ideals $I_{\GG,\I}\subset \FF_{\GG,\I}$. Moreover, if
	\[
	{\bf p} =\prod_{I\in \I}\prod_{\xx \in \bigcup_{k\in[p]}\RR_{\{\pi_1,\cdots,\pi_k\}}} p_{\xx+}^{(I)},
	\]
	then $(I_{\GG,\I}:({\bf p})^{\infty}) = \FF_{\GG,\I}$. 
	\item The ideal $\FF_{\GG,\I}$ is a minimal prime of the ideal $I_{\GG,\I}$.
  	\item We have the following equality of subsets of the product of open simplices $\Delta_{|\RR|-1}^{I_0}\times
\cdots \times \Delta_{|\RR|-1}^{I_k}$:
\[
V_{\geq }(I_{\GG,\I}+\langle\sigma-1 \rangle) = V_{\geq}(\FF_{\GG,\I}) = \mathrm{image}_{\geq}(\phi_{\I}) = \MM_\I(\GG).
\]
\end{enumerate}
\end{theorem}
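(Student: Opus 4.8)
The plan is to deduce all three statements by realizing $\MM_\I(\GG)$ as a staged tree model and invoking Theorem~\ref{thm: sat staged tree}, exactly mirroring how Corollary~\ref{cor: alternative sat} recovers Theorem~\ref{thm:sat}. Concretely, I would build an \emph{interventional staged tree} $(\TT,\theta)$ whose root edges are indexed by the elements of $\I$ and whose subtree below the edge for $I\in\I$ is a copy of $\TT_\GG^\pi$; the stage structure is chosen so that (i) within each copy the stages reproduce those of $\TT_\GG^\pi$ (encoding the global Markov property for $\GG$ in the block $D_I$), and (ii) vertices in different copies recording the same realization of a non-targeted prefix share a stage (encoding the invariance $f^{(I)}(\cdot\mid\cdot)=f^{(\emptyset)}(\cdot\mid\cdot)$ for $j\notin I$). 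With this $\TT$, the map $\psi_\TT$ is exactly $\phi_\I$ once the edge labels are identified with the $q$-parameters and $\mathfrak q_\I$ is imposed, so that $\FF_{\GG,\I}=\ker(\Psi_\TT)$ and the element $\mathbf p$ of the statement is precisely $\prod_{v\in V}p_{[v]}$. The trees in Figures~\ref{fig:4cycle} and~\ref{fig: balanced not DAG} already display this ``split at the root'' shape.

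Granting this realization, part (1) follows from the sandwich $I_{\MM(\TT)}\subset I_{\GG,\I}\subset\FF_{\GG,\I}$. For the right-hand containment I would check each family of generators maps to zero under $\Phi_{\GG,\I}$: the blockwise ideals $I_{\glo(\GG,I)}$ vanish because on the block $D_I$ the map $\Phi_{\GG,\I}$ restricts to the ordinary DAG parameterization, so Theorem~\ref{thm:sat} applies verbatim; and the generators of $\Inv_{\GG,\I}$ vanish because, after imposing $q^{(I)}_{j}=q^{(\emptyset)}_{j}$ for $j\notin I$, the d-separation of $A$ and $w_I$ by $C\cup W_{\I\setminus\{I\}}$ in $\GG^\I$ forces the marginal ratio $p^{(I)}_{\xx_A\xx_C+}/p^{(I)}_{\xx_C+}$ to equal $p^{(\emptyset)}_{\xx_A\xx_C+}/p^{(\emptyset)}_{\xx_C+}$ by the same marginalization computation that proves the $\I$-Markov invariance of Theorem~\ref{thm: YKU characterization}. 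The left-hand containment says the staged tree invariants $I_{\MM(\TT)}$ are generated by the blockwise $I^\ast_{\pred}$-type relations (Lemma~\ref{lem: predecessors and model invariants}) together with the invariance generators, which is read off the stage structure of $\TT$ via Lemma~\ref{lem:invariantEquations}.

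For the saturation claim and part (2), I would first note that $\FF_{\GG,\I}=\ker(\Phi_{\GG,\I})$ is prime, since $\R[U_{\GG,\I}]/\mathfrak q_\I$ is a domain: the equality constraints merely identify indeterminates and each sum-to-one relation solves linearly for one indeterminate, leaving a polynomial ring. Applying Theorem~\ref{thm: sat staged tree}(1)--(2) to $\TT$ yields $(I_{\MM(\TT)}:\mathbf p^\infty)=\ker(\Psi_\TT)=\FF_{\GG,\I}$ and that $\FF_{\GG,\I}$ is a minimal prime of $I_{\MM(\TT)}$. Since saturation preserves inclusions, saturating the sandwich by $\mathbf p$ collapses both ends to $\FF_{\GG,\I}$, giving $(I_{\GG,\I}:\mathbf p^\infty)=\FF_{\GG,\I}$; and because $\FF_{\GG,\I}$ is a prime containing $I_{\GG,\I}$ with $\mathbf p\notin\FF_{\GG,\I}$, the standard argument (any prime $Q$ with $I_{\GG,\I}\subseteq Q\subseteq\FF_{\GG,\I}$ has $\mathbf p\notin Q$, hence $Q\supseteq(I_{\GG,\I}:\mathbf p^\infty)=\FF_{\GG,\I}$) shows it is a minimal prime of $I_{\GG,\I}$ as well. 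Finally, part (3) is the Hammersley--Clifford step: $V_{\geq}(\FF_{\GG,\I})=\mathrm{image}_{\geq}(\phi_\I)=\MM_\I(\GG)$ is immediate from the definition of the parameterization and Theorem~\ref{thm: sat staged tree}(3) for $\TT$, while $V_{\geq}(I_{\GG,\I}+\langle\sigma-1\rangle)=\MM_\I(\GG)$ combines the blockwise form of Theorem~\ref{thm:hammersleyCliff} with the fact that, on positive points, vanishing of $\Inv_{\GG,\I}$ is equivalent to the invariance conditions; Theorem~\ref{thm: YKU characterization} then identifies the common locus with $\MM_\I(\GG)$.

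The main obstacle I anticipate is the faithful construction of the interventional staged tree in the first paragraph, and in particular verifying that its stage partition produces \emph{exactly} the generators of $\Inv_{\GG,\I}$ (neither fewer nor more) out of the d-separation statements in $\GG^\I$, together with the matching of $\prod_{v\in V}p_{[v]}$ to the prefix-marginal product $\mathbf p$ in the statement. Once this dictionary between the $\I$-Markov property and the stage structure is in place, every downstream step is a formal consequence of Theorem~\ref{thm: sat staged tree}, so the crux is the combinatorics of the translation rather than any new algebra.
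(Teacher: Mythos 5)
Your proposal follows essentially the same route as the paper: the tree you describe is exactly the interventional staged tree $\TT_{(\GG,\I)}$ constructed in Example~\ref{ex: interventional DAG models}, and the paper likewise deduces all three parts by identifying $\FF_{\GG,\I}=\ker(\Psi_{\TT_{(\GG,\I)}})$ and applying the general saturation theorem for interventional staged trees. The only adjustment is that you should invoke Theorem~\ref{thm:intsat} rather than Theorem~\ref{thm: sat staged tree}, since your tree is an interventional (not ordinary) staged tree whose model lives in a product of simplices; the translation issue you flag at the end (matching the model invariants $I_{\MM(\TT_{(\GG,\I)})}$ against the generators of $I_{\GG,\I}$) is likewise left implicit in the paper, which simply asserts that the theorem follows from Theorem~\ref{thm:intsat} and Example~\ref{ex: interventional DAG models}.
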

The equality of subsets stated in Theorem~\ref{thm: sat interventional DAGs} is the algebro-geometric version of Theorem~\ref{thm: YKU characterization}.  
The remaining parts of Theorem~\ref{thm: sat interventional DAGs} will follow directly from Theorem~\ref{thm:intsat}
in subsection~\ref{subsec: algebra and geometry of interventional staged tree models}. The next corollary gives graphical sufficient conditions in terms of the $\I$-DAG $\GG^\I$ for the ideal $\FF_{\GG,\I}$
to be toric and generated by a quadratic and square-free Gr\"obner basis. The proof follows from Corollary~\ref{cor:idagstoric} and Theorem~\ref{thm: classification of balanced int staged trees}. For a given set $S\subset[p]$ and a DAG $\GG = ([p],E)$, the set 
\[
\overline{\an}_\GG(S) := \an_\GG(S)\cup S,
\]
is the {\em ancestral closure} of $S$ (in $\GG$).
\begin{corollary}
Let $\MM_{\I}(\GG)$ be an interventional DAG model. If
$\GG$ is a perfect DAG and for all $I,J \in \I$ we
have $I\cup J= \overline{\an}_\GG(I\cup J)$, then $\FF_{\GG,\I}$ is a toric ideal generated by a quadratic and square-free Gr\"obner basis.
\end{corollary}
We end this section with a  detailed analysis of the equations that define the
interventional DAG models in  Figure~\ref{fig:3dags}.
\begin{example}
The model $\MM(\GG)$, where $\GG$ is any of the DAGs in Figure~\ref{fig:3dags},
satisfies $\ker(\Psi_{\TT_\GG}) = I_{\glo(\GG)}= \ker(\Psi_{\TT_{\GG}}^{\toric})=
I_{\MM(\TT)}$. 
This is true because $\GG$ is a perfect DAG. 
From the perspective of staged trees, this holds because the staged tree $\TT_{\GG_1}$
is balanced as shown in Figure~\ref{fig:tree1}. These ideals are contained in the polynomial ring
$\R[D]=\R[p_{ijk}: i,j,k \in \{0,1\}]$ and are generated by the binomials
\begin{equation} \label{eq:binomials}
    p_{000}p_{101}-p_{100}p_{001}, \;\;\; p_{010}p_{111}-p_{110}p_{011},
\end{equation}
associated to the conditional independence relation ${X_1 \independent X_3 | X_2}$.
Consider the
set of intervention targets $\I=\{\emptyset, \{1\}\}$, the $\I$-DAGs
for this intervention are in the second row of Figure~\ref{fig:3dags}.
The polynomial ring for the interventional model $\MM_{\I}(\GG_{1})$ is $\R[D_\I]=\R[p_{ijk}^{(0)}, p_{ijk}^{(1)}:
i,j,k \in \{0,1\}]$. We use the superscripts $(0),(1)$ to denote the observed
and interventional indeterminates respectively.
By construction of $I_{\GG_1,\I}$, this ideal contains four binomials
with the same subindices as the ones in  (\ref{eq:binomials}) but with
superscripts $(0)$ and $(1)$. The remaining generators of $I_{\GG,\I}$
are obtained by writing the invariance properties in part $(2)$ of
Definition~\ref{def: I-Markov property}. The set
of tuples $(A,C)$ that satisfy part $(2)$  of the $\I$-Markov  property is
\[\{(\{2,3\},\{1\}),(\{3\},\{2\}),(\{3\},\{1,2\}),(\{2\},\{1,3\}) \}.\]
For instance, if $A=\{2,3\}$ and $C=\{1\}$, then $\{1\}$ d-separates 
$\{2,3\}$ and $w_{\{1\}}$ in $\GG_{1}^{\I}$. Using the computer algebra package 
\texttt{Macaulay2} \cite{M2}, we found that $I_{\GG_1,\I}$ has eighteen minimal generators of
degree two, and all but two are binomials. The relation to the factorization ideal in this case
is $\sqrt{I_{\GG_1,\I}}= \FF_{\GG_1,\I}$. As implied by Theorem~\ref{thm: classification of balanced int staged trees}~$(3)$, $\FF_{\GG_1,\I}$ is a toric ideal. A similar computation yields that $I_{\GG_2,\I}$ is
not prime or radical but satisfies  $\sqrt{I_{\GG_2,\I}}=\FF_{\GG_2,\I}$.
By Theorem~\ref{thm: classification of balanced int staged trees}~$(3)$,
$\FF_{\GG_2,\I}$ is not a toric ideal; it has sixteen quadratic generators, four 
of which are not binomials.
\end{example}

\section{Interventional Staged Tree Models}
\label{sec: interventional staged tree models}
As discussed in \cite{GS16}, staged tree models can naturally represent hard interventions.  
In this section, we present a theory of general interventions (i.e., both hard and soft) that additionally allows for soft interventions within staged tree models. 
We focus on applications of this new theory to the derivation of algebraic results generalizing Theorems~~\ref{thm: sat staged tree}, .\ref{thm:hammersleyCliff},~\ref{thm:sat},~\ref{thm: perfect equals toric}, and~\ref{thm: perfectly balanced and stratified}.
However, we first describe the basics of these new models so as to motivate their study and application in future statistical works.  

Let $\TT = (V,E)$ be a staged tree with labeling $\theta: E\rightarrow \A\sqcup\LL$.  
Here, the space of labels is partitioned such that $\LL$ corresponds to the indices of distributional parameters and $\A$ corresponds to parameters that label the targets of the interventional experiments.
Since $\TT$ is a rooted tree, its vertices $V$ are partitioned by its levels as
\[
V = L_0\sqcup L_1 \sqcup \cdots \sqcup L_p.
\]

\begin{definition}
\label{def: partitioned staged tree}
Let $\A\sqcup \LL$ be a partitioned set of labels.  
A rooted tree $\TT = (V,E)$, together with a labeling $\theta: E \longrightarrow \A\sqcup \LL$ is a \emph{partitioned tree} if there exists $k^\ast<p$ such that 
\[
\theta\left( \{v\to w \in E : v\in \cup_{i < {k^\ast}}L_i  \}\right)= \A 
\quad
\mbox{ and } 
\quad 
\theta(\{v\rightarrow w \in E : v\in \cup_{i \geq k^\ast}L_i\}) = \LL.
\]
The index $k^\ast$ is called the \emph{splitting level} of $\TT$. 
\end{definition}

A partitioned staged tree is a labeled tree in which the edge labels can be naturally partitioned into two groups: those labeling edges preceding level $k^\ast$, and those labeling edges after level $k^\ast$.  
For example, the labelled tree depicted in Figure~\ref{fig: interventional staged tree} is a partitioned tree with splitting level $k^\ast = 1$.

Given two nodes $v,u\in V$, we will let $\lambda_{v,u}$ denote the unique (undirected) path between $v$ and $u$ in $\TT$. 
Given a node $u\in V$, let $\TT_{u} = (V_{u},E_{u})$ denote the rooted tree with root node $u$.  
We denote a node in $V_{u}$ with $v_u$ and an edge in $E_{u}$ with $e_u$.
For a partitioned tree $\TT$ with labels $\A\sqcup\LL$ and splitting level $k^\ast$, the subtrees $\TT_v$ of $\TT$ for $v\in L_{k^\ast}$ have an induced labeling given by $\theta\big|_{E_v}:E_v\rightarrow \LL$.  
Recall that, for the purposes of studying interventions, we would like to think of the labels in $\A$ as denoting chosen intervention targets and the labels in $\LL$ as denoting distributional parameters.  
In this sense, we are then interested in the special cases when the subtrees $\TT_v$ are in fact staged trees with respect to the labeling $\theta\big|_{E_v}:E_v\rightarrow \LL$.  
This motivates the following definition:
\begin{definition}
\label{def: quasi-staged tree}
A partitioned tree $\TT = (V,E)$ with labeling $\theta: E\rightarrow \A\sqcup\LL$ and splitting level $k^\ast$ is called a \emph{quasi-staged tree} if the subtree $\TT_v = (V_v,E_v)$ of $\TT$ is a staged tree with labeling $\theta\big|_{E_v}:E_v\rightarrow \LL$ for every $v\in L_{k^\ast}$.
\end{definition}

For a uniform, stratified staged tree $\TT = (V,E)$ with levels $(L_1,\ldots, L_p)\sim(X_1,\ldots,X_p)$ and two nodes $u,w\in V$ in the same level, there is a canonical isomorphism between the trees $\TT_u$ and $\TT_w$. 
This is the isomorphism that maps the vertex associated to $\xx\zz\in\RR$ to the one associated to $\yy\zz\in\RR$ for every $\zz\in\RR_{\{k+1,\ldots,p\}}$, where $v\sim\xx,u\sim\yy\in L_k$.  
As an example, consider the subtrees $\TT_u$ and $\TT_w$ of the tree $\TT$ in Figure~\ref{fig: interventional staged tree}. 

If $\TT$ is also partitioned with labeling $\theta: E\rightarrow \A\sqcup\LL$ and splitting level $k^\ast$, then for any two $u,w\in\ L_{k^\ast}$ the subtrees $\TT_u$ and $\TT_w$ are in the same isomorphism class via this canonical isomorphism.  
Moreover, these are the largest subtrees of $\TT$ for which the induced labeling $\theta\big|_{E_v}$ labels each tree in the isomorphism class with only labels in $\LL$.  
Hence, we will let $\TT_\LL = (V_\LL,E_\LL)$ denote any representative of this isomorphism class.
Note that the vertices in $v\in L_{k^\ast}$ identify with the root node $r_\LL\in V_\LL$.  
Using this terminology, we can now define general interventional staged trees.  

	\begin{figure}
	\centering

\begin{tikzpicture}[thick,scale=0.3]
	
	\node (L) at (-16,0) {$k^\ast = 1$};
	\draw[dashed] (L) -- (-16, -14) ; 
	
 	 \node[circle, draw, fill=black!0, inner sep=1pt, minimum width=1pt] (u3) at (0,0) {$3$};
 	 \node[circle, draw, fill=black!0, inner sep=1pt, minimum width=1pt] (u4) at (0,-2) {$4$};
 	 \node[circle, draw, fill=black!0, inner sep=1pt, minimum width=1pt] (u5) at (0,-4) {$5$};
 	 \node[circle, draw, fill=black!0, inner sep=1pt, minimum width=1pt] (u6) at (0,-6) {$6$};
	 
 	 \node[circle, draw, fill=black!0, inner sep=1pt, minimum width=1pt] (w3) at (0,-8) {$3$};
 	 \node[circle, draw, fill=black!0, inner sep=1pt, minimum width=1pt] (w4) at (0,-10) {$4$};
 	 \node[circle, draw, fill=black!0, inner sep=1pt, minimum width=1pt] (w5) at (0,-12) {$5$};
 	 \node[circle, draw, fill=black!0, inner sep=1pt, minimum width=1pt] (w6) at (0,-14) {$6$};
	 
	 \node[circle, draw, fill=pink!90, inner sep=1pt, minimum width=1pt] (u1) at (-8,-1) {$1$};
 	 \node[circle, draw, fill=blue!40, inner sep=1pt, minimum width=1pt] (u2) at (-8,-5) {$2$};

 	 \node[circle, draw, fill=pink!90, inner sep=1pt, minimum width=1pt] (w1) at (-8,-9) {$1$};
 	 \node[circle, draw, fill=blue!40, inner sep=1pt, minimum width=1pt] (w2) at (-8,-13) {$2$};

 	 \node[circle, draw, fill=black!0, inner sep=1pt, minimum width=1pt] (u) at (-16,-3) {$u$};

 	 \node[circle, draw, fill=black!0, inner sep=1pt, minimum width=1pt] (w) at (-16,-11) {$w$};

 	 \node[circle, draw, fill=black!0, inner sep=1pt, minimum width=1pt] (r) at (-24,-7) {$r$};

 	 \draw[->]   (r) -- node[midway,sloped,above]{$a_\emptyset$}  (u) ;
 	 \draw[->]   (r) -- node[midway,sloped,below]{$a_{\{1,2\}}$}  (w) ;
 	 \draw[->]   (u) -- node[midway,sloped,above]{$\ell_1$} (u1) ;
 	 \draw[->]   (u) -- node[midway,sloped,below]{$\ell_2$} (u2) ;

 	 \draw[->]   (u1) -- node[midway,sloped,above]{$\ell_3$} (u3) ;
 	 \draw[->]   (u1) -- node[midway,sloped,below]{$\ell_4$} (u4) ;
 	 \draw[->]   (u2) -- node[midway,sloped,above]{$\ell_5$} (u5) ;
 	 \draw[->]   (u2) -- node[midway,sloped,below]{$\ell_6$} (u6) ;

 	 \draw[->]   (w) -- node[midway,sloped,above]{$\ell_7$} (w1) ;
 	 \draw[->]   (w) -- node[midway,sloped,below]{$\ell_8$} (w2) ;
	 
 	 \draw[->]   (w1) -- node[midway,sloped,above]{$\ell_3$} (w3) ;
 	 \draw[->]   (w1) -- node[midway,sloped,below]{$\ell_4$} (w4) ;
 	 \draw[->]   (w2) -- node[midway,sloped,above]{$\ell_5$} (w5) ;
 	 \draw[->]   (w2) -- node[midway,sloped,below]{$\ell_6$} (w6) ;
 	
\end{tikzpicture}
	\vspace{-0.2cm}
	\caption{An interventional staged tree $\TT_\A$ where $\A =\{a_\emptyset,a_{\{1,2\}}\}$, $\LL = \{\ell_1,\cdots,\ell_8\}$, and $\TT_\LL$ has vertex set $V_\LL = \{r_\LL,2,3,4,5,6,7\}$. Here, the nodes $u$ and $w$ correspond to the root node $r_\LL$ under the canonical isomorphism identifying $\TT_u\approx\TT_\LL\approx\TT_w$.  The remaining nodes in $\TT_u$ and $\TT_w$ are labeled as in $\TT_\LL$. This canonical isomorphism is easily seen to be given by translating $\TT_u$ downward so it sits directly on top of $\TT_w$.  }
	\label{fig: interventional staged tree}
	\end{figure}

\begin{definition}
\label{def: interventional staged tree}
Let $\A\sqcup\LL$ be a partitioned set of labels.  
A rooted tree $\TT = (V,E)$, together with a labeling $\theta: E \rightarrow \A\sqcup\LL$ is an \emph{interventional staged tree} if 
\begin{enumerate}
	\item $\TT$ is a  quasi-staged tree with labeling $\theta: E\rightarrow \A\sqcup\LL$ and splitting level $k^\ast$, 
	\item the subtrees $\TT_v$ for $v\in L_{k^\ast}$ are all isomorphic to some tree $\TT_{\LL} = (V_{\LL},E_{\LL})$,
	\item $\A$ is a collection of labels $a_S$ indexed by subsets of $V_\LL\setminus\{r_\LL\}$, and
	\item for all $u,w\in L_{k^\ast}$, if $e_u\in E_{u}$ and $e_w\in E_{w}$ are identified with the same edge $e\in E_\LL$ pointing into the node $v\in V_\LL$ under the isomorphism, and if $v\notin S$ for any $a_S$ labeling an edge on $\lambda_{u,w}$, then 
	$
	\theta(e_u) = \theta(e_w).
	$
	Otherwise, $\theta(e_u) \neq \theta(e_w)$.
\end{enumerate}
	We will denote an interventional staged tree with labeling $\theta: E\rightarrow \A\sqcup\LL$ by $\TT_\A$ when the labels $\LL$ are understood from context.
\end{definition}

Note that Definition~\ref{def: interventional staged tree} does not insist that the tree $\TT_\LL$ is uniform and stratified.  
However, throughout this paper, we will mainly work with interventional staged trees for which this is the case.

\begin{example}
\label{ex: interventional staged tree}
Figure~\ref{fig: interventional staged tree} shows an example of an interventional staged tree $\TT_\A = (E,V)$ where $\A = \{a_\emptyset, a_{\{1,2\}}\}$ and $\LL = \{\ell_1,\ldots,\ell_8\}$.  
The tree $\TT_\A$ is a partitioned tree with splitting level $k^\ast = 1$  since all edges coming before level $k^\ast = 1$ have labels in $\A$ and all edges coming after level $k^\ast = 1$ have labels in $\LL$.  
We can further see that $\TT_\A$ is quasi-staged since, for a fixed node $v\in L_{k^\ast}$, $\TT_v$ is a staged tree.  
Hence, $\TT_\A$ satisfies condition (1) of Definition~\ref{def: interventional staged tree}.  
It can also be directly verified that conditions (2) and (3) are satisfied.  
To see that $\TT_\A$ also satisfies condition (4), notice that the path $\lambda_{u,w}$ between nodes $u$ and $w$ in $\TT_\A$ contains exactly two edge labels whose indices are $\{\emptyset,\{1,2\}\}$.  
Hence, the nodes in $V_\LL$ that are not in either of these sets are $\{3,4,5,6\}$.  
Since we can see that the unique edge pointing into each of these nodes in $\TT_u$ has the same label as the unique edge pointing into the same node in $\TT_w$, and that all other edges have different labels, we conclude that $\TT_\A$ is an interventional staged tree.  
	\begin{figure}
	\centering

\begin{tikzpicture}[thick,scale=0.3]
	
	\node (L) at (-16,0) {$k^\ast = 1$};
	\draw[dashed] (L) -- (-16, -14) ; 
	
 	 \node[circle, draw, fill=black!0, inner sep=1pt, minimum width=1pt] (u3) at (4,0) {$3$};
 	 \node[circle, draw, fill=black!0, inner sep=1pt, minimum width=1pt] (u4) at (4,-2) {$4$};
 	 \node[circle, draw, fill=black!0, inner sep=1pt, minimum width=1pt] (u5) at (4,-4) {$5$};
 	 \node[circle, draw, fill=black!0, inner sep=1pt, minimum width=1pt] (u6) at (4,-6) {$6$};
	 
 	 \node[circle, draw, fill=black!0, inner sep=1pt, minimum width=1pt] (w3) at (4,-8) {$3$};
 	 \node[circle, draw, fill=black!0, inner sep=1pt, minimum width=1pt] (w4) at (4,-10) {$4$};
 	 \node[circle, draw, fill=black!0, inner sep=1pt, minimum width=1pt] (w5) at (4,-12) {$5$};
 	 \node[circle, draw, fill=black!0, inner sep=1pt, minimum width=1pt] (w6) at (4,-14) {$6$};
	 
	 \node[circle, draw, fill=pink!90, inner sep=1pt, minimum width=1pt] (u1) at (-8,-1) {$1$};
 	 \node[circle, draw, fill=blue!40, inner sep=1pt, minimum width=1pt] (u2) at (-8,-5) {$2$};

 	 \node[circle, draw, fill=pink!90, inner sep=1pt, minimum width=1pt] (w1) at (-8,-9) {$1$};
 	 \node[circle, draw, fill=blue!40, inner sep=1pt, minimum width=1pt] (w2) at (-8,-13) {$2$};

 	 \node[circle, draw, fill=black!0, inner sep=1pt, minimum width=1pt] (u) at (-16,-3) {$u$};

 	 \node[circle, draw, fill=black!0, inner sep=1pt, minimum width=1pt] (w) at (-16,-11) {$w$};

 	 \node[circle, draw, fill=black!0, inner sep=1pt, minimum width=1pt] (r) at (-24,-7) {$r$};

 	 \draw[->]   (r) -- node[midway,sloped,above]{$a_\emptyset$}  (u) ;
 	 \draw[->]   (r) -- node[midway,sloped,below]{$a_{\{1,2\}}$}  (w) ;
 	 \draw[->]   (u) -- node[midway,sloped,above]{\tiny$f^{(\emptyset)}(X_1 = 0)$} (u1) ;
 	 \draw[->]   (u) -- node[midway,sloped,below]{\tiny$f^{(\emptyset)}(X_1 = 1)$} (u2) ;

 	 \draw[->]   (u1) -- node[midway,sloped,above]{\tiny$f^{(\emptyset)}(X_2 = 0 \mid X_1 = 0)$} (u3) ;
 	 \draw[->]   (u1) -- node[midway,sloped,below]{\tiny$f^{(\emptyset)}(X_2 = 1 \mid X_1 = 0)$} (u4) ;
 	 \draw[->]   (u2) -- node[midway,sloped,above]{\tiny$f^{(\emptyset)}(X_2 = 0 \mid X_1 = 1)$} (u5) ;
 	 \draw[->]   (u2) -- node[midway,sloped,below]{\tiny$f^{(\emptyset)}(X_2 = 1 \mid X_1 = 1)$} (u6) ;

 	 \draw[->]   (w) -- node[midway,sloped,above]{\tiny$f^{(I)}(X_1 = 0)$} (w1) ;
 	 \draw[->]   (w) -- node[midway,sloped,below]{\tiny$f^{(I)}(X_1 = 1)$} (w2) ;
	 
 	 \draw[->]   (w1) -- node[midway,sloped,above]{\tiny$f^{(\emptyset)}(X_2 = 0 \mid X_1 = 0)$} (w3) ;
 	 \draw[->]   (w1) -- node[midway,sloped,below]{\tiny$f^{(\emptyset)}(X_2 = 1 \mid X_1 = 0)$} (w4) ;
 	 \draw[->]   (w2) -- node[midway,sloped,above]{\tiny$f^{(\emptyset)}(X_2 = 0 \mid X_1 = 1)$} (w5) ;
 	 \draw[->]   (w2) -- node[midway,sloped,below]{\tiny$f^{(\emptyset)}(X_2 = 1 \mid X_1 = 1)$} (w6) ;
	  	
\end{tikzpicture}
	\vspace{-0.2cm}
	\caption{The interventional staged tree $\TT_\A$ from Figure~\ref{fig: interventional staged tree} with the labels in $\LL$ replaced with conditional probabilities as in the definition of $\MM_\I(\GG)$ for $\I = \{\emptyset,I = \{1\}\}$ and $\GG$ being the DAG $1\rightarrow 2$ on nodes $\{1,2\}$ corresponding to two binary random variables $X_1$ and $X_2$.  }
	\label{fig: interventional staged tree conditional}
	\end{figure}
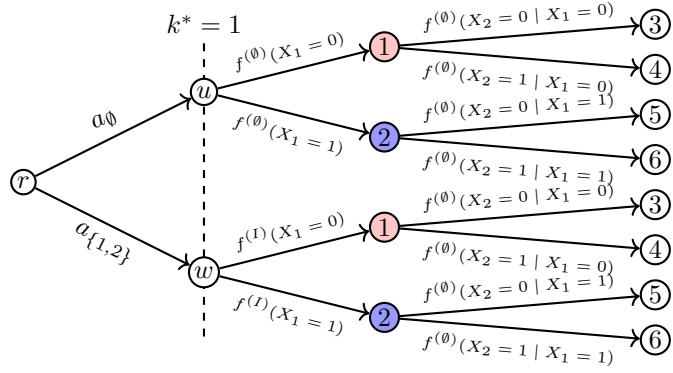
	
In fact, by replacing the labels $\LL = \{\ell_1,\ldots,\ell_8\}$ with conditional probabilities as in Figure~\ref{fig: interventional staged tree conditional}, we begin to see how an interventional staged tree will parametrize the different interventional distributions  in an interventional setting.  
By taking the product along the edges in a root-to-leaf path in $\TT_\A$ in Figure~\ref{fig: interventional staged tree conditional} that ends at a leaf of $\TT_u$, we recover the product $a_\emptyset f^{(\emptyset)}(x_1 \mid \xx_{\pa_\GG(j)})f^{(\emptyset)}(x_2 \mid \xx_{\pa_\GG(j)})$
On the other hand, by taking such a product along a root-to-leaf path ending at a leaf of $\TT_w$, we recover the product
\[
a_{\{1,2\}}\prod_{j\in I}f^{(I)}(x_j \mid \xx_{\pa_\GG(j)})\prod_{j\notin I}f(x_j \mid \xx_{\pa_\GG(j)}).
\]
By setting $a_\emptyset = a_{\{1,2\}} = 1$, we then recover:
\[
(f^{(\emptyset)},f^{(I)}) = \left(\prod_{i\in[2]}f^{(\emptyset)}(x_j \mid \xx_{\pa_\GG(j)}), \prod_{j\in I}f^{(I)}(x_j \mid \xx_{\pa_\GG(j)})\prod_{j\notin I}f(x_j \mid \xx_{\pa_\GG(j)})\right),
\]
where $I = \{1\}$.  
Hence, the interventional stage tree $\TT_\A$ in Figure~\ref{fig: interventional staged tree} is parametrizing the interventional DAG model $\MM_\I(\GG)$ where $\GG$ is the DAG $1\rightarrow 2$ on two nodes and $\I = \{\emptyset,\{1\}\}$.  
Notice that the intervention targets $\I = \{\emptyset, \{1\}\}$ are encoded by the interventional staged tree $\TT_\A$ as the index set $\{\emptyset,\{1,2\}\}$ of the set $\A = \{a_\emptyset,a_{\{1,2\}}\}$ since the nodes $1$ and $2$ of $\TT_\LL$ correspond to the two possible outcomes $X_1 = 0$ and $X_1 =1$ of the (binary) random variable associated to node $1$ in $\GG$.  
\end{example}

The family of interventional staged trees is larger than the family of interventional DAG models, and it allows for the modeling of much more granular interventions. 
The tree depicted in Figure~\ref{fig: interventional non-DAG} is an interventional staged tree where the interventions are conducted not at the level of distinct variables, but within specific contexts given by the root-to-leaf paths leading to the points of intervention.  %
	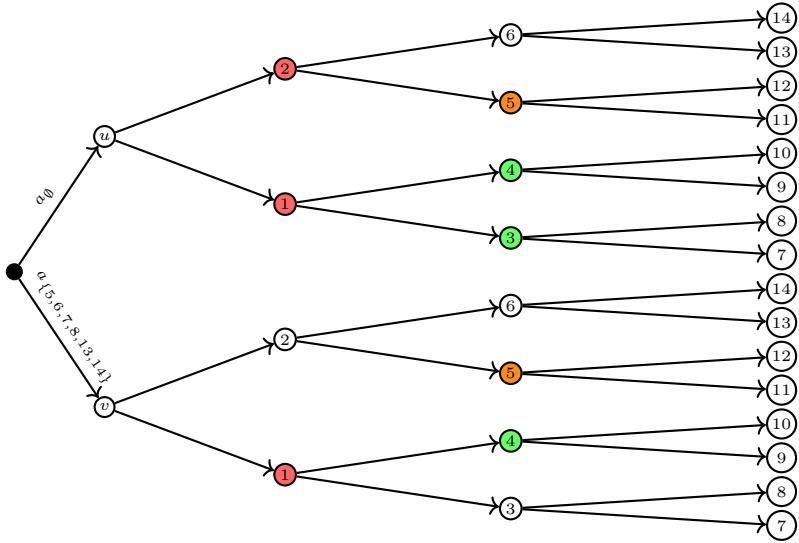
\begin{figure}[b]
	\centering

\begin{tikzpicture}[thick,scale=0.3]
	
 	 \node[circle, draw, fill=black!0, inner sep=1pt, minimum width=2pt] (w3) at (4,0)  {\tiny$14$};
 	 \node[circle, draw, fill=black!0, inner sep=1pt, minimum width=2pt] (w4) at (4,-1.5) {\tiny$13$};
 	 \node[circle, draw, fill=black!0, inner sep=1pt, minimum width=2pt] (w5) at (4,-3) {\tiny$12$};
 	 \node[circle, draw, fill=black!0, inner sep=1pt, minimum width=2pt] (w6) at (4,-4.5) {\tiny$11$};
	 
 	 \node[circle, draw, fill=black!0, inner sep=1pt, minimum width=2pt] (v3) at (4,-6)  {\tiny$10$};
 	 \node[circle, draw, fill=black!0, inner sep=2pt, minimum width=2pt] (v4) at (4,-7.5) {\tiny$9$};
 	 \node[circle, draw, fill=black!0, inner sep=2pt, minimum width=2pt] (v5) at (4,-9) {\tiny$8$};
 	 \node[circle, draw, fill=black!0, inner sep=2pt, minimum width=2pt] (v6) at (4,-10.5) {\tiny$7$};

	 \node[circle, draw, fill=blue!0, inner sep=1pt, minimum width=2pt] (w1) at (-8,-.75) {\tiny$6$};
 	 \node[circle, draw, fill=orange!90, inner sep=1pt, minimum width=2pt] (w2) at (-8,-3.75) {\tiny$5$}; 

 	 \node[circle, draw, fill=green!60, inner sep=1pt, minimum width=2pt] (v1) at (-8,-6.75) {\tiny$4$};
 	 \node[circle, draw, fill=green!60, inner sep=1pt, minimum width=2pt] (v2) at (-8,-9.75) {\tiny$3$};

 	 \node[circle, draw, fill=red!60, inner sep=1pt, minimum width=2pt] (w) at (-18,-2.25) {\tiny$2$};

 	 \node[circle, draw, fill=red!60, inner sep=1pt, minimum width=2pt] (v) at (-18,-8.25) {\tiny$1$};

 	 \node[circle, draw, fill=lime!0, inner sep=1pt, minimum width=2pt] (r) at (-26,-5.25) {\tiny$u$};

 	 \node[circle, draw, fill=black!0, inner sep=1pt, minimum width=2pt] (w3i) at (4,-12)  {\tiny$14$};
 	 \node[circle, draw, fill=black!0, inner sep=1pt, minimum width=2pt] (w4i) at (4,-13.5) {\tiny$13$};
 	 \node[circle, draw, fill=black!0, inner sep=1pt, minimum width=2pt] (w5i) at (4,-15) {\tiny$12$};
 	 \node[circle, draw, fill=black!0, inner sep=1pt, minimum width=2pt] (w6i) at (4,-16.5) {\tiny$11$};
	 
 	 \node[circle, draw, fill=black!0, inner sep=1pt, minimum width=2pt] (v3i) at (4,-18)  {\tiny$10$};
 	 \node[circle, draw, fill=black!0, inner sep=2pt, minimum width=2pt] (v4i) at (4,-19.5) {\tiny$9$};
 	 \node[circle, draw, fill=black!0, inner sep=2pt, minimum width=2pt] (v5i) at (4,-21) {\tiny$8$};
 	 \node[circle, draw, fill=black!0, inner sep=2pt, minimum width=2pt] (v6i) at (4,-22.5) {\tiny$7$};

	 \node[circle, draw, fill=magenta!0, inner sep=1pt, minimum width=2pt] (w1i) at (-8,-12.75) {\tiny$6$};
 	 \node[circle, draw, fill=orange!90, inner sep=1pt, minimum width=2pt] (w2i) at (-8,-15.75) {\tiny$5$};

 	 \node[circle, draw, fill=green!60, inner sep=1pt, minimum width=2pt] (v1i) at (-8,-18.75) {\tiny$4$};
 	 \node[circle, draw, fill=yellow!0, inner sep=1pt, minimum width=2pt] (v2i) at (-8,-21.75) {\tiny$3$};

 	 \node[circle, draw, fill=cyan!0, inner sep=1pt, minimum width=2pt] (wi) at (-18,-14.25) {\tiny$2$};

 	 \node[circle, draw, fill=red!60, inner sep=1pt, minimum width=2pt] (vi) at (-18,-20.25) {\tiny$1$};

 	 \node[circle, draw, fill=violet!0, inner sep=1pt, minimum width=2pt] (ri) at (-26,-17.25) {\tiny$v$};

 	 \node[circle, draw, fill=black!100, inner sep=2pt, minimum width=2pt] (I) at (-30,-11.25) {};

 	 \draw[->]   (I) -- node[midway,sloped,above]{\tiny$a_\emptyset$}    (r) ;
 	 \draw[->]   (I) -- node[midway,sloped,above]{\tiny$a_{\{5,6,7,8,13,14\}}$}  (ri) ;

 	 \draw[->]   (r) --   (w) ;
 	 \draw[->]   (r) --   (v) ;

 	 \draw[->]   (w) --  (w1) ;
 	 \draw[->]   (w) --  (w2) ;

 	 \draw[->]   (w1) --   (w3) ;
 	 \draw[->]   (w1) --   (w4) ;
 	 \draw[->]   (w2) --  (w5) ;
 	 \draw[->]   (w2) --  (w6) ;

 	 \draw[->]   (v) --  (v1) ;
 	 \draw[->]   (v) --  (v2) ;

 	 \draw[->]   (v1) --  (v3) ;
 	 \draw[->]   (v1) --  (v4) ;
 	 \draw[->]   (v2) --  (v5) ;
 	 \draw[->]   (v2) --  (v6) ;

 	 \draw[->]   (ri) --   (wi) ;
 	 \draw[->]   (ri) -- (vi) ;

 	 \draw[->]   (wi) --  (w1i) ;
 	 \draw[->]   (wi) --  (w2i) ;

 	 \draw[->]   (w1i) --  (w3i) ;
 	 \draw[->]   (w1i) -- (w4i) ;
 	 \draw[->]   (w2i) --  (w5i) ;
 	 \draw[->]   (w2i) --  (w6i) ;

 	 \draw[->]   (vi) --  (v1i) ;
 	 \draw[->]   (vi) --  (v2i) ;

 	 \draw[->]   (v1i) --  (v3i) ;
 	 \draw[->]   (v1i) -- (v4i) ;
 	 \draw[->]   (v2i) -- (v5i) ;
 	 \draw[->]   (v2i) --  (v6i) ;

\end{tikzpicture}
	\vspace{-0.2cm}
	\caption{An interventional staged tree that does not parameterize an interventional DAG model.}
	\label{fig: interventional non-DAG}
	\end{figure}

Given an interventional staged tree, we can associate to it the collection of all interventional distributions that arise from the interventions specified by the tree. 
Let $\TT_\A = (V,E)$ be an interventional staged tree with labeling $\theta: E \rightarrow \A\sqcup\LL$ and splitting level $k^\ast$. 
The associated parameter space for the model is then 
\[
\Theta_{\TT_{\A}} :=
\left\{ 
x\in \R^{|\LL|} : x_{\theta(e)} \in(0,1) \text{ and } \sum_{e\in E(v)}x_{\theta(e)} = 1
\right\}.
\]
Note that the labels in $\A$ are not used as parameters in the definition of $\Theta_{\TT_{\A}} $.
\begin{definition}
\label{def: interventional staged tree model}
Let $\TT_\A = (V,E)$ be an interventional staged tree with labeling $\theta: E \rightarrow \A\sqcup\LL$ and splitting level $k^\ast$. 
The \emph{interventional staged tree model} $\mathcal{M}_{(\TT_{\A},\theta)}$ is the image of the map
\begin{equation*}
\begin{split}
\psi_{\TT_{\A}} &: \Theta_{\TT_{\A}} \longrightarrow \times_{u\in L_{k^\ast}}\Delta^\circ_{|{\bf i}_{\TT_{\LL}}| -1} ;	\\
 & x \longmapsto \left(\left(\prod_{e\in E(\lambda(v))}x_{\theta(e)}\right)_{v\in{\bf i}_{\TT_{u}}}\right)_{u\in L_{k^\ast}}.	\\
\end{split}
\end{equation*}
\end{definition}

As Example~\ref{ex: interventional staged tree} begins to demonstrate, interventional staged trees and their associated models generalize discrete interventional DAG models.  

\begin{example}[Interventional DAG models]
\label{ex: interventional DAG models}
For a DAG $\GG$ and a collection of intervention targets $\I$, we can extend Example~\ref{ss:dagtree} and produce an interventional staged tree whose 
associated model is exactly $\MM_{\I}(\GG)$. 
Fix $\I :=\{I_0 = \emptyset,I_1,\ldots,I_K\}$ and a linear extension $\pi$ of $\GG$.
Define the collection of nodes 
\[
V:= \{r,v_0,\ldots,v_K\}\cup\bigcup_{I\in\I}\bigcup_{j\in[p]}\RR_{[j]}^{(I)},
\]
where 
\[
\RR_{[j]}^{(I)} := \{x_1^{(I)}\cdots x_j^{(I)} : x_i^{(I)}\in[d_i] \forall i\in[j]\}.
\]
Here, $r$ denotes the root node of the interventional staged tree, the nodes $v_0,\ldots, v_k$ denote the different interventions, and the nodes in $\RR_{[j]}^{(I)}$ represent the marginal outcomes of the vector of variables $(X_1,\ldots,X_p)$ with one copy of each outcome for each different interventional experiment.
Define a set $E$ where $v\rightarrow w\in E$ if and only if either
\begin{enumerate}
	\item $v = r$ and $w\in\{v_0,\ldots, v_K\}$, 
	\item $v = v_k$ and $w \in\RR_{[1]}^{(I_k)}$ for some $k\in[K]\cup\{0\}$, or
	\item $v = x_1^{(I)}\cdots x_j^{(I)}$ and $w = x_1^{(I)}\cdots x_j^{(I)}x_{j+1}^{(I)}$ for some $x_1^{(I)}\cdots x_j^{(I)}\in\RR_{[j]}^{(I)}$ and $x_1^{(I)}\cdots x_j^{(I)}x_{j+1}^{(I)}\in\RR_{[j+1]}^{(I)}$ for some $j\in[p]$ and $I\in\I$.
\end{enumerate}
This gives the vertices and edges of the interventional staged tree.  
It remains to construct the labeling $\theta: E \rightarrow \A\sqcup\LL$.
Following Example~\ref{ex: interventional staged tree}, the labeling $\theta$ will be chosen such that the resulting labeled tree is partitioned with splitting level $k^\ast = 1$.  
By construction of the edge set $E$, we can see that $L_1 = \{v_0,\ldots, v_K\}$.
Moreover, for two interventions $I_k,I_{k^\prime}\in\I$, the canonical isomorphism of the subtrees $\TT_{v_k}$ and $\TT_{v_{k^\prime}}$ is given by mapping the vertex $x_1^{(I_k)}\cdots x_j^{(I_k)}$ of $\TT_{v_k}$ to the vertex $x_1^{(I_{k^\prime})}\cdots x_j^{(I_{k^\prime})}$ of $\TT_{v_{k^\prime}}$.  
Moreover, each of the subtrees is isomorphic to the tree $\TT_\GG$ by mapping $x_1^{(I_k)}\cdots x_j^{(I_k)}$ of $\TT_{v_k}$ to the vertex $x_1\cdots x_j$ of $\TT_{\GG}$.
Hence, we will let $\TT_\GG$ be the tree $\TT_\LL$ in Definition~\ref{def: interventional staged tree}~(2).  

To construct the labeling $\theta$, recall that for each $I\in \I$ a discrete interventional distribution $\prob^{(I)}$ over $(X_1,\ldots,X_p)$ with respect to $\GG$ and $\prob^{(\emptyset)}$ will have probability mass function $f^{(I)}$ that factorizes as 
\[
f^{(I)}({\xx}) = \prod_{j\in I}f^{(I)}({ x}_j \mid {\xx}_{\pa_\GG(j)})\prod_{j\notin I}f^{(\emptyset)}({ x}_j \mid {\xx}_{\pa_\GG(j)}).
\]
for all ${\xx}\in\RR$.  
Hence, we let
\[
\LL := 
\{f^{(\emptyset)}({x}_j \mid {\xx}_{\pa_\GG(j)}) : {\xx}\in\RR\}
\cup
\{f^{(I)}({x}_j \mid {\xx}_{\pa_\GG(j)}) : j\in I, I\in\I\setminus\{\emptyset\}, {\xx}\in\RR\},
\]
and
\[
\A := \{ a_{S_I} : S_I = \cup_{j\in I}\RR_{[j]} \mbox{ for all } I\in\I \}. 
\]
We then define the labeling $\theta: E \rightarrow \A\sqcup\LL$ where
\begin{equation}
\label{eq:stratlabeling}
\begin{split}
\theta:& (r\rightarrow v_k) \mapsto a_{S_{I_k}} \text{ for all $k\in\{0,\ldots, K\}$,} \\
\theta:& (x_1^{(I)}\cdots x_j^{(I)}\rightarrow x_1^{(I)}\cdots x_j^{(I)}x_{j+1}^{(I)}) \mapsto 
\begin{cases}
	f^{(I)}(x_{j+1} \mid {\xx}_{\pa_\GG(j+1)}) & \text{ if $j+1\in I$,}	\\
	f^{(\emptyset)}(x_{j+1} \mid {\xx}_{\pa_\GG(j+1)}) & \text{ otherwise. }	\\
\end{cases} \\
\end{split}
\end{equation}
The interventional staged tree for $\GG$ and $\I$ (with respect to the linear extension $\pi$ of $\GG$) is then $\TT_{(\GG,\I)} := (V,E)$ with the labeling $\theta: E \rightarrow \A\sqcup\LL$. 

By the chosen labeling $\theta$, $\TT_{(\GG,\I)}$ is a partitioned tree with splitting level $k^\ast = 1$, and each subtree $\TT_{v_k}$ of $\TT_{(\GG,\I)}$, for $v_k\in L_{k^\ast}$, is a staged tree isomorphic to $\TT_\GG$.  
Hence, $\TT_{(\GG,\I)}$ is a quasi-staged tree satisfying conditions~(1),~(2), and~(3) of Definition~\ref{def: interventional staged tree}.  
To see that condition~(4) holds, consider two vertices $v_k,v_{k^\prime}\in L_{k^\ast}$.  
The unique path $\lambda_{v_k,v_{k^\prime}}$ in $\TT_{(\GG,\I)}$ connecting them contains two edges: $v_k \leftarrow r \rightarrow v_{k^\prime}$, and the two labels on these edges have indices $\cup_{j\in I_k}\RR_{[j]}$ and $\cup_{j\in I_{k^\prime}}\RR_{[j]}$, respectively, for the intervention targets $I_k,I_{k^\prime}\in\I$.  
By construction, any edge pointing into a vertex $x_1^{(I_k)}\cdots x_j^{(I_k)}$ of $\TT_{v_k}$ or $x_1^{(I_{k^\prime})}\cdots x_j^{(I_{k^\prime})}$ of $\TT_{v_{k^\prime}}$ where $j\notin \bigcup_{j\in I_k}\RR_{[j]}\cup\bigcup_{j\in I_{k^\prime}}\RR_{[j]}$ will have label $f^{(\emptyset)}(x_{j} \mid {\bf x}_{\pa_\GG(j)})$.  
Any other edge will have labels $f^{(I_k)}(x_{j} \mid {\bf x}_{\pa_\GG(j)})$ and $f^{(I_{k^\prime})}(x_{j} \mid {\bf x}_{\pa_\GG(j)})$ in the trees $\TT_{v_{k}}$ and $\TT_{v_{k^\prime}}$, respectively.  
Since these labels are not equal, $\TT_{(\GG,\I)}$ satisfies condition~(4) of Definition~\ref{def: interventional staged tree}.  
One can check, as in Example~\ref{ex: interventional staged tree} and Figure~\ref{fig: interventional staged tree conditional}, that the interventional staged tree model for $\TT_{(\GG,\I)}$ is precisely the interventional DAG model $\MM_\I(\GG)$. 
\end{example}

The family of interventional staged trees can also be used to model general interventions in well-studied context-specific models, such as Bayesian multinets \cite{GH96}, similarity networks \cite{H91} or LDAGs \cite{PNKC15}.



\subsection{Defining equations of interventional staged tree models}
\label{subsec: algebra and geometry of interventional staged tree  models}
We associate three ideals to interventional staged tree models. 
These extend the definition of the ideals associated to staged tree models in such a way that when there is no intervention we recover the ideals defined in subsection~\ref{subsub: defining eqns of trees}.
We start by extending Lemma~\ref{lem:invariantEquations}, Definition~\ref{def:invs}, and 
Proposition~\ref{prop:subsimplexinvariants} to the interventional case. 

Let $\TT_\A=(V,E)$ be an interventional staged tree with labeling $\theta: E\rightarrow \A\sqcup\LL$. 
Fix $v\in V$ and suppose $v\in V_{u}$ for some $u\in{L_{k^*}}$. 
We write $[v]\subset {\bf i}_{\TT_{u}}$ for the indices of the leaves whose root-to-leaf paths in the subtree $\TT_u$ pass through the node $v$. 
For a point $(p_{l}^{(u)})_{\ell\in {\bf i}_{\TT_{u}},u\in{L_{k^*}}}\in\MM_{(\TT_{\A},\theta)}$, set $p_{[v]}^{(u)}:=\sum_{\ell\in [v]}p_{\ell}^{(u)}$.
The proof of the next Lemma  is analogous to Lemma~\ref{lem:invariantEquations},
we include a proof here for the sake of completeness.
\begin{lemma}
\label{lem:intsimplex}
Let $\MM_{(\TT_{\A},\theta)}$ be an interventional staged tree model.
\begin{enumerate}
\item Fix $p=(p_{\ell}^{(u)})_{\ell\in {\bf i}_{\TT_{u}},u\in L_{k^*}}\in 
\MM_{(\TT_{\A},\theta)}$, $x\in \Theta_{\TT_\A}$,  and suppose $\psi_{\TT_{\A}}(x)=p$. Then
$ x_{\theta(v\to v')}=p_{[v']}^{(u)}/p_{[v]}^{(u)}$
where $v\to v'$ is an edge of the subtree $\TT_{u}$.
\item If two nodes $v,w$ in $\TT_\A$ are in the same stage, then for all $p\in \MM_{(\TT_{\A},\theta)}$,
the equation
\[
p_{[v']}^{(u_1)}p_{[w]}^{(u_2)}=p_{[w']}^{(u_2)}p_{[v]}^{(u_1)}
\]
holds for all $v\to v'\in E(v), w\to w'\in E(w)$ with $\theta(v\to v')=\theta(w\to w')$
and $v\to v' \in E_{u_1}, w\to w'\in E_{u_2}$.
\end{enumerate}
\end{lemma}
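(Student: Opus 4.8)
The plan is to follow the proof of Lemma~\ref{lem:invariantEquations} from \cite{DG20}, but to run it one subtree at a time so as to respect the partitioned structure of $\TT_\A$. The observation I will lean on is that, although a single stage of $\TT_\A$ may contain nodes lying in different subtrees $\TT_{u_1},\TT_{u_2}$ with $u_1,u_2\in L_{k^\ast}$, each coordinate $p_{[v]}^{(u)}$ is computed entirely inside the one staged tree $\TT_u$, all of whose edge labels lie in $\LL$. Since labels in $\A$ never enter the parameter space $\Theta_{\TT_\A}$, the product $\prod_{e\in E(\lambda(v))}x_{\theta(e)}$ appearing in Definition~\ref{def: interventional staged tree model} effectively ranges only over the $\LL$-labelled edges on the path from $u$ to the leaf inside $\TT_u$; I will write $\lambda_u(\cdot)$ for such paths throughout.

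For part (1), I would fix $u\in L_{k^\ast}$ and a node $v\in V_u$, sum the parameterization over the leaves $\ell\in[v]$, and factor out the edges common to $\lambda_u(v)$. This should give
\[
p_{[v]}^{(u)} = \sum_{\ell\in[v]}\prod_{e\in E(\lambda_u(\ell))}x_{\theta(e)} = \left(\prod_{e\in E(\lambda_u(v))}x_{\theta(e)}\right) t(v)\big|_x,
\]
where $t(v)$ is the interpolating polynomial of $\TT_v$. I would then verify, by a short induction on the height of $\TT_v$ using the relations $\sum_{e\in E(w)}x_{\theta(e)}=1$ defining $\Theta_{\TT_\A}$, that $t(v)\big|_x = 1$ for every $x\in\Theta_{\TT_\A}$; consequently $p_{[v]}^{(u)} = \prod_{e\in E(\lambda_u(v))}x_{\theta(e)}$, which is strictly positive since each $x_{\theta(e)}\in(0,1)$. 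Because $E(\lambda_u(v')) = E(\lambda_u(v))\cup\{v\to v'\}$ for an edge $v\to v'$ of $\TT_u$, dividing the two products will yield $p_{[v']}^{(u)}/p_{[v]}^{(u)} = x_{\theta(v\to v')}$.

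For part (2), I would take $v,w$ in the same stage with $v\to v'\in E_{u_1}$, $w\to w'\in E_{u_2}$, and $\theta(v\to v')=\theta(w\to w')$, and apply part (1) inside $\TT_{u_1}$ and inside $\TT_{u_2}$. The crucial point is that both applications use the same parameter vector $x$ with $\psi_{\TT_\A}(x)=p$, so I obtain $p_{[v']}^{(u_1)}/p_{[v]}^{(u_1)} = x_{\theta(v\to v')}$ and $p_{[w']}^{(u_2)}/p_{[w]}^{(u_2)} = x_{\theta(w\to w')}$. Since the two labels agree, the ratios agree, and clearing the (positive) denominators will give the desired identity $p_{[v']}^{(u_1)}p_{[w]}^{(u_2)} = p_{[w']}^{(u_2)}p_{[v]}^{(u_1)}$.

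The argument is essentially formal once the subtree bookkeeping is set up, so the only thing I would take care over is ensuring the two invocations of part (1) in part (2) genuinely refer to the same $x$ even when $u_1\neq u_2$ — this is precisely why $\psi_{\TT_\A}$ in Definition~\ref{def: interventional staged tree model} is built from a single copy of $\Theta_{\TT_\A}$ rather than from independent per-subtree parameters. I expect the main (modest) obstacle to be stating the restriction of $E(\lambda(v))$ to $\LL$-labelled edges precisely and confirming $t(v)\big|_x=1$, both of which underpin the positivity that legitimizes the divisions.
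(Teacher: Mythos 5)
Your proposal is correct and follows essentially the same route as the paper: factor the common subpath $\lambda_{u,v}$ out of $p_{[v]}^{(u)}=\sum_{\ell\in[v]}\prod_{e}x_{\theta(e)}$, observe that the residual sum (your $t(v)\big|_x$) collapses to $1$ by the sum-to-one conditions on $\Theta_{\TT_\A}$, and then for part (2) apply part (1) in each of $\TT_{u_1}$ and $\TT_{u_2}$ with the single shared parameter vector $x$ and cross-multiply. The only cosmetic difference is that you name the residual factor as the interpolating polynomial and propose an explicit induction for $t(v)\big|_x=1$, whereas the paper simply invokes the sum-to-one conditions directly.
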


\begin{proof}
For part $(1)$, by definition of $\psi_{\TT_{\A}}$, $p_{\ell}^{(u)}= \prod_{e\in E(\lambda(\ell))}x_{\theta(e)}$ where
$\ell\in {\bf i }_{\TT_{u}}, u\in L_{k^*}$. 
Hence in terms of $x_{\theta(e)}$,
    \begin{equation*}
    \begin{split}
     p_{[v']}^{(u)}&= \sum_{l\in [v']}\prod_{e\in E(\lambda(l))}x_{\theta(e)} 
     = \prod_{e\in E(\lambda_{u, v'})} x_{\theta(e)} \left(\sum_{\ell\in [v']} 
     \prod_{e\in E(\lambda_{v',\ell})} x_{\theta(e)} \right) = \prod_{e\in E(\lambda_{u, v'})} x_{\theta(e)}.
    \end{split}
   \end{equation*}
   The first equality above is by definition of $p_{[v']}^{(u)}$. 
   For the second note that all paths from $u$ to a leaf $\ell\in[v']$ contain the subpath $\lambda_{u,v'}$.
   Hence, all elements in the summation after the first equality share a factor of those parameters
   $x_{\theta(e)}$ for which $e\in E(\lambda_{u, v'})$. For the last equality, note
   that the sum-to-one conditions on $\Theta_{\TT_{\A}}$ imply  the second factor after the second equality
   is equal to one. Using the same argument, $p_{[v]}^{(u)}= \prod_{e\in E(\lambda_{u,v})}x_{\theta(e)}$. 
   Combining these two expressions yields $x_{\theta(v\to v')}= p_{[v']}^{(u)}/p_{[v]}^{(u)}$.

   For the part $(2)$, let $v,w$ be two nodes as in the statement of (2). Using (1) and the fact
   that $v,w$ are in the same stage, we have
   $p_{[v']}^{(u_1)}/p_{[v]}^{(u_1)}=x_{\theta(v\to v')}=x_{\theta(w\to w')}=
   p_{[w']}^{(u_2)}/p_{[w]}^{(u_2)}$. 
   Cross multiplying the denominators yields the desired equation.
\end{proof}

We use the equations in the previous lemma to define the ideal of model invariants for interventional staged tree models.
Let
$
\R[D_{\TT_\A}] :=
\R[
p_{\ell}^{(u)}: u\in{L_{k^*}}, {\ell\in{\bf i}_{\TT_{u}}}
]
$.
For $v\in V_{u}, u\in{L_{k^*}}$, the element $p_{[v]}^{(u)} \in \R[D_{\TT_\A}]$ is defined by $p_{[v]}^{(u)}:=
\sum_{\ell\in[v]}p_{\ell}^{(u)}$. 
\begin{definition}
\label{def:imodelinvariants}
The \emph{ideal of model invariants}, $I_{\MM(\TT_\A)}$, contained in $\R[D_{\TT_\A}]$ and associated to the
interventional staged tree model $\MM_{(\TT_\A,\theta)}$ is
\begin{equation*}
\begin{split}
I_{\MM(\TT_\A)} &:= \langle p_{[v]}^{(u_1)}p_{[w']}^{(u_2)}-p_{[v']}^{(u_1)}
p_{[w]}^{(u_2)}: v\in V_{u_1}, w\in V_{u_2}, v,w \in S_{i}, i\in \{0,\ldots, m\}\\
 & \phantom{:=} v'\in \ch_{\TT}(v), w' \in \ch_{\TT}(w)
\text{ and } \theta(v \to v')=\theta(w\to w')\rangle,
\end{split}
\end{equation*}
where $S_0,\ldots, S_m$ are the stages of $(\TT_\A,\theta)$.
\end{definition}

For a fixed $u\in{L_{k^*}}$, set $p_{+}^{(u)}:=\sum_{\ell \in {\bf i}_{\TT_{u}}}p_{\ell}^{(u)}$ and let $\langle p_{+}-1\rangle$ denote the ideal generated by elements of the form $p_{+}^{(u)}-1$ for all $u\in L_{k*}$.
\begin{proposition}
The equality $\MM_{(\TT_{\A},\theta)}=V_{\geq}(I_{\MM(\TT_{\A})}+\langle p_{+}-1\rangle)$ holds inside
the product of simplices $\times_{\lambda\in\Lambda_{L_{k^\ast}}}\Delta^\circ_{|{\bf i}_{\TT_{\LL}}| -1}$.
\end{proposition}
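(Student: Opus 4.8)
The plan is to adapt the proof of Proposition~\ref{prop:subsimplexinvariants} (i.e.\ \cite[Theorem 3.1]{DG20}) to the interventional setting by establishing the two inclusions separately, working throughout inside the product of \emph{open} simplices so that all relevant coordinates are strictly positive. For the inclusion $\MM_{(\TT_{\A},\theta)}\subseteq V_{\geq}(I_{\MM(\TT_{\A})}+\langle p_{+}-1\rangle)$, I would simply invoke Lemma~\ref{lem:intsimplex}(2): every point $p=(p_\ell^{(u)})$ of the model annihilates each binomial generator $p_{[v]}^{(u_1)}p_{[w']}^{(u_2)}-p_{[v']}^{(u_1)}p_{[w]}^{(u_2)}$ of $I_{\MM(\TT_\A)}$, so $p\in V_{\geq}(I_{\MM(\TT_\A)})$, and since each coordinate block $(p_\ell^{(u)})_{\ell\in{\bf i}_{\TT_u}}$ lies in $\Delta^\circ_{|{\bf i}_{\TT_\LL}|-1}$ it sums to one, giving $p_+^{(u)}-1=0$ for every $u\in L_{k^\ast}$, hence $p\in V_{\geq}(\langle p_+-1\rangle)$ as well.

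The substantive direction is the reverse inclusion. I would start from an arbitrary nonnegative point $p=(p_\ell^{(u)})$ lying in $V_{\geq}(I_{\MM(\TT_\A)}+\langle p_+-1\rangle)$ inside the product of open simplices, and construct a parameter $x\in\Theta_{\TT_\A}$ with $\psi_{\TT_\A}(x)=p$. Positivity of the leaf coordinates forces every partial sum $p_{[v]}^{(u)}>0$, so for each edge $v\to v'$ of a subtree $\TT_u$ the ratio $p_{[v']}^{(u)}/p_{[v]}^{(u)}$ is well defined and lies in $(0,1)$. Guided by Lemma~\ref{lem:intsimplex}(1), I would set $x_{\theta(v\to v')}:=p_{[v']}^{(u)}/p_{[v]}^{(u)}$.

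The crux, and the step I expect to be the main obstacle, is showing that this rule is independent of the representative edge chosen to carry a given label, since the labels in $\LL$ are shared across the isomorphic subtrees $\TT_u$ according to condition~(4) of Definition~\ref{def: interventional staged tree}. If $\theta(v\to v')=\theta(w\to w')$ with $v\to v'\in E_{u_1}$ and $w\to w'\in E_{u_2}$, then $v$ and $w$ lie in the same stage, so the corresponding generator of $I_{\MM(\TT_\A)}$ vanishes at $p$, that is $p_{[v']}^{(u_1)}p_{[w]}^{(u_2)}=p_{[w']}^{(u_2)}p_{[v]}^{(u_1)}$, which rearranges to $p_{[v']}^{(u_1)}/p_{[v]}^{(u_1)}=p_{[w']}^{(u_2)}/p_{[w]}^{(u_2)}$. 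Thus $x_{\theta(e)}$ is unambiguous, and this is precisely the place where the hypothesis $p\in V_{\geq}(I_{\MM(\TT_\A)})$ is used.

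It then remains to verify that $x\in\Theta_{\TT_\A}$ and that $\psi_{\TT_\A}(x)=p$. For the sum-to-one conditions, fixing a non-leaf node $v$ of some $\TT_u$, the children partial sums refine $p_{[v]}^{(u)}$, so $\sum_{v\to v'\in E(v)}x_{\theta(v\to v')}=\sum_{v'\in\ch(v)}p_{[v']}^{(u)}/p_{[v]}^{(u)}=p_{[v]}^{(u)}/p_{[v]}^{(u)}=1$; together with $x_{\theta(e)}\in(0,1)$ this places $x$ in $\Theta_{\TT_\A}$. Finally, telescoping the defining product over the $\LL$-labelled edges of the path from $u$ to a leaf $\ell$ in $\TT_u$ collapses $\prod_{e\in E(\lambda(\ell))}x_{\theta(e)}$ to $p_{[\ell]}^{(u)}/p_{[u]}^{(u)}=p_\ell^{(u)}/p_+^{(u)}=p_\ell^{(u)}$, using $p_+^{(u)}=1$. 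Hence $\psi_{\TT_\A}(x)=p$ and $p\in\MM_{(\TT_\A,\theta)}$, which completes the argument.
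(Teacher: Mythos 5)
Your proposal is correct and follows essentially the same route as the paper's proof: the forward inclusion via Lemma~\ref{lem:intsimplex}(2) together with the sum-to-one conditions, and the reverse inclusion by defining $x_{\theta(v\to v')}=p_{[v']}^{(u)}/p_{[v]}^{(u)}$, checking well-definedness of labels via the generators of $I_{\MM(\TT_\A)}$, verifying the sum-to-one constraints, and telescoping the product along root-to-leaf paths. Your explicit justification of label-consistency across stages is slightly more detailed than the paper's, but the argument is the same.
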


\begin{proof}
Suppose $p\in \MM_{(\TT_\A,\theta)}$. By $(2)$ from Lemma~\ref{lem:intsimplex}, the defining polynomials of 
$I_{\MM(\TT_{\A})}$ are equal to zero when evaluated at $p$. The same is true for the generators of
$\langle p_{+}-1\rangle$ because $p$ is a point inside a product of simplices. This shows
$p\in V_{\geq}(I_{\MM(\TT_{\A})}+\langle p_{+}-1\rangle)$. 

Now suppose $p\in V_{\geq}(I_{\MM(\TT_{\A})}+\langle p_{+}-1\rangle)$, then the point $x\in \R^{|\LL|}$ defined 
by $x_{\theta(v\to v')}= 
p_{[v']}^{(u)}/p_{[v]}^{u)}$ satisfies $x\in \Theta_{\TT_\A}$ and $\psi_{\TT_{\A}}(x)= p$.
First, note that $x_{\theta(v\to v')}$ is well-defined because $p$ is a point in the product of open probability simplices. 
Second, $\sum_{v'\in \ch_{\TT}(v)}p_{[v']}^{(u)}=p_{[v]}^{(u)}$, and so
$\sum_{e\in E(v)}x_{\theta(e)}=1$. 
Thus, $x$ satisfies the sum-to-one conditions on the definition of $\Theta_{\TT_\A}$. 
By using the polynomials in $I_{\MM(\TT_\A)}$ we can check that $x_{\theta(v\to v')}=x_{\theta(w\to w')}$ whenever $\theta(v\to v')=\theta(w \to w')$. 
It remains to show that $\psi_{\TT_{\A}}(x)=p$. 
We do this for each coordinate. 
Fix $u\in{L_{k^*}}, \ell\in {\bf i}_{\TT_{u}}$. 
Then
\begin{equation}
\label{eq:inverse}
\begin{split}
\left[ \psi_{\TT_\A}(x)\right]_{\ell,u} &= \prod_{e\in E(\lambda_{u,\ell})}x_{\theta(e)}=
 \prod_{i=0}^{m-1} \frac{p_{[u_{i+1}]}^{(u)}}{p_{[u_i]}^{(u)}}=
 \frac{p_{\ell}^{(u)}}{p_{[u_0]}^{(u)}}= p_{\ell}^{(u)}
\end{split}
\end{equation}
where the path $\lambda_{u,\ell}$ is denoted by its vertices $u_0=u\to u_1 \to \cdots \to u_m = \ell$ and we use the definition $x_{\theta(u_i\to u_{i+1})}= p_{[u_{i+1}]}^{(u)}/p_{[u_i]}^{(u)}$. 
Also $p_{[u_{0}]}^{(u)}=1$ because of the sum-to-one conditions in $\langle p_{+}-1\rangle$.
\end{proof}

We now generalize Definitions~\ref{def: toric ideal of a staged tree}, \ref{def: staged tree model ideal} to interventional staged trees. 
We use the ring $\R[\Theta]_{\TT_\A}:=\R[\A,\LL]$ and the ideal $\mathfrak{q}_{\TT_{\A}}$
generated by all elements of the form $\sum_{e\in E(v)}\theta(e) = 1$ for all vertices $v$ such that  $\ell(v)\geq k^*$.
\begin{definition}
\label{def:interventional ideals}
The \emph{toric interventional staged tree ideal} is the kernel of the map
\begin{equation*}
\begin{split}
\Psi_{\TT_{\A}}^{\mathsf{toric}}&: \R[D_{\TT_{\A}}]  \longrightarrow \R[\Theta]_{\TT_{\A}} ;	\\
 & p_{v}^{(u)} \longmapsto \left(\prod_{e\in E(\lambda_{r,u})} \theta(e)\right)\left(\prod_{e\in E(\lambda_{u,v})}\theta(e)\right).	\\
\end{split}
\end{equation*}
The \emph{interventional staged tree model ideal } is the kernel of the map $\Psi_{\TT_{\A}} = \pi \circ \Psi_{\TT_{\A}}^{\mathsf{toric}}$ where 
$\pi: \R[\Theta]_{\TT_\A}  \to\R[\Theta]_{\TT_\A} /\mathfrak{q}_{\TT_{\A}} $ is the canonical projection onto the quotient ring.
\end{definition}

\begin{remark}
\label{rem:multihom}
Under the assumption that $\TT_{\A}$ has no stages before the splitting level, we may regard the product of edge labels 
$\prod_{e\in\lambda_{r,u}}\theta(e)$ in the definition of $\Psi_{\TT_{\A}}$, as a single indeterminate $a_{u}$ for each $u\in{L_{k^*}}$. 
This change has no effect in $\ker(\Psi_{\TT_\A})$. 
Thus, from now on we redefine $\R[\Theta_{\TT_{\A}}]=\R[a_{u},\LL : u\in L_{k^*}]$ and the map $\Psi_{\TT_\A}$ by $p_{v}^{(u)}\mapsto a_{u} \cdot \prod_{e\in E(\lambda_{u,v})}\theta(e)$.
\end{remark}

\begin{remark}
The indeterminates $a_{u}$ from Remark~\ref{rem:multihom} play the role of homogenizing variables for the map $\Psi_{\TT_{\A}}$. 
The model $\MM_{(\TT_{\A},\theta)}$ is naturally contained in a product of projective spaces that is indexed by the elements in ${L_{k^*}}$. 
Thus, using $a_{u}$ we guarantee that $\ker(\Psi_{\TT_{\A}})$ is the multihomogeneous ideal that defines the model in multiprojective space.
\end{remark}

The following theorem generalizes Theorems~~\ref{thm: sat staged tree}, \ref{thm:hammersleyCliff}, and~\ref{thm:sat}. 
Moreover, Theorem~\ref{thm: sat interventional DAGs} follows directly from Theorem~\ref{thm:intsat} and Example~\ref{ex: interventional DAG models}.

\begin{theorem}
\label{thm:intsat}
Let $(\TT_{\A},\theta)$ be an interventional staged tree.
\begin{itemize}
\item[(1)] There is a containment of ideals $I_{\MM(\TT_{\A})}\subset \ker(\Psi_{\TT_\A})$.
If $\mathbf{p}=\prod_{v\in V}p_{[v]}$, then $(I_{\MM(\TT_{\A})}:\mathbf{p}^{\infty})=\ker(\Psi_{\TT_\A})$.
\item[(2)] The ideal $\ker(\Psi_{\TT_\A})$ is a minimal prime of $I_{\MM(\TT_{\A})}$.
\item[(3)] The following subsets of the product of probability simplices coincide
\[V_{\geq}(I_{\MM(\TT_\A)}+\langle p_{+} -1\rangle ) = V_{\geq}(\ker \Psi_{\TT_\A} + \langle p_{+} -1\rangle)= \MM_{(\TT_\A,\theta)}.\]
\end{itemize}
\end{theorem}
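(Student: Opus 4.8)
The plan is to follow the template of Theorem~\ref{thm: sat staged tree} (\cite[Section 4.2]{DG20}), treating $\TT_\A$ as a labeled staged tree whose defining monomial map $\Psi_{\TT_\A}$ now carries one homogenizing variable $a_u$ per subtree root $u\in L_{k^\ast}$, so that the model sits in a product of projective spaces rather than a single one. The engine of the whole argument is the interventional inversion formula of Lemma~\ref{lem:intsimplex}, used together with the already-established equality $\MM_{(\TT_\A,\theta)} = V_\geq(I_{\MM(\TT_\A)}+\langle p_+-1\rangle)$ from the Proposition preceding Definition~\ref{def:interventional ideals}.

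First I would prove the containment in (1). Using the redefinition of $\Psi_{\TT_\A}$ in Remark~\ref{rem:multihom}, I would verify the monomial identity $\Psi_{\TT_\A}(p_{[v]}^{(u)}) = a_u\prod_{e\in E(\lambda_{u,v})}\theta(e)$ in $\R[\Theta]_{\TT_\A}/\mathfrak{q}_{\TT_\A}$; this is the algebraic shadow of Lemma~\ref{lem:intsimplex}(1) and holds because summing the leaf-monomials below $v$ telescopes to the path-monomial once the sum-to-one relations of $\mathfrak{q}_{\TT_\A}$ are imposed. For a generator $p_{[v]}^{(u_1)}p_{[w']}^{(u_2)} - p_{[v']}^{(u_1)}p_{[w]}^{(u_2)}$ of $I_{\MM(\TT_\A)}$, with $v,w$ in a common stage, $v'\in\ch_\TT(v)$, $w'\in\ch_\TT(w)$, and $\theta(v\to v')=\theta(w\to w')$, expanding the two monomial images shows they coincide exactly because of the equality $\theta(v\to v')=\theta(w\to w')$ (the path-monomials to $v$ and to $w$ enter symmetrically). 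Hence each generator lies in $\ker\Psi_{\TT_\A}$, giving $I_{\MM(\TT_\A)}\subset\ker\Psi_{\TT_\A}$.

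Next I would record that $\ker\Psi_{\TT_\A}$ is prime: its codomain $\R[\Theta]_{\TT_\A}/\mathfrak{q}_{\TT_\A}$ is a domain, since the sum-to-one relations eliminate one $\LL$-label at each vertex of level $\geq k^\ast$, exhibiting the quotient as a polynomial ring in the $a_u$ and a maximal independent subset of $\LL$. With primeness in hand, the inclusion $(I_{\MM(\TT_\A)}:\mathbf{p}^\infty)\subseteq\ker\Psi_{\TT_\A}$ is immediate, as $\ker\Psi_{\TT_\A}$ contains $I_{\MM(\TT_\A)}$ but no factor $p_{[v]}$ of $\mathbf{p}$. The reverse inclusion is the crux, and here I expect the main obstacle to lie. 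I would argue it by localizing at $\mathbf{p}$ and constructing an explicit inverse of the parametrization on the distinguished open set $\{\mathbf{p}\neq 0\}$: given a point $p$ with all $p_{[v]}^{(u)}\neq 0$ satisfying the binomials of $I_{\MM(\TT_\A)}$, the rule $x_{\theta(e)} := p_{[v']}^{(u)}/p_{[v]}^{(u)}$ for $e=v\to v'$ is well-defined on labels and lands in $\Theta_{\TT_\A}$ precisely because the stage binomials force the ratios attached to any two edges sharing a label to agree; this is exactly where the two superscripts $u_1,u_2$ are needed, since same-stage nodes may sit in different subtrees $\TT_{u_1},\TT_{u_2}$. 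A coordinatewise check as in the Proposition preceding Definition~\ref{def:interventional ideals} then gives $\psi_{\TT_\A}(x)=p$, so $p\in V(\ker\Psi_{\TT_\A})$. Exactly as in \cite[Section 4.2]{DG20}, this inversion upgrades to the scheme-theoretic statement that $(I_{\MM(\TT_\A)})_{\mathbf{p}}=(\ker\Psi_{\TT_\A})_{\mathbf{p}}$, and contracting back yields $(I_{\MM(\TT_\A)}:\mathbf{p}^\infty)=\ker\Psi_{\TT_\A}$, completing (1).

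Finally, parts (2) and (3) follow formally. For (2), since $(I_{\MM(\TT_\A)}:\mathbf{p}^\infty)=\ker\Psi_{\TT_\A}$ is prime, it equals the intersection of those minimal primes of $I_{\MM(\TT_\A)}$ avoiding $\mathbf{p}$; a prime that is an intersection of primes must coincide with one of them, so $\ker\Psi_{\TT_\A}$ is a minimal prime of $I_{\MM(\TT_\A)}$. For (3), the containment $I_{\MM(\TT_\A)}\subset\ker\Psi_{\TT_\A}$ gives $V_\geq(\ker\Psi_{\TT_\A}+\langle p_+-1\rangle)\subseteq V_\geq(I_{\MM(\TT_\A)}+\langle p_+-1\rangle)=\MM_{(\TT_\A,\theta)}$ by the cited Proposition, while every point of $\MM_{(\TT_\A,\theta)}$ satisfies the relations of $\ker\Psi_{\TT_\A}$ together with $p_+^{(u)}=1$ for all $u\in L_{k^\ast}$, giving the reverse containment; hence all three sets coincide.
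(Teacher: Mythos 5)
Your proposal is correct and follows essentially the same route as the paper: the containment via the identity $\Psi_{\TT_\A}(p_{[v]}^{(u)})=\bigl(\prod_{e\in E(\lambda_{r,v})}\theta(e)\bigr)\cdot t(v)$ together with $\theta(v\to v')=\theta(w\to w')$, and the saturation statement by localizing at $\mathbf{p}$ and inverting the parametrization through the ratio rule $\theta(v\to v')\mapsto p_{[v']}^{(u)}/p_{[v]}^{(u)}$ of Lemma~\ref{lem:intsimplex}, after which (2) and (3) are formal. The only difference is cosmetic: the paper carries out the inversion directly as an isomorphism of localized $\R$-algebras rather than first pointwise, and leaves the primeness of $\ker(\Psi_{\TT_\A})$ (which you justify by noting the codomain is a domain) implicit.
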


\begin{proof}
We prove the first statement in (1). For this we show that $\Psi_{\TT_\A}(f)=0$ for every $f$ in the
generating set of $I_{\MM(\TT_{\A})}$. Suppose $f= p_{[v]}^{(u_1)}p_{[w']}^{(u_2)}-p_{[v']}^{(u_1)}p_{[w]}^{(u_2)}$
is a generator of $I_{\MM(\TT_{\A})}$ as in Definition~\ref{def:imodelinvariants}.
First, note that 
\begin{equation} \label{eq:pbracketv}
\Psi_{\TT_\A}(p_{[v]}^{(u_1)})=t(v)\cdot \prod_{e\in E(\lambda_{r\to v})}\theta(e). 
\end{equation}
Using this previous polynomial expression for each of the two products in both terms appearing in $f$, we get
\[
\Psi_{\TT_\A}(f)= \left(\prod_{e\in E(\lambda_{v\to w})}\!\!\!\!\!\!\!\!\!\!\theta(e)\right)t(v)t(w)\theta(w\to w')- 
\left(\prod_{e\in E(\lambda_{v\to w})}\!\!\!\!\!\!\!\!\!\!\theta(e)\right) t(v)t(w)\theta(v\to v') =0.
\]
For the second statement in (1), we prove that the localized map
\[(\Psi_{\TT_{\A}})_{\mathbf{p}}: \left(\R[D_{\TT_\A}]/ (I_{\MM(\TT_\A)}+\langle p_{+}-1\rangle) \right)_{\mathbf{p}}\to 
\left(
\R[\Theta_{\TT_\A}]/\mathfrak{q}_{\TT_A}
\right)_{\Psi_{\TT_\A}(\mathbf{p})}\]
is an isomorphism of $\R$-algebras. 
For this we show that the map $\phi$ defined by $\theta(v\to v')\mapsto p^{(u)}_{[v']}/ 
p^{(u)}_{[v]}$, $v\in V_{u}$, is the inverse of $(\Psi_{\TT_{\A}})_{\mathbf{p}}$. 
Using a similar argument as in the first part of the proof of Lemma~\ref{lem:intsimplex} and also combining this with equation~(\ref{eq:pbracketv}), 
we obtain
\[
\Psi_{\TT_{\A}}(\phi(\theta(v\to v'))) =\Psi_{\TT_{\A}}\left(\frac{p_{[v']}^{(u)}}{p_{[v]}^{(u)}}\right)=\frac{\Psi_{\TT_\A}(p_{[v']}^{(u)})}{\Psi_{\TT_\A}(p_{[v]}^{(u)})}=\theta(v\to v').
\]
The other direction follows from a similar argument as in equation~(\ref{eq:inverse})
\[
\phi(\Psi_{\TT_{\A}}(p_{\ell}^{(u)})) = 
\prod_{e\in E(\lambda_{u,\ell})}\theta(e)=
 \prod_{i=0}^{m-1} \frac{p_{[u_{i+1}]}^{(u)}}{p_{[u_i]}^{(u)}}=
 \frac{p_{\ell}^{(u)}}{p_{[u_0]}^{(u)}}= p_{\ell}^{(u)}
\]
where $\lambda_{u,\ell}$ is denoted by its vertices $u_0=u\to u_1 \to \cdots \to u_m = \ell$. The fact that $(\Psi_{\TT_{\A}})_{\mathbf{p}}$ is an isomorphism implies that
$(I_{\MM(\TT_\A)}:\mathbf{p}^{\infty})=\ker(\Psi_{\TT_\A})$. Assertions (2) and (3)
then follow immediately from (1).
\end{proof}
\begin{remark}
In the case when $\TT_{(\GG,\I)}$ represents
the interventional DAG model $\MM_{\I}(\GG)$, we have 
$ \FF_{\GG,\I}= \ker(\Psi_{\TT_{\GG,\I}})$. If $\TT_{\GG}$ is the 
staged tree associated to a DAG model and $\I=\{\emptyset\}$, then $\ker(\Psi_{\TT_{\GG,\I}})=\ker(\Psi_{\TT_{\GG}})$.
\end{remark}

\subsection{Balanced interventional models}
\label{subsec: balanced interventional models}
Recall from subsection~\ref{subsec: balanced models} that the family of balanced staged tree models generalize decomposable models.  
By Theorem~\ref{thm: balanced iff equal}, we have that a staged tree $\TT$ is balanced if and only if $\ker(\Psi_\TT^{\toric})=\ker(\Psi_\TT)$.
The same is true for interventional staged tree models.  
%
\begin{theorem}
\label{thm: coincidence}
Suppose $\TT_{\A}$ is an interventional staged tree. 
The tree $\TT_\A$ is balanced if and only if
\[
 \ker(\Psi_{\TT_\A}^{\toric}) = \ker(\Psi_{\TT_\A}).
\] 
In particular, $\ker(\Psi_{\TT_\A})$ is a toric ideal.
\end{theorem}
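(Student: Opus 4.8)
The plan is to run the argument of \cite[Theorem 10]{DG20} (our Theorem~\ref{thm: balanced iff equal}) in the interventional setting, using the saturation identity $(I_{\MM(\TT_{\A})}:\mathbf{p}^\infty)=\ker(\Psi_{\TT_\A})$ from Theorem~\ref{thm:intsat}(1) in place of its non-interventional analogue. Throughout, recall that $\ker(\Psi_{\TT_\A}^{\toric})\subseteq\ker(\Psi_{\TT_\A})$ holds automatically since $\Psi_{\TT_\A}=\pi\circ\Psi_{\TT_\A}^{\toric}$, and that the final ``in particular'' clause is immediate once the two ideals coincide, because $\ker(\Psi_{\TT_\A}^{\toric})$ is a toric ideal by definition. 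The whole statement therefore reduces to deciding, for each generator of the ideal of model invariants $I_{\MM(\TT_\A)}$, whether it already lies in the toric ideal $\ker(\Psi_{\TT_\A}^{\toric})$.

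The key step is to evaluate $\Psi_{\TT_\A}^{\toric}$ on a stage generator. I would fix $v\in V_{u_1}$ and $w\in V_{u_2}$ in the same stage, matched children $v'\in\ch_\TT(v)$, $w'\in\ch_\TT(w)$ with $\theta(v\to v')=\theta(w\to w')$, and set $f=p_{[v]}^{(u_1)}p_{[w']}^{(u_2)}-p_{[v']}^{(u_1)}p_{[w]}^{(u_2)}$. Using the identity $\Psi_{\TT_\A}^{\toric}(p_{[v]}^{(u_1)})=t(v)\prod_{e\in E(\lambda_{r\to v})}\theta(e)$ (the lift of~\eqref{eq:pbracketv} before passing to the quotient), and likewise for the other three bracket classes, the path monomials and the matched label $\theta(v\to v')=\theta(w\to w')$ factor out of both terms, leaving
\[
\Psi_{\TT_\A}^{\toric}(f)=C\left(t(v)\,t(w')-t(v')\,t(w)\right),
\]
where $C$ is a monomial and $v'=v_i$, $w'=w_i$ for the common child index $i$. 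Expanding $t(v)=\sum_j\theta(v\to v_j)t(v_j)$ and $t(w)=\sum_j\theta(v\to v_j)t(w_j)$ collapses the bracket to $\sum_{j\neq i}\theta(v\to v_j)\bigl(t(v_j)t(w_i)-t(v_i)t(w_j)\bigr)$. Crucially, the homogenizing factors $a_u$ and every edge label in $\A$ appear only inside the nonzero common monomial $C$, so whether $\Psi_{\TT_\A}^{\toric}(f)$ vanishes is governed entirely by the interpolating polynomials in the $\LL$-labels, exactly as in the non-interventional case; in particular $\Psi_{\TT_\A}^{\toric}(f)=0$ precisely when the balanced condition $t(v_i)t(w_j)=t(v_j)t(w_i)$ holds for the pair $v,w$.

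For the forward direction, if $\TT_\A$ is balanced then the previous paragraph gives $\Psi_{\TT_\A}^{\toric}(f)=0$ for every generator $f$ of $I_{\MM(\TT_\A)}$, whence $I_{\MM(\TT_\A)}\subseteq\ker(\Psi_{\TT_\A}^{\toric})$. Since $\ker(\Psi_{\TT_\A}^{\toric})$ is the kernel of a map into the domain $\R[\Theta]_{\TT_\A}$ it is prime, and $\mathbf{p}$ maps to a nonzero element, so it is saturated with respect to $\mathbf{p}$. Saturating the containment and invoking Theorem~\ref{thm:intsat}(1) then yields
\[
\ker(\Psi_{\TT_\A})=(I_{\MM(\TT_\A)}:\mathbf{p}^\infty)\subseteq(\ker(\Psi_{\TT_\A}^{\toric}):\mathbf{p}^\infty)=\ker(\Psi_{\TT_\A}^{\toric}),
\]
which together with the automatic reverse containment gives equality. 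For the converse, if $\TT_\A$ is not balanced I would choose a pair $v,w$ and a child index witnessing the failure; the corresponding generator $f$ lies in $I_{\MM(\TT_\A)}\subseteq\ker(\Psi_{\TT_\A})$ by Theorem~\ref{thm:intsat}(1), yet $\Psi_{\TT_\A}^{\toric}(f)\neq0$, so $f\in\ker(\Psi_{\TT_\A})\setminus\ker(\Psi_{\TT_\A}^{\toric})$ and the containment is strict.

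I expect the main obstacle to be the converse step: verifying that when balancedness fails for some child pair $(i,j)$, the \emph{entire} sum $\sum_{j}\theta(v\to v_j)\bigl(t(v_j)t(w_i)-t(v_i)t(w_j)\bigr)$ is genuinely nonzero in $\R[\Theta]_{\TT_\A}$, rather than having the offending terms cancel against one another. This is precisely the delicate point settled in the proof of \cite[Theorem 10]{DG20}, where one uses that the interpolating polynomials attached to distinct children involve the label sets of distinct subtrees and so cannot interfere. The argument should transfer essentially verbatim, because passing to the interventional setting only decorates each monomial with the inert factor $C$ built from the homogenizing variables $a_u$ and the intervention labels $\A$, none of which interacts with the $\LL$-labels carried by the interpolating polynomials.
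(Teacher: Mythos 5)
Your forward direction is sound and essentially reproduces the paper's argument: showing $I_{\MM(\TT_\A)}\subseteq\ker(\Psi_{\TT_\A}^{\toric})$ under balancedness, then using the saturation identity of Theorem~\ref{thm:intsat}(1) together with primeness of both kernels to upgrade the chain $I_{\MM(\TT_\A)}\subseteq\ker(\Psi_{\TT_\A}^{\toric})\subseteq\ker(\Psi_{\TT_\A})$ to an equality. The paper does the same thing, except that it routes the containment through the auxiliary ideal $I_{\mathrm{paths}}$ of \cite[Section 5.1]{DG20}, generated by $p_{[v_i]}^{(u_1)}p_{[w_j]}^{(u_2)}-p_{[v_j]}^{(u_1)}p_{[w_i]}^{(u_2)}$ over \emph{all} pairs $i\neq j$ of matched children, which satisfies $I_{\MM(\TT_\A)}\subseteq I_{\mathrm{paths}}$ and whose generators map under $\Psi_{\TT_\A}^{\toric}$ to a nonzero monomial times a \emph{single} difference $t(v_i)t(w_j)-t(v_j)t(w_i)$.

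The gap is in your converse, and it is exactly the point you flag. A generator of $I_{\MM(\TT_\A)}$ indexed by the child $i$ maps to $C\sum_{j\neq i}\theta(v\to v_j)\bigl(t(v_j)t(w_i)-t(v_i)t(w_j)\bigr)$, so the failure of one balanced condition does not by itself force this sum to be nonzero. Your proposed resolution — that the interpolating polynomials of distinct children ``involve the label sets of distinct subtrees and so cannot interfere'' — is false in general: sibling subtrees routinely share stages and hence labels (already in Figure~\ref{fig:tree1} the subtrees below $0$ and $1$ share the labels $s_7,\ldots,s_{10}$), and this is not the mechanism used in \cite[Theorem 10]{DG20}, which instead passes to $I_{\mathrm{paths}}$ precisely to isolate one balanced condition per generator. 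To close the gap, either adopt the paper's route — note that $I_{\mathrm{paths}}\subseteq\ker(\Psi_{\TT_\A})$ always holds because each interpolating polynomial becomes $1$ modulo $\mathfrak{q}_{\TT_\A}$, so equality of the kernels forces $I_{\mathrm{paths}}\subseteq\ker(\Psi_{\TT_\A}^{\toric})$ and hence, since $\R[\Theta]_{\TT_\A}$ is a domain and the accompanying monomial is nonzero, each individual balanced condition — or else argue that the sibling edge labels $\theta(v\to v_j)$ are indeterminates occurring in none of the $t(v_{j'}),t(w_{j'})$, so the displayed sum is a linear form in fresh variables and vanishes only if every coefficient does. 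The latter argument needs the hypothesis that no vertex shares a stage with one of its descendants (automatic for stratified trees), whereas the theorem as stated does not assume $\TT_\LL$ stratified, so the $I_{\mathrm{paths}}$ route is the safer one.
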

\begin{proof}
We use the ideal $I_{\mathrm{paths}}$ defined in \cite[Section 5.1]{DG20}. Its definition translates
immediately for interventional staged trees and we refer to the aforementioned section for a detailed
description of its properties. This ideal satisfies $I_{\MM(\TT_\A)}\subset I_{\mathrm{paths}}$. Moreover,
if
$\TT_{\A}$ is balanced $I_{\mathrm{paths}} \subset  \ker(\Psi_{\TT_\A}^{\toric})$. Thus, we
arrive at a chain of containments $I_{\mathcal{M}(\TT_\A)}\subset  \ker(\Psi_{\TT_\A}^{\toric})\subset
 \ker(\Psi_{\TT_\A})$. Using Theorem~\ref{thm:intsat} and the element $\mathbf{p}$ from its statement,
 we localize at $\mathbf{p}$ to obtain the equalities
 $[I_{\mathcal{M}(\TT_\A)} ]_{\mathbf{p}}= [ \ker(\Psi_{\TT_\A}^{\toric}) ]_{\mathbf{p}}=
[ \ker(\Psi_{\TT_\A})]_{\mathbf{p}}$. 
However, $\ker(\Psi_{\TT_\A}^{\toric}) $ and $\ker(\Psi_{\TT_\A}) $ are both prime ideals that are equal after localization. 
Hence, they must be equal.
For the other direction, note that the containment $I_{\mathrm{paths}}\subset \ker(\Psi_{\TT_\A}) $
always holds. This means $I_{\mathrm{paths}} \subset \ker(\Psi_{\TT_\A}^{\toric}) $ by assumption.
Applying the map $\Psi_{\TT_\A}^{\toric}$ to the generators of $I_{\mathrm{paths}}$ yields
the balanced condition.
\end{proof}


Using the main result in \cite{AD19} we see that in fact the toric ideals in Theorem~\ref{thm: coincidence}
have quadratic Gr\"obner bases with squarefree terms. We summarize this in the next theorem.
\begin{theorem}
\label{thm: grobner basis}
Suppose $\TT_\A$ is a balanced interventional staged tree model, then the toric ideal $\ker(\Psi_{\TT_{\A}})$ is generated by a quadratic Gr\"obner basis with squarefree initial ideal.
\end{theorem}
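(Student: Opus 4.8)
The plan is to reduce the statement to the main result of \cite{AD19}, which asserts that the toric ideal of a staged tree admits a quadratic Gr\"obner basis whose initial ideal is squarefree. The first step is to replace the model ideal $\ker(\Psi_{\TT_\A})$ by the toric ideal $\ker(\Psi_{\TT_\A}^{\toric})$. Since $\TT_\A$ is assumed balanced, Theorem~\ref{thm: coincidence} yields $\ker(\Psi_{\TT_\A}) = \ker(\Psi_{\TT_\A}^{\toric})$, so it suffices to produce the desired Gr\"obner basis for the toric ideal. This is the only place where the balancedness hypothesis is used.

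Next I would observe that, forgetting the partition $\A\sqcup\LL$ of its label set and the choice of splitting level, the interventional staged tree $\TT_\A$ is itself a staged tree in the sense of Definition~\ref{def: staged tree}. Indeed, the framework already treats $\TT_\A$ as having well-defined stages $S_0,\ldots,S_m$ (as in Definition~\ref{def:imodelinvariants}): axiom (1) holds at every vertex because $\TT_\A$ is quasi-staged and its $\A$-labeled out-edges are distinctly labeled, while axiom (2) follows from condition (4) of Definition~\ref{def: interventional staged tree} for the $\LL$-labeled vertices, from the disjointness of $\A$ and $\LL$ for mixed pairs, and from the partitioned structure for the $\A$-labeled vertices. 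Consequently the toric ideal of $\TT_\A$ in the sense of Definition~\ref{def: toric ideal of a staged tree} is well defined.

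The crux is then to identify $\ker(\Psi_{\TT_\A}^{\toric})$ from Definition~\ref{def:interventional ideals} with this ordinary staged-tree toric ideal. For a leaf $\ell\in{\bf i}_{\TT_u}$ with $u\in L_{k^\ast}$, both parametrizing monomial maps send $p_\ell^{(u)}$ to the product $\prod_{e\in E(\lambda(\ell))}\theta(e)$, since $\lambda(\ell)$ decomposes as $\lambda_{r,u}$ followed by $\lambda_{u,\ell}$. The map of Definition~\ref{def: toric ideal of a staged tree} differs only by the single homogenizing factor $z$, whereas in Definition~\ref{def:interventional ideals} the role of the homogenizer is played by the products $\prod_{e\in E(\lambda_{r,u})}\theta(e)$, that is, the indeterminates $a_u$ of Remark~\ref{rem:multihom}. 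Because distinct interventions carry distinct $\A$-labels, a binomial lies in $\ker(\Psi_{\TT_\A}^{\toric})$ if and only if its $a_u$-parts agree, which forces, for each $u$, equally many $p^{(u)}$-factors on the two sides, hence equally many $p$-factors overall; this is exactly the homogeneity imposed by the factor $z$ in the staged-tree map, and the converse inclusion is immediate. Thus the two kernels coincide, and $\ker(\Psi_{\TT_\A}^{\toric})$ is genuinely the toric ideal of the staged tree $\TT_\A$.

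With this identification in hand, applying the main theorem of \cite{AD19} to $\TT_\A$ regarded as a staged tree produces a quadratic Gr\"obner basis for $\ker(\Psi_{\TT_\A}^{\toric})=\ker(\Psi_{\TT_\A})$ with squarefree initial ideal, which is the claim; stratification of $\TT_\A$, if required by \cite{AD19}, is guaranteed by our standing assumption that $\TT_\LL$ is uniform and stratified. I expect the main obstacle to be the bookkeeping in the third step: one must check that passing from the single homogenizing variable $z$ used for abstract staged trees to the per-intervention homogenizers $a_u$ neither enlarges nor shrinks the toric ideal, and verify that no hypothesis of \cite{AD19} is violated by the presence of the $\A$-labels when $\TT_\A$ is viewed as an ordinary staged tree.
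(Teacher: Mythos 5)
Your proposal is correct and follows essentially the same route as the paper, which proves this theorem simply by combining Theorem~\ref{thm: coincidence} (balancedness gives $\ker(\Psi_{\TT_\A})=\ker(\Psi_{\TT_\A}^{\toric})$) with the main result of \cite{AD19} applied to $\TT_\A$ viewed as a staged tree. Your additional bookkeeping --- checking that $\TT_\A$ is an ordinary staged tree and that the per-intervention homogenizers $a_u$ yield the same kernel as the single homogenizer $z$ --- is exactly the detail the paper leaves implicit.
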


Theorem~\ref{thm: grobner basis} says that the vanishing ideal of an balanced interventional staged tree model is generated by a finite set of polynomials of a simple form; namely, polynomials of the form $xy - zw$ for some indeterminates $x,y,z,w$. 
As discussed in the introduction, one can then apply tests based on hypothesis tests via U-statistics such as those developed in \cite{SDL22} to reject model membership for a given interventional setting by identifying one such polynomial relation that it does not satisfy.

\subsection{Decomposable interventional DAG models}
\label{subsec: decomposable interventional models}
By Example~\ref{ex: interventional DAG models}, interventional staged tree models generalize interventional DAG models.  
Hence, as a corollary to Theorem~\ref{thm: coincidence}, we obtain the following generalization of Theorem~\ref{thm: perfect equals toric}.
\begin{corollary}
\label{cor:idagstoric}
Let $\GG$ be a DAG and $\I$ a collection of intervention targets, and let $\TT_{(\GG,\I)}$ denote the interventional staged tree model representation
of $\MM_{\I}(\GG)$. 
Then $\TT_{(\GG,\I)}$ is balanced if and only if 
$
 \ker(\Psi_{\TT_{(\GG,\I)}}^{\toric}) = \FF_{\GG,\I}.
$
In particular, $\FF_{\GG,\I}$ is a toric ideal.
\end{corollary}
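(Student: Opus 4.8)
The plan is to reduce the corollary to Theorem~\ref{thm: coincidence} by identifying the interventional staged tree model ideal of $\TT_{(\GG,\I)}$ with the $\I$-factorization ideal $\FF_{\GG,\I}$. First I would verify that
\[
\ker(\Psi_{\TT_{(\GG,\I)}}) = \FF_{\GG,\I} = \ker(\Phi_{\GG,\I}).
\]
To do this I would compare the two ring maps directly on generators. Both maps have the same source $\R[D_\I] = \R[D_{\TT_{(\GG,\I)}}]$ once we identify the indeterminate $p^{(I)}_\xx$ with $p^{(v_I)}_\ell$, where $v_I\in L_{k^\ast}$ is the node indexing the intervention $I$ and $\ell$ is the leaf of $\TT_{v_I}$ associated to the outcome $\xx$. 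Using the labeling $\theta$ constructed in Example~\ref{ex: interventional DAG models} and the reduction in Remark~\ref{rem:multihom}, the image of $p^{(I)}_\xx$ under $\Psi_{\TT_{(\GG,\I)}}$ is $a_{v_I}\prod_{e\in E(\lambda_{v_I,\ell})}\theta(e) = a_{v_I}\prod_{j\in I}f^{(I)}(x_j\mid\xx_{\pa_\GG(j)})\prod_{j\notin I}f^{(\emptyset)}(x_j\mid\xx_{\pa_\GG(j)})$, which is precisely the image of $p^{(I)}_\xx$ under $\Phi_{\GG,\I}$ up to the homogenizing factor $a_{v_I}$.

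The one point requiring care is that the two target rings encode the model constraints through different mechanisms: in $\Phi_{\GG,\I}$ the invariance constraints $q^{(I)}_{j;\cdots} = q^{(J)}_{j;\cdots}$ for $j\notin I\cup J$ are imposed through the ideal $\mathfrak{q}_\I$, whereas in $\Psi_{\TT_{(\GG,\I)}}$ they are built into the label set $\LL$ via condition~(4) of Definition~\ref{def: interventional staged tree}, which forces the shared observational factors to carry a single common label. I would check that the natural identification sending $f^{(\emptyset)}(x_j\mid\xx_{\pa_\GG(j)})$ to the class of $q^{(\emptyset)}_{j;x_j;\xx_{\pa_\GG(j)}}$ and sending $f^{(I)}(x_j\mid\xx_{\pa_\GG(j)})$, for $j\in I$, to the class of $q^{(I)}_{j;x_j;\xx_{\pa_\GG(j)}}$ gives an isomorphism $\R[\Theta]_{\TT_{(\GG,\I)}}/\mathfrak{q}_{\TT_{(\GG,\I)}}\cong\R[U_{\GG,\I}]/\mathfrak{q}_\I$ carrying the homogenizing variables $a_{v_I}$ to the classes that render $\Phi_{\GG,\I}$ multihomogeneous per intervention. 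Since the two maps then agree on the generators $p^{(I)}_\xx$, their kernels coincide, giving $\ker(\Psi_{\TT_{(\GG,\I)}}) = \FF_{\GG,\I}$.

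With this identification in hand the corollary follows immediately. Since $\TT_{(\GG,\I)}$ is an interventional staged tree by Example~\ref{ex: interventional DAG models}, Theorem~\ref{thm: coincidence} applies and yields that $\TT_{(\GG,\I)}$ is balanced if and only if $\ker(\Psi_{\TT_{(\GG,\I)}}^{\toric}) = \ker(\Psi_{\TT_{(\GG,\I)}})$. Substituting $\ker(\Psi_{\TT_{(\GG,\I)}}) = \FF_{\GG,\I}$ gives the claimed equivalence that $\TT_{(\GG,\I)}$ is balanced if and only if $\ker(\Psi_{\TT_\A}^{\toric}) = \FF_{\GG,\I}$. For the final assertion, Theorem~\ref{thm: coincidence} guarantees that when $\TT_{(\GG,\I)}$ is balanced the ideal $\ker(\Psi_{\TT_{(\GG,\I)}})$ is toric, and hence so is $\FF_{\GG,\I}$.

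The main obstacle is the first step: establishing the ring-map identification precisely, since it requires matching the source indeterminates, verifying that the two parameterizations agree factor by factor along root-to-leaf paths, and reconciling the two different mechanisms (the explicit ideal $\mathfrak{q}_\I$ versus the shared-label condition~(4)) by which the invariance constraints of the interventional model are imposed. Once that bookkeeping is in place, the remainder is a formal consequence of Theorem~\ref{thm: coincidence}.
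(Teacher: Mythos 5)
Your proposal is correct and follows essentially the same route as the paper: identify $\ker(\Psi_{\TT_{(\GG,\I)}})$ with $\FF_{\GG,\I}=\ker(\Phi_{\GG,\I})$ by matching the two parameterizations on generators (noting that the homogenizing factors are harmless since $\Phi_{\GG,\I}$ is already homogeneous), and then apply Theorem~\ref{thm: coincidence}. The paper states this identification in one line where you carefully spell out the bookkeeping, but the argument is the same.
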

\begin{proof}
It suffices to note that $\FF_{\GG,\I}=\ker(\Psi_{\TT_{(\GG,\I)}})$ and apply Theorem~\ref{thm: coincidence}.
\end{proof}

Theorem~\ref{thm: perfectly balanced and stratified} provides a characterization of when $\ker(\Psi_{\TT_\GG}) = \ker(\Psi_{\TT_\GG}^{\toric})$ in terms of the combinatorics of the DAG $\GG$; namely that it is a perfect DAG.  
The following theorem generalizes this combinatorial characterization to interventional DAG models via their associated $\I$-DAGs.  

\begin{theorem}
\label{thm: classification of balanced int staged trees}
Let $\TT_{(\GG,\I)}$ be an interventional staged tree for a DAG $\GG = ([p],E)$ and a collection of intervention targets $\I$.  
Then the following are equivalent:
\begin{enumerate}
	\item $\TT_{(\GG,\I)}$ is balanced,
	\item $\ker(\Psi_{\TT_{(\GG,\I)}}^{\toric}) = \FF_{\GG,\I}$, and
	\item $\GG$ is a perfect DAG, and 
 for all $I,J\in\I$, we have
	$
	I\cup J = \overline{\an}_\GG(I\cup J).
	$
\end{enumerate}
\end{theorem}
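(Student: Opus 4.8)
The equivalence of (1) and (2) is immediate from Corollary~\ref{cor:idagstoric} together with the identity $\FF_{\GG,\I}=\ker(\Psi_{\TT_{(\GG,\I)}})$, so the real work is to prove (1)$\Leftrightarrow$(3). The plan is to analyze Definition~\ref{def: balanced} directly on the tree $\TT_{(\GG,\I)}$ built in Example~\ref{ex: interventional DAG models}. Recall that $\TT_{(\GG,\I)}$ is one copy $\TT_{v_I}\cong\TT_\GG$ of the DAG tree for each $I\in\I$, glued at the root, where the edge into level $m+1$ of the copy $\TT_{v_I}$ carries an $f^{(I)}$-label when $m+1\in I$ and an $f^{(\emptyset)}$-label otherwise. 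First I would record the resulting stage structure: two nodes lying in copies $\TT_{v_I}$ and $\TT_{v_J}$ at a common level $c$, with prefixes $\xx$ and $\xx'$, are in the same stage precisely when $\xx_{\pa_\GG(c)}=\xx'_{\pa_\GG(c)}$ and, if $I\ne J$, additionally $c\notin I\cup J$ (so that both outgoing label families are the observational ones). Thus the balanced conditions split into \emph{within-copy} pairs and \emph{cross-copy} pairs $(I,J)$ with splitting coordinate $c\notin I\cup J$, and since $\emptyset\in\I$ every single target $I$ is automatically tested against $\emptyset$.

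For (3)$\Rightarrow$(1) I would treat the two kinds of pairs separately. Writing $t_I(\,\cdot\,)$ for the interpolating polynomial computed with the copy-$I$ labels, a within-copy pair in $\TT_{v_I}$ is the image of a stage pair of $\TT_\GG$ under the injective relabeling $f^{(\emptyset)}_m\mapsto f^{(I)}_m$ for $m\in I$; since $\GG$ is perfect, Theorem~\ref{thm: perfectly balanced and stratified} gives that $\TT_\GG$ is balanced, and a relabeling of indeterminates preserves the defining identities $t(v_i)t(w_j)=t(w_i)t(v_j)$, so these pairs are balanced. For a cross-copy pair at splitting coordinate $c\notin I\cup J$ I would invoke the bijective criterion of Lemma~\ref{lem: simplified balanced} and reuse the involution $\Phi$ from the proof of Theorem~\ref{thm: perfectly balanced and stratified} (keep the continuation at coordinates $k$ with $c\in\an_\GG(k)$, swap it otherwise). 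The key observation is that $\Phi$ pairs a copy-$I$ factor with a copy-$J$ factor exactly at those $k$ with $c\in\an_\GG(k)$; ancestral closure of $I\cup J$ forces every such descendant $k$ of $c$ to lie outside $I\cup J$ (otherwise $c\in\an_\GG(k)\subseteq\overline{\an}_\GG(I\cup J)=I\cup J$), so both copies carry the common observational label $f^{(\emptyset)}_k$ there and the cross-copy matching is legitimate. At the remaining coordinates $\Phi$ matches factors within a single copy, where the families agree automatically, and the argument-matching is verbatim the perfect-DAG computation.

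For (1)$\Rightarrow$(3) I would argue the contrapositive. If $\GG$ is not perfect, then $\TT_\GG$ is not balanced by Theorem~\ref{thm: perfectly balanced and stratified}, and since $\emptyset\in\I$ the copy $\TT_{v_\emptyset}\cong\TT_\GG$ sits inside $\TT_{(\GG,\I)}$ with observational labels unchanged, so the offending within-copy pair still violates the balanced condition. If instead $\GG$ is perfect but $I\cup J\ne\overline{\an}_\GG(I\cup J)$ for some $I,J$, I would walk along a directed path into $I\cup J$ from outside and let $a\to k$ be its last edge, so that $k\in I\cup J$, $a\notin I\cup J$, and $a\in\pa_\GG(k)$. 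Taking the cross-copy pair consisting of the nodes of $\TT_{v_I}$ and $\TT_{v_J}$ sharing the prefix $\xx=x_1\cdots x_{a-1}$ (a legitimate same-stage pair, as $a\notin I\cup J$), the balanced condition for children indexed by $s\ne r\in[d_a]$ reads $t_I(\xx s)\,t_J(\xx r)=t_J(\xx s)\,t_I(\xx r)$. Since $a\in\pa_\GG(k)$, every occurrence of the conditional label for $k$ inside $t_I(\xx s)$ carries $x_a=s$, and that label is $f^{(I)}_k$ when $k\in I$; this indeterminate appears nowhere on the right-hand side with $x_a=s$, because the copy-$I$ factor there is $t_I(\xx r)$ (forcing $x_a=r$) and the copy-$J$ factor never uses an $f^{(I)}$-label. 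The symmetric argument with $f^{(J)}_k$ handles $k\in J$. Hence a monomial on one side cannot be matched on the other, the identity fails, and $\TT_{(\GG,\I)}$ is not balanced.

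The main obstacle is the cross-copy analysis, and in particular the bookkeeping that pins down exactly where the two copies must agree. In the forward direction this is the verification that $\Phi$ only ever pairs a copy-$I$ label with a copy-$J$ label at descendants of the splitting coordinate, so that ancestral closure is precisely the hypothesis needed to force those pairings onto the shared observational labels; in the reverse direction it is the monomial non-cancellation argument, which relies on distinct conditioning values of $X_a$ yielding distinct label indeterminates and on $f^{(I)}$- and $f^{(J)}$-labels being disjoint. A minor edge case to dispatch is $d_k=1$, where no $s\ne r$ exists; this is absorbed by the standing assumption that every variable has at least two states.
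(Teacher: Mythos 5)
Your proposal is correct and follows essentially the same route as the paper: (1)$\Leftrightarrow$(2) via Corollary~\ref{cor:idagstoric}, the forward direction of (1)$\Leftrightarrow$(3) by splitting stage pairs into within-copy pairs (handled by Theorem~\ref{thm: perfectly balanced and stratified}) and cross-copy pairs (handled by Lemma~\ref{lem: simplified balanced} with the same involution $\Phi$, using ancestral closure to force the cross-matched factors onto shared observational labels), and the contrapositive by locating a last edge $a\to k$ into $I\cup J$. Your only deviation is cosmetic: in the reverse direction you phrase the failure as monomial non-cancellation in the interpolating-polynomial identity, whereas the paper phrases it as the impossibility of any bijection $\Phi$ satisfying the matching equations; the underlying facts used (distinct conditioning values of the parent give distinct indeterminates, and $f^{(I)}$-, $f^{(J)}$-, $f^{(\emptyset)}$-labels are pairwise distinct at nodes of $I\cup J$) are identical.
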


\begin{proof}
The equivalence of (1) and (2) is established by Corollary~\ref{cor:idagstoric}.   
Hence, it suffices to show that (1) and (3) are equivalent.  

Suppose first that $\GG$ is a perfect DAG and that for all $I,J\in\I$, it holds that $I\cup J = \overline{\an}_\GG(I\cup J)$.  
Without loss of generality, we assume that the variable ordering of $\TT_{(\GG,\I)}$ is $\pi = 12\cdots p$.  
We note first that the staged tree $\TT_\GG$ is stratified. This 
fact combined with the definition of the labeling in equation~(\ref{eq:stratlabeling}) from Example~\ref{ex: interventional DAG models}  
shows that $\TT_{(\GG,\I)}$ is stratified.

Let $v,w\in V_\I$ be two vertices in the same stage, and suppose they are at level $\ell$ of $\TT_{(\GG,\I)}$.  
Note that $\TT_{(\GG,\I)}$ has splitting level $k = 1$, so without loss of generality, we know that $\ell$ is at least the splitting level of $\TT_{(\GG,\I)}$.  
(This is because level $0$ contains only one node: the root.)
So let $v_i,v_j$ and $w_i,w_j$ be children of $v$ and $w$, respectively, such that $\theta(v\rightarrow v_i) = \theta(w\rightarrow w_i)$ and $\theta(v\rightarrow v_j) = \theta(w\rightarrow w_j)$, where $\theta : V_{(\GG,\I)}\longrightarrow \A\sqcup\LL$ is the labeling of $\TT_{(\GG,\I)}$.  
We need to check that
\begin{equation}
\label{eqn: to be checked}
t(v_i)t(w_j) = t(w_i)t(v_j)
\end{equation}
Suppose first that $v,w\in\TT_u$ for some $u\in L_1$, the splitting level of $\TT_{(\GG,\I)}$.  
Then $\TT_u$ is a staged tree  for a perfect DAG.  
Hence, by Theorem~\ref{thm: perfectly balanced and stratified}, we know that equation~\eqref{eqn: to be checked} holds.  
Therefore, we can now assume that $v\in\TT_u$ and $w\in\TT_y$ for $u\neq y\in L_1$.  
In particular $\TT_u$ encodes the interventions of  some $I\in \I$
and $\TT_y$ encodes the interventions of some $J\in \I$.
Since $\TT_{(\GG,\I)}$ is stratified, it follows that $v,w$ are in
the same level, hence $v = x_1^{(I)}\cdots x_i^{(I)} \in \RR_{[i]}^{(I)}$ 
 and $w = x_1^{\prime (J)}\cdots x_i^{\prime(J)} \in \RR_{[i]}^{(J)}$  for some $i\geq 1$.  
Thus, to show that $\TT_{(\GG,\I)}$ is balanced, we must show that 
\[
t(x_1^{(I)}\cdots x_i^{(I)}s)t(x_1^{\prime(J)}\cdots x_i^{\prime(J)} r) = t(x_1^{(I)}\cdots x_i^{(I)}r)t(x_1^{\prime (J)}\cdots x_i^{\prime(J)} s),
\]
whenever $s\neq r\in[d_{i+1}]$. By \cite[Lemma 3.2]{DS21} 
this holds if and only if there is a bijection

\begin{equation*}
\begin{split}
\Phi &: \RR_{[p]\setminus[i+1]}^{(I)}\times\RR_{[p]\setminus[i+1]}^{(J)} \longrightarrow \RR_{[p]\setminus[i+1]}^{(I)}\times\RR_{[p]\setminus[i+1]}^{(J)};	\\
 & (y_{i+1}\cdots y_p, y_{i+2}^\prime\cdots y_p^\prime) \longmapsto (z_{i+1}\cdots z_p, z_{i+2}^\prime\cdots z_p^\prime)	\\
\end{split}
\end{equation*}
such that for all $k\geq i+2$ and all $s\neq r\in[d_{i+1}]$
\begin{equation}
\label{eqni: simplified balanced two}
\begin{split}
f^{(I)}&(y_k\mid (x_1\cdots x_i,s,y_{i+2}\cdots y_p)_{\pa_\GG(k)})f^{(J)}(y_k^\prime\mid (x_1^\prime\cdots x_i^\prime,r,y_{i+2}^\prime\cdots y_p^\prime)_{\pa_\GG(k)}) \\
&= f^{(J)}(z_k\mid (x_1^\prime\cdots x_i^\prime,s,z_{i+2}\cdots z_p)_{\pa_\GG(k)})f^{(I)}(z_k^\prime\mid (x_1\cdots x_i,r,z_{i+2}^\prime\cdots z_p^\prime)_{\pa_\GG(k)}).
\end{split}
\end{equation}
We included the superscripts $(I),(J)$ in the definition of the domain
and range of $\Phi$ for clarity. However, we dropped their use in the sequences of
outcomes of equation~(\ref{eqni: simplified balanced two}), the superscript in this case is taken on the
conditional probabilities $f^{(I)}(\cdot|\cdot)$ and $f^{(J)}(\cdot|\cdot)$ respectively.
Since $\TT_{(\GG,\I)}$ is an interventional staged tree, we have that
\[
f^{(I)}(x_k \mid (x_1\cdots x_i,s,x_{i+2}\cdots x_p)_{\pa_\GG(k)}) = f^{(J)}(x_k \mid (x_1\cdots x_i,s,x_{i+2}\cdots x_p)_{\pa_\GG(k)})
\]
whenever $k\notin I\cup J$.  The fact that $v,w$ are in the same stage, combined with the
fact that $\TT_{(\GG,\I)}$ is compatibly labeled, implies that
\[
f^{(I)}(x_{i+1}|(x_1\cdots x_{i})_{\pa_{\GG} (i+1)})=f^{(J)}(x_{i+1}|(x_1'\cdots x_{i}')_{\pa_{\GG}(i+1)})
\;\;\;\;\; \text{ for all } x_{i+1}\in[d_{i+1}].
\]
Thus, it follows that $i+1\notin I\cup J$. 

%
Since $I\cup J = \overline{\an}_\GG(I\cup J)$ and $i+1\notin I\cup J$ then  no element
of $I$ or $J$ is a descendant of $i+1$.  
Hence, for $k\in\de_\GG(i+1)\cap\{i+2,\ldots,p\}$
\[
f^{(I)}(y_k \mid (x_1\cdots x_i,s,y_{i+2}\cdots y_p)_{\pa_\GG(k)}) = f^{(J)}(y_k \mid (x_1'\cdots x_i',s,y_{i+2}\cdots y_p)_{\pa_\GG(k)})
\]
for any $s\in[d_{i+1}]$ and any $y_{i+2}\cdots y_p\in\RR_{[p]\setminus[i+1]}$.  
On the other hand, if $k\in\{i+2,\ldots, p\}$ is not a descendant of $i+1$ then $i+1$ is certainly not a parent of $k$.  
Based on these two observations, we see that we can use the same bijection $\Phi$ as used in the proof of Theorem~\ref{thm: perfectly balanced and stratified}.  
Hence, $\TT_{(\GG,\I)}$ is balanced.

To see the other direction, we prove the contrapositive:   
Suppose that either 
\begin{enumerate}
	\item $\GG$ is not perfect, or
	\item there exists $I,J\in\I$ such that $I\cup J \neq \overline{\an}_\GG(I\cup J)$.  
\end{enumerate}
In case~($1$), recall that the splitting level of $\TT_{(\GG,\I)}$ is level $k^\ast = 1$. 
Since $\GG$ is not perfect then, by Theorem~\ref{thm: perfectly balanced and stratified}, the subtree $\TT_v$ for any $v\in L_1$ (the splitting level of $\TT_{(\GG,\I)}$) is not balanced.   
If follows directly from Definition~\ref{def: balanced} that $\TT_{(\GG,\I)}$ is not balanced.  

In case~($2$), just as in the proof of the other direction, we know that $\TT_{(\GG,\I)}$ is balanced if and only if there is a bijection 
\begin{equation*}
\begin{split}
\Phi &: \RR_{[p]\setminus[i+1]}^{(I)}\times\RR_{[p]\setminus[i+1]}^{(J)} \longrightarrow \RR_{[p]\setminus[i+1]}^{(I)}\times\RR_{[p]\setminus[i+1]}^{(J)};	\\
\Phi &: (y_{i+1}\cdots y_p, y_{i+2}^\prime\cdots y_p^\prime) \longmapsto (z_{i+1}\cdots z_p, z_{i+2}^\prime\cdots z_p^\prime)	\\
\end{split}
\end{equation*}
such equation~\eqref{eqni: simplified balanced two} is satisfied for all $k\geq i+2$ and $s\neq r\in[d_{i+1}]$.  
Hence, it suffices to find an $i$ and $k$ for which equation~\eqref{eqni: simplified balanced two} fails.  
Since $I\cup J \neq \overline{\an}_\GG(I\cup J)$ then there exists $i\in[p]$ such that $i+1\notin I\cup J$ but $i+1$ has a child $k$ that is in $I\cup J$.  
Since $k\in I\cup J$, it follows that (as labels in $\LL$)
\[
f^{(I)}(x_k \mid (x_1\cdots x_i,s,y_{i+2}\cdots y_p)_{\pa_\GG(k)}) \neq f^{(J)}(x_k \mid (x^\prime_1\cdots x^\prime_i,s,z^\prime_{i+2}\cdots z^\prime_p)_{\pa_\GG(k)})
\]
for any $s\in[d_{i+1}]$ and any $y_{i+2}\cdots y_p,z^\prime_{i+2}\cdots z^\prime_p\in\RR_{[p]\setminus[i+1]}$ and $x_1\cdots x_i, x^\prime_1\cdots x^\prime_i\in\RR_{[i]}$.  
It follows that any bijection $\Phi$ satisfying equation~(\ref{eqni: simplified balanced two}) would require that 
\[
f^{(I)}(x_k \mid (x_1\cdots x_i,s,y_{i+2}\cdots y_p)_{\pa_\GG(k)}) = f^{(I)}(z_k^\prime\mid (x_1\cdots x_i,r,z_{i+2}^\prime\cdots z_p^\prime)_{\pa_\GG(k)})
\]
for all $s\neq r\in[d_{i+1}]$.   
However, this is impossible since $i+1$ is in the parent set of $k$.  
Hence, no such bijection $\Phi$ exists, and we conclude that $\TT_{(\GG,\I)}$ is not balanced.  
This completes the proof.
\end{proof}

\begin{remark}
\label{rmk: purely interventional}
In the definition of the $\I$-factorization ideal $\FF_{\GG,\I}$ and the $\I$-conditional independence ideal $I_{\GG,\I}$, we implicitly assumed that $\emptyset\in\I$.  
This assumption indicates that these ideals are only defined for interventional DAG models where we have access to an observational distribution $\prob^{(\emptyset)}$. 
In practice, interventional settings are often \emph{purely interventional}, meaning that $\emptyset\notin\I$.  
Notice that the definition of the interventional staged trees, and their associated ideals accommodate purely interventional settings.  
Hence, the equivalence of $(1)$ and $(3)$ in Theorem~\ref{thm: classification of balanced int staged trees} remains valid for purely interventional DAG models. 
In particular, such models are also defined by a toric ideal according to Theorem~\ref{thm: coincidence}.
\end{remark}

\section{Final Remarks}
\label{sec: discussion}
Over the last twenty years, the algebra and geometry of DAG models has been studied extensively \cite{DSS08,GMS06,GSS05,HS14,KRS19}.
In this article we summarized and highlighted the previous contributions in this growing body of work, while also generalizing these contributions to interventional staged tree models and consequently interventional discrete DAG models.
In view of the increasing use of interventional DAG models in numerous algorithms for causal structure learning
\cite{KJSB19,So19,WSYU17,YKU18}, we provide in this section possible directions for future
work from both the statistical and algebraic perspectives.

\subsection{Statistical outlook}
Discrete DAG models for perfect DAGs (i.e.~decomposable models) are desirable from the statistical standpoint because their perfect elimination orderings enable exact inference algorithms to be performed with optimal efficiency \cite{KF09}. 
In \cite[Theorem 3.1]{DS21} it was shown that the more general family of balanced staged tree models specializes to the decomposable models. We used this to give a criterion to identify
balanced interventional DAG models in terms of $\I$-DAGS in Theorem~\ref{thm: classification of balanced int staged trees}. 
It would be interesting to understand if the statistical interpretation of the balanced condition coincides with analogous optimality conditions for inference algorithms in balanced discrete (interventional) staged tree models. 

More specifically, in a given run of variable elimination, a set of factors (typically corresponding to the conditional factors of the DAG) is specified along with an order on the variables to be marginalized out.  When the next variable in the order is eliminated, the product of all factors using that variable is formed and then the variable is summed out over this product.  This operation creates a new function (also called a factor) which is now a function of all variables that were in a factor with the marginalized-out variable.  The set of factors is updated to include this new factor. The complexity of the algorithm is determined by the number of variables used in any factor created in this process. 
When the DAG is perfect and the elimination happens according to a perfect elimination order, the size of these factors never exceeds the size of the largest clique in the skeleton of the DAG \cite[Chapter 9]{KF09}.
A similar complexity bound should hold for the family of balanced staged trees as well as their interventional extensions. 
This bound would generalize the well-studied property of \emph{tree-width} in combinatorics.
The identification of such bounds would provide us with an understanding of when exact inference methods such as variable elimination can be used to efficiently compute marginal and posterior probabilities exactly in staged-tree models, as well as causal effects in interventional staged tree models.

In this work, the main application of the theory of interventional staged tree models was to derive 
algebraic properties for discrete interventional DAG models. 
However, this theory has potential for future statistical applications that should be explored further. 
A more detailed statistical analysis of the interventional staged trees would yield a theory for 
representing and learning causal structure in diverse context-specific settings.  
A theory for learning such context-specific expert systems would likely find numerous applications in 
medical diagnostics and epidemiology \cite{GH96}.

\subsection{Algebraic outlook}
Aside from the potential statistical applications of the ideas discussed in this paper, a number of interesting algebraic questions remain to be explored.  
We established the connection between the ideal of model invariants $I_{\MM(\TT)}$ that defines
a staged tree model, and the predecessor ideals $I_{\pred(\GG,\pi)}$ for 
a fixed linear extension $\pi$.  It would be of interest to see a careful analysis of their structure.  
For instance, what is the relationship between the ideals $I_{\pred(\GG,\pi)}$ and 
$I_{\pred(\GG,\sigma)}$ for distinct linear extensions $\pi$ and $\sigma$ of $\GG$?
What are their primary decompositions? 

In Theorem~\ref{thm: classification of balanced int staged trees} we gave a graphical criterion to determine if an interventional DAG model is defined by a toric
ideal. One of the key features of \cite[Theorem 2.4]{GMS06} when the model is toric, is that the
defining
ideal is generated by binomials corresponding to saturated global conditional independence statements.
Is there a similar statistical interpretation of the binomial equations in the
square-free Gr\"obner basis of $\FF_{\GG,\I}$ in terms of $\I$-Markov properties
for the models in Theorem~\ref{thm: classification of balanced int staged trees}?
We can formulate the same question for balanced interventional staged tree models.
One could then use such an interpretation to study the enumerative properties of the polytope associated to the ideal $\ker(\Psi_{\TT_\A})$.

The algebraic framework established here also holds for other families of parameterized interventional DAG models.
It would be valuable to see the analogous developments in the case of Gaussian interventional DAG models, where much work has already been done for classical DAG models \cite{S08}.  
Finally, one could investigate the implications of a theoretical understanding of the geometry of interventional DAG models on model selection. 
For instance from the point of view in \cite{E20} or \cite{URBY13}.

\bigskip

\noindent
{\bf Acknowledgements}. 
Liam Solus was supported a Starting Grant (No. 2019-05195) from Vetenskapsr\aa{}det (The Swedish Research Council), and by the Wallenberg AI, Autonomous Systems and Software Program (WASP) funded by the Knut and Alice Wallenberg Foundation. Eliana Duarte was supported by the Deutsche Forschungsgemeinschaft DFG under grant 314838170, GRK 2297 MathCoRe, supported by the FCT grant 2020.01933.CEECIND, and partially supported by CMUP under the FCT grant UIDB/00144/2020.



\end{document}